\DeclarePairedDelimiter\floor{\lfloor}{\rfloor}
\newcommand{\para}[1]{\vspace{3mm} \noindent\textbf{#1.}} 
\newtheorem{theorem}{Theorem}[section]
\newtheorem{lemma}[theorem]{Lemma}
\newtheorem{corollary}[theorem]{Corollary}
\newtheorem{question}[theorem]{Question}
\newtheorem{remark}[theorem]{Remark}
\title[UCP and Poincar\'e inequality for higher order fractional Laplacians]{Unique continuation property and Poincar\'e inequality for higher order fractional Laplacians with applications in inverse problems}
\keywords{Inverse problems, unique continuation, fractional Laplacian, fractional Schr\"odinger equation, fractional Poincar\'e inequality, Radon transform.}
\author{Giovanni Covi}
\address{Department of Mathematics and Statistics, University of Jyv\"askyl\"a, Jyv\"askyl\"a, Finland}
\email{giovanni.g.covi@jyu.fi}
\author{Keijo M\"onkk\"onen}
\email{kematamo@student.jyu.fi}
\author{Jesse Railo}
\address{Seminar for Applied Mathematics, Department of Mathematics, ETH Zurich, Z\"urich, Switzerland}
\email{jesse.railo@math.ethz.ch}
\date{\today}
\newcommand{\R}{{\mathbb R}}
\newcommand{\Z}{{\mathbb Z}}
\newcommand{\N}{{\mathbb N}}
\newcommand{\der}{{\mathrm d}}
\newcommand{\riesz}{I_{\alpha}}%Riesz potential
\newcommand{\dplane}{R_d}
\newcommand{\nod}{N_d}%normal operator of d-plane transform
\newcommand{\schwartz}{\mathscr{S}}
\newcommand{\cschwartz}{\mathscr{S}_0}
\newcommand{\tempered}{\mathscr{S}^{\prime}}
\newcommand{\rapidly}{\mathscr{O}_C^{\prime}}
\newcommand{\slowly}{\mathscr{O}_M}
\newcommand{\fraclaplace}{(-\Delta)^s}
\newcommand{\fourier}{\mathcal{F}}
\newcommand{\ifourier}{\mathcal{F}^{-1}}
\newcommand{\csmooth}{\mathcal{D}}%compactly supported smooth function
\newcommand{\smooth}{\mathcal{E}}%smooth function
\newcommand{\cdistr}{\mathcal{E}'}%compactly supported distribution
\newcommand{\distr}{\mathcal{D}^{\prime}}%distribution
\newcommand{\dimens}{n}%dimension
\newcommand{\kernel}{h_{\alpha}} %kernel of the Riesz potential
\newcommand{\abs}[1]{\left\lvert #1 \right\rvert}%absolute value
\newcommand{\aabs}[1]{\left\lVert #1 \right\rVert}%norm
\newcommand{\ip}[2]{\left\langle #1,#2 \right\rangle}%inner product or duality pairing
\DeclareMathOperator{\spt}{spt}%support
\begin{document}

\maketitle
\begin{abstract} We prove a unique continuation property for the fractional Laplacian $(-\Delta)^s$ when $s \in (-n/2,\infty)\setminus \mathbb{Z}$ where $n\geq 1$. In addition, we study Poincar\'e-type inequalities for the operator $(-\Delta)^s$ when $s\geq 0$. We apply the results to show that one can uniquely recover, up to a gauge, electric and magnetic potentials from the Dirichlet-to-Neumann map associated to the higher order fractional magnetic Schr\"odinger equation. We also study the higher order fractional Schr\"odinger equation with singular electric potential. In both cases, we obtain a Runge approximation property for the equation. Furthermore, we prove a uniqueness result for a partial data problem of the $d$-plane Radon transform in low regularity. Our work extends some recent results in inverse problems for more general operators.
\end{abstract}

\section{Introduction}

The fractional Laplacian~$\fraclaplace$, $s\in (-n/2,\infty)\setminus \Z$, is a non-local operator by definition and thus differs substantially from the ordinary Laplacian~$(-\Delta)$. The non-local behaviour can be exploited when solving fractional inverse problems. In section \ref{subsec:uniquecontinuationresults}, we prove that ~$\fraclaplace$ admits a unique continuation property (UCP) for open sets, that is, if~$u$ and~$\fraclaplace u$ both vanish in a nonempty open set, then~$u$ vanishes everywhere. Clearly this property cannot hold for local operators. We give many other versions of UCPs as well.

We have also included a quite comprehensive discussion of the Poincar\'e inequality for the higher order fractional Laplacian~$\fraclaplace$, $s \geq 0$, in section~\ref{subsec:poinacareinequality}. We give many proofs for the higher order fractional Poincar\'e inequality based on various different methods in the literature. The higher order fractional Poincar\'e inequality appears earlier at least in \cite{XI-note-on-fractional-poincare} for functions in $C_c^\infty(\Omega)$ where $\Omega$ is a bounded Lipschitz domain. Also similar inequalities are proved in the book~\cite{BCD-fourier-analysis-nonlinear-pde} for homogeneous Sobolev norms but without referring to the fractional Laplacian. However, we have extended some known results, given alternative proofs, and studied a connection between the fractional and the classical Poincar\'e constants. We believe that section~\ref{subsec:poinacareinequality} will serve as a helpful reference on fractional Poincar\'e inequalities in the future.

Our main applications are fractional Schr\"odinger equations with and without a magnetic potential, and the $d$-plane Radon transforms with partial data. We apply the UCP result and the Poincar\'e inequality for higher order fractional Laplacians to show uniqueness for the associated fractional Schr\"odinger equation and the Runge approximation properties. UCPs have also applications in integral geometry since certain partial data inverse problems for the Radon transforms can be reduced to unique continuation problems of the normal operators. We remark that the normal operators of the Radon transforms are negative order fractional Laplacians (Riesz potentials) up to constant coefficients.

In this section, we introduce our models, discuss some related results and present our main theorems and corollaries. We start with the classical Calder\'on problem as a motivation.

\subsection{The Calder\'on problem} We will study a non-local version of the famous Calder\'on problem called the fractional Calder\'on problem.  A survey of the fractional Calder\'on problem is given in \cite{Sal17}. The Calder\'on problem is a classical inverse problem where one wants to determine the electrical conductivity on some sufficiently smooth domain by boundary measurements \cite{SA:calderon-problem, UH-inverse-problems-seeing-the-unseen}. Suppose that $\Omega \subset \R^n$ is a domain with regular enough boundary $\partial\Omega$. The electrical conductivity is usually represented as a bounded positive function $\gamma$, and the conductivity equation is
\begin{align}\label{eq:conductivityequation} \left\{\begin{array}{rl}
        \nabla\cdot(\gamma\nabla u)&=0 \;\;\text{in}\;  \Omega\\
        u|_{\partial\Omega} &=f
        \end{array}\right.\end{align}
where $f$ is the potential on the boundary $\partial\Omega$ and $u$ is the induced potential in $\Omega$. The data in this problem is the Dirichlet-to-Neumann (DN) map $\Lambda_{\gamma}(f)=(\gamma\partial_{\nu}u)|_{\partial\Omega}$, where $\nu$ is the outer unit normal on the boundary. The DN map basically tells how the applied voltage on the boundary induces normal currents on the boundary by the electrical properties of the interior. The inverse problem is to determine $\gamma$ from the DN map $\Lambda_{\gamma}$. One of the associated basic questions is the uniqueness problem, that is, whether $\gamma_1=\gamma_2$ follows from $\Lambda_{\gamma_1}=\Lambda_{\gamma_2}$. 

Equation~\eqref{eq:conductivityequation} can be reduced to a Schr\"odinger equation
\begin{align}\label{eq:scrodingercalderon} \left\{\begin{array}{rl}
        (-\Delta+q)u&=0  \;\;\text{in}\;  \Omega\\
        u|_{\partial\Omega} &=f
        \end{array}\right.\end{align}
where $q=(\Delta\sqrt{\gamma})/\sqrt{\gamma}$ now represents the electric potential in $\Omega$. One typically assumes that $0$ is not a Dirichlet eigenvalue of the operator $(-\Delta+q)$ to obtain unique solutions to equation~\eqref{eq:scrodingercalderon}. The inverse problem then is to know whether one can determine the electric potential $q$ uniquely from the DN map $\Lambda_q$, which can be expressed in terms of the normal derivative $\Lambda_q f=\partial_{\nu}u|_{\partial\Omega}$ for regular enough boundaries. For more details on the classical Calder\'on problem and its applications to medical, seismic and industrial imaging, see \cite{SA:calderon-problem, UH-inverse-problems-seeing-the-unseen}.

\subsection{Fractional Schr\"odinger equation}
In this article, we focus on the fractional Schr\"odinger equation and its generalization, the fractional magnetic Schr\"odinger equation. The main difference between the classical and fractional Schr\"odinger operators is that the first one is local and the second one is non-local. This can be seen since the Laplacian $(-\Delta)$ is local as a differential operator while the fractional counterpart $\fraclaplace$, $s\in\R^+\setminus\Z$, is a non-local Fourier integral operator. In other words, the value $\fraclaplace u(x)$, $s \in \R^+\setminus\Z$, depends on the values of $u$ everywhere, not just in a small neighbourhood of $x \in \R^n$. Fractional Laplacians have a close connection to Lev\'y processes and have been used in many areas of mathematics and physics, for example to model anomalous and nonlocal diffusion, and also in the formulation of fractional quantum  mechanics where the fractional Schr\"odinger equation arises naturally as a generalization of the ordinary Schr\"odinger equation~\cite{VMRTM-nonlocal-diffusion-problems, BV-nonlocal-diffusion-applications, DGLZ2012, GSU-calderon-problem-fractional-schrodinger, La00, LA-fractional-quantum-mechanics, KM-random-walks-quide-anomalous-diffusion, RO-nonlocal-elliptic-equations-bounded-domains}.

Since the fractional Laplacian is a non-local operator, it is more natural to fix exterior values for the solutions of the equation instead of just boundary values. This motivates the study of the following exterior value problem, first introduced in~\cite{GSU-calderon-problem-fractional-schrodinger},
\begin{align}\label{eq:fractionalschrodingerequation} \left\{\begin{array}{rl}
        (\fraclaplace+q)u&=0  \;\;\text{in}\;  \Omega\\
        u|_{\Omega_e} &=f
        \end{array}\right.\end{align}
where $\Omega_e=\R^\dimens\setminus\overline{\Omega}$ is the exterior of $\Omega$. The associated DN map for equation~\eqref{eq:fractionalschrodingerequation} is a bounded linear operator $\Lambda_q\colon H^s(\Omega_e)\rightarrow (H^s(\Omega_e))^*$ which, under stronger assumptions, has an expression $\Lambda_q f=\fraclaplace u|_{\Omega_e}$ \cite{GSU-calderon-problem-fractional-schrodinger}. We assume that the potential $q$ is such that the following holds:
\begin{equation}
\label{def:zeronotdirichleteigenvalue}
\text{If} \ u\in H^s(\R^\dimens) \ \text{solves} \ (\fraclaplace+q)u=0 \ \text{in} \ \Omega \ \text{and} \ u|_{\Omega_e}=0, \ \text{then} \ u=0.
\end{equation}
In other words, condition~\eqref{def:zeronotdirichleteigenvalue} requires that $0$ is not a Dirichlet eigenvalue of the operator $(\fraclaplace+q)$.

In section \ref{sec:schrodingerequation}, we will prove that, under certain assumptions, one can uniquely determine the potential $q$ in equation~\eqref{eq:fractionalschrodingerequation} from exterior measurements when $s\in\R^+\setminus\Z$, and we also prove a Runge approximation property for equation~\eqref{eq:fractionalschrodingerequation} (see also section \ref{subsec:mainresults}). These generalize the results in \cite{GSU-calderon-problem-fractional-schrodinger, RS-fractional-calderon-low-regularity-stability} to higher fractional powers of $s$. The proofs basically reduce to the fact that the operator $\fraclaplace$ has the following UCP: if $\fraclaplace u|_V=0$ and $u|_V=0$ for some nonempty open set $V\subset\R^\dimens$, then $u=0$ everywhere. This reflects the fact that $\fraclaplace$ is a non-local operator since such UCP can never hold for local operators.

Unique continuation of the fractional Laplacian has been extensively studied and used to show uniqueness results for fractional Schr\"odinger equations \cite{CO-magnetic-fractional-schrodinger, GRSU-fractional-calderon-single-measurement, GSU-calderon-problem-fractional-schrodinger, RS-fractional-calderon-low-regularity-stability}. One version was already proved by Riesz~\cite{GSU-calderon-problem-fractional-schrodinger, RI-liouville-riemann-integrals-potentials} and similar methods were used in~\cite{IM-unique-continuation-riesz-potential} to show a UCP of Riesz potentials~$\riesz$ which can be seen as fractional Laplacians with negative exponents. See also~\cite{KR-all-functions-are-s-harmonic} for a unique continuation result of Riesz potentials. UCP of~$\fraclaplace$ for functions in $H^r(\R^\dimens)$, $r\in\R$, was proved in  \cite{GSU-calderon-problem-fractional-schrodinger}  when $s\in (0, 1)$. The proof is based on Carleman estimates from \cite{RU-unique-continuation-scrodinger-rough-potentials} and on Caffarelli-Silvestre extension \cite{CS-nonlinera-equations-fractional-laplacians,CS-extension-problem-fractional-laplacian}. Using the known result for $s\in (0, 1)$, we provide an elementary proof which generalizes the UCP for all $s\in (-\dimens/2, \infty)\setminus\Z$. With the same trick we obtain several other unique continuation results. There are also strong unique continuation results for $s\in (0, 1)$ if one assumes more regularity from the function \cite{FF-unique-continuation-fractional-ellliptic-equations, RU-unique-continuation-scrodinger-rough-potentials}. In the strong UCP, one replaces the condition $u|_V=0$ by the requirement that $u$ vanishes to infinite order at some point $x_0\in V$. The higher order case $s\in\R^+\setminus (\Z\cup (0, 1))$ has been studied recently by several authors \cite{FF-unique-continuation-higher-laplacian, GR-fractional-laplacian-strong-unique-continuation, YA-higher-order-laplacian}. These results however assume some special conditions on the function~$u$, i.e. they require that $u$ is in a Sobolev space which depends on the power~$s$ of the fractional Laplacian $\fraclaplace$. We only require that $u$ is in some Sobolev space $H^r(\R^\dimens)$ where~$r\in\R$ can be an arbitrarily small (negative) number. 

See also \cite{KR-all-functions-are-s-harmonic} where the author proves a higher order Runge approximation property by $s$-harmonic functions in the unit ball when $s\in\R^+\setminus\Z$ (compare to theorem \ref{thm:schrodingerrungeapproximation}). Here $s$-harmonicity simply means that $\fraclaplace u=0$ in some domain $\Omega$. The $s$-harmonic approximation in the case $s\in (0, 1)$ was already studied in \cite{DSV-all-functions-are-s-harmonic}; similar higher regularity approximation results are proved in \cite{CLR18, GSU-calderon-problem-fractional-schrodinger} for the fractional Schr\"odinger equation.

\subsection{Fractional magnetic Schr\"odinger equation}
Section \ref{sec:magneticschrodinger} of this paper extends the study of the fractional magnetic Schr\"odinger equation (FMSE) begun in \cite{CO-magnetic-fractional-schrodinger}, expanding the uniqueness result for the related inverse problem to the cases when $s\in \mathbb R^+ \setminus \mathbb Z$. The direct problem for the classical magnetic Schr\"odinger equation (MSE) consists in finding a function $u$ satisfying  
\begin{align*} \left\{\begin{array}{rl}
        (-\Delta)_A u + qu &= -\Delta u -i \nabla\cdot(Au) -i A\cdot\nabla u + (|A|^2+q)u =0  \;\;\text{in}\;  \Omega\\
        u|_{\partial \Omega}&=f
        \end{array}\right.\end{align*}
\noindent where $\Omega\subset\mathbb R^n$ is some bounded open set with Lipschitz boundary representing a medium, $f$ is the boundary value for the solution $u$, and $A,q$ are the vector and scalar potentials of the equation. In the associated inverse problem, we are given measurements on the boundary in the form of a DN map $\Lambda_{A,q} : H^{1/2}(\partial\Omega)\rightarrow H^{-1/2}(\partial\Omega)$, and we are asked to recover $A, q$ in $\Omega$ using this information. It was shown in \cite{NSU95} that this is only possible up to a natural gauge: one can uniquely determine the potential $q$ and the magnetic \emph{field} $\text{curl} A$, but the magnetic \emph{potential} $A$ can not be determined in greater detail. The inverse problem for MSE is of great interest, because it generalizes the non-magnetic case by adding some first order terms, and shows a quite different behavior. It also possesses multiple applications in the sciences: the papers \cite{NSU95, NU94, Mc00, Es01, NT00} and \cite{HLW06} give some examples of this, treating the inverse scattering problem with a fixed energy, isotropic elasticity, the Maxwell, Schr\"odinger and Dirac equations and the Stokes system. We refer to the survey \cite{Sa07} for many more references on inverse boundary value problems related to MSE. 

We are interested in the study of a high order fractional version of the MSE. There have been many studies in this direction (see for instance \cite{LI-fractional-magnetic, LI-fractional-magnetic-2, LI-fractional-magnetic-3}). In our work, we will build upon the results from \cite{CO-magnetic-fractional-schrodinger} and generalize them to higher order. Thus, for us the direct problem for FMSE asks to find a function $u$ which satisfies 
\begin{align*} \left\{\begin{array}{rl}
        (-\Delta)^s_A u + qu &=0  \;\;\text{in}\;  \Omega\\
        u|_{\Omega_e} &=f
        \end{array}\right.\end{align*}
\noindent where $\Omega$, $f$, $A$ and $q$ play a similar role as in the local case, $s\in \mathbb R^+ \setminus\mathbb Z$ and $(-\Delta)^s_A$ is the magnetic fractional Laplacian. This is a fractional version of $(-i\nabla+A)\cdot(-i\nabla+A)$, the magnetic Laplacian from which MSE arises. In section \ref{sec:magneticschrodinger}, we will construct the fractional magnetic Laplacian based on the fractional gradient operator $\nabla^s$. The fractional gradient is based on the framework laid down in \cite{DGLZ2012,DGLZ2013}, and has been studied in the papers \cite{Co18,CO-magnetic-fractional-schrodinger}. One should keep in mind that for $s>1$ the fractional gradient is a tensor of order $\floor{s}$ rather than a vector. In the corresponding inverse problem, we assume to know the DN map $\Lambda_{A,q}^s : H^s(\Omega_e) \rightarrow (H^{s}(\Omega_e))^*$, and we wish to recover $A, q$ in $\Omega$. In the cases when $s\in(0,1)$, it has been shown that the pair $A,q$ can only be recovered up to a natural gauge \cite{CO-magnetic-fractional-schrodinger}. We generalize this result to the case $s\in \mathbb R^+\setminus \mathbb Z$. This is achieved by first proving a weak UCP and the Runge approximation property for FMSE, and then testing the Alessandrini identity for the equation with suitably chosen functions.

\begin{remark}
The case of the high order magnetic Schr\"odinger equation, that is the one in which $s\in\mathbb N$, $s\neq 1$, is still open at the time of writing to the best of the authors' knowledge. Our methods are purely nonlocal, and thus cannot be applied to the integer case. It was however showed in \cite{NSU95}, as cited above, that a uniqueness result up to a natural gauge holds when $s=1$.
\end{remark}

\subsection{Radon transforms and region of interest tomography}
Unique continuation results have also applications in integral geometry. It was proved in~\cite{IM-unique-continuation-riesz-potential} that the normal operator of the X-ray transform admits a UCP in the class of compactly supported distributions. This was done by considering the normal operator as a Riesz potential. We generalize the result for the normal operator of the $d$-plane transform~$\dplane$ where $d\in\N$ is odd such that $0<d<\dimens$. In the case $d=1$ the transform $\dplane$ corresponds to the X-ray transform and in the case $d=\dimens-1$ to the Radon transform. The UCP of the normal operator $\nod=\dplane^*\dplane$ implies uniqueness for the following partial data problem: if $f$ integrates to zero over all $d$-planes which intersect some nonempty open set $V$ and $f|_V=0$, then $f=0$. This can be seen as a complementary result to the Helgason support theorem for the $d$-plane transform \cite{HE:integral-geometry-radon-transforms}. Helgason's theorem says that if $f$ integrates to zero over all $d$-planes not intersecting a convex and compact set $K$ and $f|_K=0$, then $f=0$. The $d$-plane transform~$\dplane$ is injective on continuous functions which decay rapidly enough at infinity and also on compactly supported distributions~\cite{HE:integral-geometry-radon-transforms}. The $d$-plane transform has been recently studied in the periodic case on the flat torus~\cite{A11, I15, RA-periodic-radon-transform} but also in other settings~\cite{DE03, HH17, R15}. Weighted and limited data Radon transforms ($d = n-1$) have been studied recently for example in~\cite{FQ16, G17, GN18b, GN18a}.

When $d=1$, partial data problems as discussed above arise for example in seismology and medical imaging. In \cite{IM-unique-continuation-riesz-potential}, it is explained how one can use shear wave splitting data to uniquely determine the difference of the anisotropic perturbations in the S-wave speeds, and also how one can use local measurements of travel times of seismic waves to uniquely determine the conformal factor in the linearization. Both of these problems reduce to the following partial data result: if $f$ integrates to zero over all lines which intersect some nonempty open set $V$ and  $f|_V=0$, then $f=0$. In medical imaging, one typically wants to reconstruct a specific part of the human body. Can this be done by using only X-rays which go though our region of interest (ROI)? Generally this is not possible even for $C_c^{\infty}$-functions \cite{KEQ-wavelet-methods-ROI-tomography, NA-mathematics-computerized-tomography, SU:microlocal-analysis-integral-geometry}, but if we know some information of~$f$ in the ROI, then the reconstruction can be done. For example, if the function $f$ is piecewice constant, piecewice polynomial or analytic in the ROI, then $f$ can be uniquely determined from the X-ray data~\cite{KKW-stability-of-interior-problems, KEQ-wavelet-methods-ROI-tomography, YYJW-high-order-TV-minimization}. Also, if we know the X-ray data through the ROI and the values of $f$ in an arbitrarily small open set inside the ROI, then $f$ is uniquely determined everywhere~\cite{CNDK-solving-interior-problem-ct-with-apriori-knowledge, IM-unique-continuation-riesz-potential}. For practical applications of ROI tomography in medical imaging, see for example~\cite{YYW-interior-reconstruction-limited-angle-data, YW-compressed-interior-tomography}. See also \cite{KQ-microlocal-analysis-in-tomography, QU-singularities-x-ray-transform-limited-data,QU-artifacts-and-singularities-limited-tomography} for a discussion of the difficulties of obtaining stable reconstruction in partial data problems for the X-ray transform (visible and invisible singularities).

\subsection{Main results} 
\label{subsec:mainresults}
We briefly introduce the basic notation; more details can be found in sections \ref{sec:preliminaries}, \ref{sec:applicationstointegralgeometry}, \ref{sec:schrodingerequation} and \ref{sec:magneticschrodinger}. Let $H^r(\R^\dimens)$ be the $L^2$ Sobolev space of order $r\in\R$ and $\widetilde{H}^r(\Omega)$ the closure of $C^{\infty}_c(\Omega)$ in $H^r(\R^\dimens)$ when $\Omega$ is an open set. The $L^1$ Bessel potential space is denoted by $H^{r, 1}(\R^\dimens)$. We define $H_K^r(\R^\dimens)\subset H^r(\R^\dimens)$ to be those Sobolev functions which have support in the compact set~$K$. The fractional Laplacian is defined via the Fourier transform $\fraclaplace u=\ifourier(\abs{\cdot}^{2s}\hat{u})$. Then $\fraclaplace\colon H^r(\R^\dimens)\rightarrow H^{r-2s}(\R^\dimens)$ is a continuous operator when $s\in\R^+\setminus\Z$. The $d$-plane transform~$\dplane$ takes a function which decreases rapidly enough at infinity and integrates it over $d$-dimensional planes where $0<d<\dimens$. The normal operator of the $d$-plane transform is defined as $\nod=\dplane^*\dplane$ where~$\dplane^*$ is the adjoint operator. Further, we denote by $\distr(\R^\dimens)$ the space of all distributions, $\cdistr(\R^\dimens)$ the space of compactly supported  distributions, $\rapidly(\R^\dimens)$ the space of rapidly decreasing distributions and $C_{\infty}(\R^\dimens)$ the set of rapidly decreasing continuous functions. The space of singular potentials~$Z_0^{-s}(\R^\dimens)$ is a certain subset of distributions~$\distr(\R^\dimens)$ and can be interpreted as a set of bounded multipliers from $H^s(\R^\dimens)$ to $H^{-s}(\R^\dimens)$.

The following theorem extends a result in \cite{GSU-calderon-problem-fractional-schrodinger} and has a central role in this article. We call it the UCP of the operator $\fraclaplace$.

\begin{theorem}
\label{thm:uniquecontinuationoffractionallaplacian}
Let $\dimens \geq 1$, $s\in (-n/4,\infty)\setminus \Z$ and $u\in H^{r}(\R^\dimens)$ where $r\in\R$. If $(-\Delta)^s u|_V=0$ and $u|_V=0$ for some nonempty open set $V\subset\R^\dimens$, then $u=0$. The claim holds also for $s\in (-n/2, -n/4]\setminus\Z$ if $u\in H^{r, 1}(\R^\dimens)$ or $u\in\rapidly(\R^\dimens)$.
\end{theorem}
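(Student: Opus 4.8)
The plan is to reduce everything to the known unique continuation property of $\fraclaplace$ for $s\in(0,1)$ from \cite{GSU-calderon-problem-fractional-schrodinger}, exploiting that integer powers of $-\Delta$ are \emph{local} differential operators and hence do not enlarge supports. The first thing I would do is settle the well-definedness of $\fraclaplace u=\ifourier(\abs{\cdot}^{2s}\hat u)$, which is the only place the regularity dichotomy in the statement enters. For $s\geq0$ the multiplier $\abs{\xi}^{2s}$ is locally bounded, so $\fraclaplace u$ is automatically a tempered distribution for every $u\in H^r(\R^\dimens)$. For $s<0$ one needs $\abs{\xi}^{2s}\hat u$ to be locally integrable near the origin: if $u\in H^r(\R^\dimens)$ then $\hat u\in L^2_{\mathrm{loc}}$ and Cauchy--Schwarz gives this precisely when $4s>-\dimens$, i.e. $s>-\dimens/4$; if instead $u\in H^{r,1}(\R^\dimens)$ or $u\in\rapidly(\R^\dimens)$ then $\hat u$ is (essentially) bounded near the origin, so the weaker condition $2s>-\dimens$ suffices. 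This accounts for the two ranges of $s$, and from here the argument is uniform.

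Next, the case $s>0$, $s\notin\Z$. I would write $m=\floor{s}\in\{0,1,2,\dots\}$ and $\sigma=s-m\in(0,1)$, and set $v=(-\Delta)^m u\in H^{r-2m}(\R^\dimens)$. Since $(-\Delta)^m$ is a differential operator and $u$ vanishes on the open set $V$, the distribution $v$ also vanishes on $V$; moreover $\abs{\xi}^{2s}=\abs{\xi}^{2\sigma}\abs{\xi}^{2m}$ on the Fourier side yields $(-\Delta)^\sigma v=\fraclaplace u$, so $(-\Delta)^\sigma v|_V=0$ as well. Applying the $s\in(0,1)$ UCP to $v$ gives $v=0$, i.e. $\abs{\xi}^{2m}\hat u=0$; since $\abs{\xi}^{2m}$ is smooth and nonvanishing on $\R^\dimens\setminus\{0\}$, this forces $\spt\hat u\subseteq\{0\}$, so $u$ is a polynomial, and the only polynomial lying in $H^r(\R^\dimens)$ is $0$.

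Finally, the case $s<0$, $s\notin\Z$, so that $s\in(-\dimens/2,0)$. Here I would put $k=-\floor{s}\geq1$, so $s+k\in(0,1)$, and let $w=\fraclaplace u$, which vanishes on $V$ by hypothesis. On the Fourier side $\abs{\xi}^{2(s+k)}\hat u=\abs{\xi}^{2k}\bigl(\abs{\xi}^{2s}\hat u\bigr)$, that is $(-\Delta)^{s+k}u=(-\Delta)^k w$; since $\abs{\xi}^{2k}$ is a polynomial this composition is legitimate, and since $(-\Delta)^k$ is a differential operator and $w|_V=0$ we get $(-\Delta)^{s+k}u|_V=0$. Now $s+k\in(0,1)$, $u\in H^r(\R^\dimens)$ and $u|_V=0$, so the $s\in(0,1)$ UCP applies directly and gives $u=0$.

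The only genuinely delicate point is the bookkeeping in the first paragraph: verifying that the stated hypotheses make $\fraclaplace u$ a tempered distribution, and that the multiplier identities $\abs{\xi}^{2a}\abs{\xi}^{2b}\hat u=\abs{\xi}^{2(a+b)}\hat u$ used in the two reductions are valid as identities in $\tempered(\R^\dimens)$ (this uses that integer powers of $\abs{\xi}^2$ are polynomials, which multiply tempered distributions, together with the local integrability discussed above). Everything after that is soft: locality of $(-\Delta)^m$, the structure of distributions supported at a point, and a single invocation of the known UCP for $s\in(0,1)$.
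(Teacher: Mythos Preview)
Your proof is correct and follows the same reduction strategy as the paper: peel off integer powers of $-\Delta$, use their locality to transfer the vanishing on $V$, and invoke the known UCP for $s\in(0,1)$ from \cite{GSU-calderon-problem-fractional-schrodinger}. The well-definedness discussion and the $s<0$ case match the paper essentially line for line.

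The one genuine difference is in the final step of the $s>0$ case. After obtaining $(-\Delta)^m u=0$, the paper appeals to a separate lemma on polyharmonic distributions (if $(-\Delta)^m u=0$ and $u|_V=0$ then $u=0$, proved by induction using analyticity of harmonic distributions). You instead argue on the Fourier side: $\abs{\xi}^{2m}\hat u=0$ forces $\spt\hat u\subset\{0\}$, so $u$ is a polynomial, and no nonzero polynomial lies in $H^r(\R^\dimens)$. Your route is slightly more self-contained here, since it avoids the polyharmonic lemma and the analyticity input, at the cost of using the specific ambient space $H^r$ (the paper's lemma works for arbitrary distributions). Either way the argument is short, so this is a matter of taste rather than a substantive divergence.
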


Theorem \ref{thm:uniquecontinuationoffractionallaplacian} is proved in section \ref{subsec:uniquecontinuationresults}. The UCP of~$\fraclaplace$ implies corresponding UCP for Riesz potentials (see corollary \ref{cor:uniquecontinuationofriespotential} and \cite[Theorem 5.2]{IM-unique-continuation-riesz-potential}). This in turn implies the following UCP for the normal operator of the $d$-plane transform~$\nod$ when~$d$ is odd; the case $d=1$ was already studied in~\cite{IM-unique-continuation-riesz-potential}.

\begin{corollary}
\label{cor:uniquecontinuationnormaloperator}
Let $\dimens \geq 2$ and let $f$ belong to either $\cdistr(\R^\dimens)$ or $ C_{\infty}(\R^\dimens)$. Let $d \in \N$ be odd such that $0<d<\dimens$. If $\nod f|_V=0$ and $f|_V=0$ for some nonempty open set $V\subset\R^\dimens$, then $f=0$.
\end{corollary}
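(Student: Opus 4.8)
The plan is to reduce the unique continuation statement for the normal operator $\nod = \dplane^*\dplane$ to the UCP of a negative-order fractional Laplacian, which in turn follows from Theorem \ref{thm:uniquecontinuationoffractionallaplacian} via the corollary for Riesz potentials (Corollary \ref{cor:uniquecontinuationofriespotential}). The key classical fact is that the normal operator of the $d$-plane transform acts on the Fourier side as a Fourier multiplier: there is a constant $c_{n,d}>0$ with $\widehat{\nod f}(\xi) = c_{n,d}\,\abs{\xi}^{-d}\,\hat f(\xi)$, so that $\nod = c_{n,d}\,\riesz$ with $\alpha = d$, i.e. $\nod$ is (up to the constant) the Riesz potential $I_d$, equivalently the fractional Laplacian $(-\Delta)^{-d/2}$. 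Here is where oddness of $d$ enters in the form we need: $-d/2 \in (-n/2,\infty)\setminus\Z$ exactly when $d$ is odd and $0<d<n$ (an even $d$ would put $-d/2$ in $\Z$, outside the allowed range), so the hypothesis of Theorem \ref{thm:uniquecontinuationoffractionallaplacian} is met with $s=-d/2$.

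First I would fix $f$ in $\cdistr(\R^\dimens)$ or $C_\infty(\R^\dimens)$ and record that in either case $f$ lies in some Sobolev space $H^r(\R^\dimens)$: a compactly supported distribution is in $H^r$ for $r$ sufficiently negative, and a rapidly decreasing continuous function is in $L^2 = H^0$. Since $-d/2 \le -1/2 < -n/4$ is not automatic (for $n=2,3$ we may have $-d/2 \ge -n/4$), I should be a little careful: for $n\geq 2$ and $d$ odd with $0<d<n$ one checks $-d/2 > -n/2$ always, and when $-d/2 \in (-n/2,-n/4]$ the refined hypothesis of Theorem \ref{thm:uniquecontinuationoffractionallaplacian} requires $f\in H^{r,1}(\R^\dimens)$ or $f\in\rapidly(\R^\dimens)$; a compactly supported distribution lies in $\rapidly(\R^\dimens)$, and a function in $C_\infty(\R^\dimens)$ — by which the paper means rapidly decreasing continuous functions — also lies in $\rapidly(\R^\dimens)$ (or one may argue directly that $L^1\cap L^2$ suffices, giving $H^{0,1}$). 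So in all the relevant cases the appropriate hypothesis of Theorem \ref{thm:uniquecontinuationoffractionallaplacian}, applied through Corollary \ref{cor:uniquecontinuationofriespotential}, is satisfied.

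With this in place the argument is immediate: the hypotheses $\nod f|_V = 0$ and $f|_V = 0$ translate, via $\nod = c_{n,d}\,\riesz$ with $\alpha=d$, into $\riesz f|_V = 0$ and $f|_V = 0$, and Corollary \ref{cor:uniquecontinuationofriespotential} (the UCP for Riesz potentials, itself a consequence of Theorem \ref{thm:uniquecontinuationoffractionallaplacian}) then yields $f=0$. I expect the only genuine point requiring care — the ``main obstacle'' — to be the bookkeeping around function-space hypotheses: verifying that $\nod f$ is well defined as a tempered distribution for $f$ in the stated classes (so that the Fourier-multiplier identity makes sense, noting $\abs{\xi}^{-d}$ is locally integrable near the origin precisely because $d<n$), that the symbol identity $\widehat{\nod f} = c_{n,d}\abs{\xi}^{-d}\hat f$ holds in $\tempered(\R^\dimens)$, and that $f$ indeed lands in whichever Sobolev/Bessel-potential space is demanded by Theorem \ref{thm:uniquecontinuationoffractionallaplacian} in the borderline range $s=-d/2\in(-n/2,-n/4]$. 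Once these identifications are pinned down, no further analysis is needed.
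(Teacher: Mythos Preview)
Your approach is correct and is essentially the paper's own argument: identify $\nod$ with a Riesz potential / negative-order fractional Laplacian and invoke the UCP of Corollary~\ref{cor:uniquecontinuationofriespotential} (equivalently Theorem~\ref{thm:uniquecontinuationoffractionallaplacian} with $s=-d/2$); your care about the borderline range $s\in(-n/2,-n/4]$ and the inclusions $\cdistr(\R^\dimens),\,C_\infty(\R^\dimens)\subset\rapidly(\R^\dimens)$ is exactly what is needed. One small slip to fix: with the paper's convention $I_\alpha f = f*\abs{\cdot}^{-\alpha}$, the Fourier multiplier of $I_\alpha$ is proportional to $\abs{\xi}^{\alpha-n}$, so $\nod$ (with multiplier $\abs{\xi}^{-d}$) is $I_{n-d}$, not $I_d$; this does not affect your argument since you correctly apply Theorem~\ref{thm:uniquecontinuationoffractionallaplacian} at $s=-d/2$, but the index should be corrected.
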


From the UCP of~$\nod$ we obtain the next result which is in a sense complementary to the Helgason support theorem for the $d$-plane transform \cite[Theorem 6.1]{HE:integral-geometry-radon-transforms}. It extends a result in~\cite{IM-unique-continuation-riesz-potential} where the authors prove a similar uniqueness property for the X-ray transform.

\begin{corollary}
\label{cor:partialdataresult}
Let $\dimens \geq 2$, $V\subset\R^\dimens$ a nonempty open set and $f\in C_{\infty}(\R^\dimens)$. Let $d\in\N$ be odd such that $0<d<\dimens$. If $f|_V=0$ and $\dplane f=0$ for all $d$-planes intersecting $V$, then $f=0$. The claim holds also for $f\in\cdistr(\R^\dimens)$ when the assumption $\dplane f=0$ for all $d$-planes intersecting $V$ is understood in the sense of distributions. 
\end{corollary}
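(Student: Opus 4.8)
The plan is to reduce the partial data statement about the $d$-plane transform $\dplane$ to the unique continuation property of the normal operator $\nod = \dplane^*\dplane$ established in Corollary \ref{cor:uniquecontinuationnormaloperator}. The key observation is that $\nod f$ at a point $x$ is an integral of $\dplane f$ over all $d$-planes through $x$ (against a suitable positive weight coming from the measure on the affine Grassmannian). Hence, if $\dplane f = 0$ for every $d$-plane meeting the open set $V$, then in particular every $d$-plane through a given point $x \in V$ carries zero integral of $f$, so $\nod f|_V = 0$. Combined with the hypothesis $f|_V = 0$, Corollary \ref{cor:uniquecontinuationnormaloperator} immediately yields $f = 0$ — provided $f$ lies in one of the admissible classes $\cschwartz$-dual spaces there, namely $\cdistr(\R^\dimens)$ or $C_\infty(\R^\dimens)$, which is exactly the standing hypothesis here.

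First I would record the pointwise formula $\nod f(x) = c_{n,d}\int_{G} \dplane f(\text{$d$-plane through $x$ in direction $\xi$})\, d\xi$, or more precisely the fact that $\nod$ factors through $\dplane$, so that the vanishing of $\dplane f$ on all $d$-planes intersecting $V$ forces $\nod f$ to vanish on $V$. For the continuous case $f \in C_\infty(\R^\dimens)$ this is a classical and elementary computation once one writes $\dplane^*$ explicitly; for $f \in \cdistr(\R^\dimens)$ one needs to interpret $\dplane f = 0$ for all $d$-planes meeting $V$ in the distributional sense (as the statement already flags), and then check that $\nod f$, which is well-defined as a distribution since $\dplane$ and $\dplane^*$ act continuously on the relevant spaces (see the discussion preceding Corollary \ref{cor:uniquecontinuationnormaloperator}), restricts to $0$ on $V$. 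Second, I would invoke Corollary \ref{cor:uniquecontinuationnormaloperator} with the same open set $V$ and the hypotheses $\nod f|_V = 0$, $f|_V = 0$, concluding $f = 0$.

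The only real subtlety is the bookkeeping in the distributional case: one must make sure that ``$\dplane f = 0$ for all $d$-planes intersecting $V$'' can be legitimately integrated against the Grassmannian measure to give ``$\nod f = 0$ on $V$'' without the integral picking up contributions from $d$-planes that miss $V$. This is handled by the geometric fact that every $d$-plane passing through a point of $V$ does intersect $V$ (being an open neighborhood of that point), so the restriction of $\nod f$ to $V$ only ever sees the vanishing data. I expect this localization argument to be the main point requiring care, but it is essentially soft once the duality pairing defining $\nod$ on $\cdistr(\R^\dimens)$ is set up, as it is in the earlier sections; everything else is a direct appeal to Corollary \ref{cor:uniquecontinuationnormaloperator}.
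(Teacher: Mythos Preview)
Your approach for $f\in C_{\infty}(\R^\dimens)$ is exactly the paper's: pass from the vanishing of $\dplane f$ on $d$-planes meeting $V$ to $\nod f|_V=0$ via the formula $\nod=\dplane^*\dplane$, then invoke Corollary~\ref{cor:uniquecontinuationnormaloperator}.

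For $f\in\cdistr(\R^\dimens)$ the two arguments diverge. The paper does \emph{not} argue by duality as you propose; instead it mollifies. One reduces to $V=B(0,R)$, sets $f_\epsilon=f\ast j_\epsilon\in C_c^\infty(\R^\dimens)$, and checks via the identity $\dplane^*(j_\epsilon\times\varphi)=j_\epsilon\ast\dplane^*\varphi$ that the distributional hypothesis on $\dplane f$ forces $\dplane f_\epsilon=0$ on all $d$-planes meeting a slightly smaller ball $B(0,r)$ (with $r+\epsilon<R$). Since also $f_\epsilon|_{B(0,r)}=0$, the continuous case applies to each $f_\epsilon$, and one passes to the limit $\epsilon\to0$ in $\cdistr(\R^\dimens)$.

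Your route is correct and in fact more direct, but the key step is not the vague ``integrate against the Grassmannian measure'': for a distribution that sentence has no immediate meaning. What you must write down is the adjoint identity
\[
\ip{\nod f}{\psi}=\ip{\dplane f}{\dplane\psi},\qquad \psi\in C_c^\infty(V),
\]
and then observe that $\dplane\psi\in C_c^\infty(\mathbf{P}^d_V)$ because $\dplane\psi(A)=0$ unless $A$ meets $\spt(\psi)\subset V$, and the set of $d$-planes meeting the compact $\spt(\psi)$ is compact in the open set $\mathbf{P}^d_V$. This is precisely the ``geometric fact'' you allude to, phrased so that the distributional hypothesis $\ip{\dplane f}{\varphi}=0$ for $\varphi\in C_c^\infty(\mathbf{P}^d_V)$ can be applied with $\varphi=\dplane\psi$. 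Once that is made explicit, your argument is complete and avoids the mollification machinery entirely; the paper's approach, on the other hand, has the advantage of reducing everything to the classical pointwise statement for smooth compactly supported functions.
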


If $d$ is even, then $f$ is uniquely determined in $V$ by its integrals over $d$-planes which intersect~$V$, i.e. $\dplane f=0$ for all $d$-planes intersecting~$V$ implies $f|_V=0$ (see remark \ref{remark:partialdata}). The authors do not know if the result of corollary \ref{cor:partialdataresult} holds when $d$ is even. However, if $d$ is even, then the result of corollary \ref{cor:uniquecontinuationnormaloperator} cannot be true as the normal operator $N_d$ is the inverse of a local operator. See section \ref{sec:applicationstointegralgeometry} for the proofs and the definition of the $d$-plane transform of distributions. 

The following result is a general version of the Poincar\'e inequality which we need for the well-posedness of the inverse problem for the fractional Schr\"odinger equation.

\begin{theorem}
\label{thm:generalpoincare}
Let $\dimens \geq 1$, $s\geq t\geq 0$, $K\subset\R^\dimens$ a compact set and $u\in H_K^s(\R^\dimens)$. There exists a constant $\tilde{c}=\tilde{c}(n, K, s)> 0$ such that
\begin{equation}
\aabs{(-\Delta)^{t/2}u}_{L^2(\R^\dimens)}\leq \tilde{c}\aabs{(-\Delta)^{s/2}u}_{L^2(\R^\dimens)}.
\end{equation}
\end{theorem}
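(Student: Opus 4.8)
The plan is to reduce the inequality to the endpoint case $t=0$ by an elementary interpolation on the Fourier side, and then to establish that endpoint case by a compactness argument that exploits the compact support of $u$.

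\emph{Reduction to $t=0$.} By Plancherel's theorem, $\|(-\Delta)^{a/2}u\|_{L^2(\R^\dimens)}^2 = c_\dimens\int_{\R^\dimens}|\xi|^{2a}|\hat u(\xi)|^2\,d\xi$ for a fixed $c_\dimens>0$ and every $a\ge 0$, and for $0\le a\le s$ this integral is finite because $u\in H_K^s(\R^\dimens)\subset H^s(\R^\dimens)$. If $s=0$ then $t=0$ and there is nothing to prove, and if $t\in\{0,s\}$ the claim is immediate; so assume $0<t<s$ and put $\theta=t/s\in(0,1)$, so $t=\theta s$. Hölder's inequality with exponents $1/(1-\theta)$ and $1/\theta$ gives
\begin{equation}
\int_{\R^\dimens}|\xi|^{2t}|\hat u|^2\,d\xi = \int_{\R^\dimens}\bigl(|\hat u|^2\bigr)^{1-\theta}\bigl(|\xi|^{2s}|\hat u|^2\bigr)^{\theta}\,d\xi \le \Bigl(\int_{\R^\dimens}|\hat u|^2\,d\xi\Bigr)^{1-\theta}\Bigl(\int_{\R^\dimens}|\xi|^{2s}|\hat u|^2\,d\xi\Bigr)^{\theta},
\end{equation}
i.e.\ $\|(-\Delta)^{t/2}u\|_{L^2}^2\le \|u\|_{L^2}^{2(1-\theta)}\|(-\Delta)^{s/2}u\|_{L^2}^{2\theta}$. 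Hence it suffices to produce $\hat c=\hat c(\dimens,K,s)>0$ with $\|u\|_{L^2(\R^\dimens)}\le \hat c\,\|(-\Delta)^{s/2}u\|_{L^2(\R^\dimens)}$ for all $u\in H_K^s(\R^\dimens)$; substituting this into the previous display yields the theorem with $\tilde c=\hat c^{\,1-\theta}$.

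\emph{The case $t=0$.} Suppose no such $\hat c$ exists. Then there are $u_j\in H_K^s(\R^\dimens)$ with $\|u_j\|_{L^2}=1$ and $\|(-\Delta)^{s/2}u_j\|_{L^2}\to 0$. Since $s>0$ the sequence $(u_j)$ is bounded in $H^s(\R^\dimens)$ and every $u_j$ is supported in the fixed compact $K$, so by the Rellich embedding theorem a subsequence converges in $L^2(\R^\dimens)$ to some $u$ with $\|u\|_{L^2}=1$ and $\spt u\subset K$. Passing to a further subsequence along which $u_j\rightharpoonup u$ weakly in $H^s(\R^\dimens)$ and using that $(-\Delta)^{s/2}\colon H^s(\R^\dimens)\to L^2(\R^\dimens)$ is bounded, we get $(-\Delta)^{s/2}u_j\rightharpoonup(-\Delta)^{s/2}u$ weakly in $L^2$, whence weak lower semicontinuity of the norm gives $\|(-\Delta)^{s/2}u\|_{L^2}\le\liminf_j\|(-\Delta)^{s/2}u_j\|_{L^2}=0$. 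Then $|\xi|^s\hat u(\xi)=0$ for a.e.\ $\xi$, and since $|\xi|^s>0$ for $\xi\ne0$ this forces $\hat u=0$ a.e., i.e.\ $u=0$, contradicting $\|u\|_{L^2}=1$. This proves the endpoint case, and hence the theorem.

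\emph{Main obstacle and an alternative.} The only real content is the endpoint estimate, and it is precisely there that the hypothesis $\spt u\subset K$ is indispensable (otherwise one can make $\hat u$ concentrate near $\xi=0$ and destroy the inequality). The compactness argument above is non-constructive; an explicit $\tilde c$ can instead be obtained by a direct Fourier-side proof of the endpoint case: split $\int|\hat u|^2=\int_{|\xi|\ge1}+\int_{|\xi|<1}$, bound the first integral by $\int_{|\xi|\ge1}|\xi|^{2s}|\hat u|^2\le c_\dimens^{-1}\|(-\Delta)^{s/2}u\|_{L^2}^2$, and control the second using the identity $\hat u=c_\dimens'\,\hat\psi*\hat u$ for a cutoff $\psi\in C_c^\infty(\R^\dimens)$ with $\psi\equiv1$ on $K$ — splitting that convolution at a small radius $\delta=\delta(\dimens,K)$ and absorbing the resulting low-frequency term into the left-hand side.
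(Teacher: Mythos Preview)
Your argument is correct. The interpolation step is essentially the paper's Lemma~\ref{lemma:interpolationhomogeneoussobolev} (which underlies its fourth proof, Theorem~\ref{thm:poincareinterpolation}), although the paper's actual proof of Theorem~\ref{thm:generalpoincare} instead chains the embeddings $\dot H^t\hookleftarrow H^t\hookleftarrow H^s$ together with the endpoint case. One cosmetic point: your constant $\hat c^{\,1-t/s}$ depends on $t$, but since it is bounded by $\max(\hat c,1)$ you may take $\tilde c=\max(\hat c,1)$, which depends only on $(n,K,s)$ as stated.

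Where you genuinely diverge from the paper is in the endpoint case $t=0$. The paper gives five constructive proofs of this step (frequency splitting in Theorem~\ref{thm:poincarechineseguy}, Hardy--Littlewood--Sobolev type estimates, reduction via Caffarelli--Silvestre, interpolation from the classical Poincar\'e inequality, and uncertainty inequalities), each yielding an explicit constant. Your Rellich/contradiction argument is shorter and requires no case analysis, but is non-constructive; in particular it gives no information on how $\hat c$ scales with $\mathrm{diam}(K)$, whereas the paper's direct approach yields $\hat c\approx (\mathrm{diam}(K))^s$. The alternative you sketch at the end --- low/high frequency splitting together with $|\hat u(\xi)|\le \|u\|_{L^1}\le |K|^{1/2}\|u\|_{L^2}$ (which is the content of your convolution identity) --- is precisely the paper's first proof.
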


The constant $\tilde{c}$ can be expressed in terms of the classical Poincar\'e constant when $s\geq 1$ (see theorem \ref{thm:poincareinterpolation}. See section \ref{subsec:poinacareinequality} for several proofs of the Poincar\'e inequality. From the unique continuation of~$\fraclaplace$ we obtain results for the higher order fractional Schr\"odinger equation with singular electric potential. The following theorems generalize the results in \cite{GSU-calderon-problem-fractional-schrodinger, RS-fractional-calderon-low-regularity-stability} for higher exponents $s\in\R^+\setminus (\Z\cup (0, 1))$.

\begin{theorem}
\label{thm:schrodingeruniqueness}
Let $\dimens \geq 1$, $\Omega\subset\R^\dimens$ a bounded open set, $s\in\R^+\setminus\Z$, and $q_1, q_2\in Z_0^{-s}(\R^\dimens)$ which satisfy condition~\eqref{def:zeronotdirichleteigenvalue}. Let $W_1, W_2\subset\Omega_e$ be open sets. If the DN maps for the equations $\fraclaplace u+m_{q_i}(u)=0$ in $\Omega$ satisfy $\Lambda_{q_1}f|_{W_2}=\Lambda_{q_2}f|_{W_2}$ for all $f\in C_c^{\infty}(W_1)$, then $q_1|_{\Omega}=q_2|_{\Omega}$.
\end{theorem}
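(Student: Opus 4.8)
The plan is to follow the standard scheme for fractional Calderón-type problems (as in \cite{GSU-calderon-problem-fractional-schrodinger, RS-fractional-calderon-low-regularity-stability, CO-magnetic-fractional-schrodinger}), adapted to higher order $s$ using Theorem \ref{thm:uniquecontinuationoffractionallaplacian} in place of the $s\in(0,1)$ UCP. First I would establish the well-posedness of the exterior value problem $\fraclaplace u + m_{q_i}(u)=0$ in $\Omega$ with $u|_{\Omega_e}=f$: using the bilinear form $B_{q_i}(u,v)=\ip{\fraclaplace u}{v}+\ip{m_{q_i}(u)}{v}$ on $\widetilde H^s(\Omega)$ and the Poincaré inequality of Theorem \ref{thm:generalpoincare} (which guarantees coercivity of the $\fraclaplace$ part on $\widetilde H^s(\Omega)$), together with condition \eqref{def:zeronotdirichleteigenvalue} and the Fredholm alternative, one gets a unique solution $u_f$ and a well-defined DN map $\Lambda_{q_i}\colon H^s(\Omega_e)\to (H^s(\Omega_e))^*$. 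The boundedness of $m_{q_i}$ as a multiplier $H^s\to H^{-s}$ for $q_i\in Z_0^{-s}(\R^n)$ is what makes the singular potentials admissible here.

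Next I would derive the Alessandrini-type identity: for $f_1\in C_c^\infty(W_1)$ and $f_2\in C_c^\infty(W_2)$, with $u_j$ the solution having exterior data $f_j$ for potential $q_j$,
\begin{equation}
\ip{(\Lambda_{q_1}-\Lambda_{q_2})f_1}{f_2} = \ip{(m_{q_1}-m_{q_2})u_1}{u_2},
\end{equation}
which follows from the symmetry of the $\fraclaplace$ bilinear form and the definition of the DN maps. By the hypothesis $\Lambda_{q_1}f|_{W_2}=\Lambda_{q_2}f|_{W_2}$ the left side vanishes, so $\ip{(q_1-q_2)u_1}{u_2}=0$ for all such solutions $u_1,u_2$.

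The key step — and the main obstacle — is the Runge approximation property: the set $\{u_f|_\Omega : f\in C_c^\infty(W_j)\}$ is dense in, say, $\widetilde H^s(\Omega)$ or in a suitable solution space, for $j=1,2$. This is proved by a Hahn–Banach/duality argument: if $v\in (H^s(\Omega))^*$ annihilates all such restrictions, one solves an adjoint exterior problem, and the vanishing of the resulting adjoint solution together with its fractional Laplacian in $W_j$ forces, via the UCP of $\fraclaplace$ (Theorem \ref{thm:uniquecontinuationoffractionallaplacian}, valid for all $s\in\R^+\setminus\Z$), that $v=0$ on $\Omega$. This is exactly where the higher-order UCP is essential and replaces the $s\in(0,1)$ input used in the earlier papers; the regularity bookkeeping (checking that the adjoint solutions lie in some $H^r(\R^n)$ so Theorem \ref{thm:uniquecontinuationoffractionallaplacian} applies, and handling the singular potential $q_i$ which only maps $H^s\to H^{-s}$) is the delicate part. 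Finally, combining the Runge density with the integral identity $\ip{(q_1-q_2)u_1}{u_2}=0$: approximating $1\in \widetilde H^s(\Omega)$ (or appropriate test functions) by solutions $u_1,u_2$ and using continuity of the pairing $\ip{(q_1-q_2)\,\cdot}{\cdot}$ on $\widetilde H^s(\Omega)\times\widetilde H^s(\Omega)$ (again by the $Z_0^{-s}$ multiplier bound), we conclude $\ip{(q_1-q_2)\varphi}{\psi}=0$ for all $\varphi,\psi\in C_c^\infty(\Omega)$, hence $q_1|_\Omega=q_2|_\Omega$ as elements of $\distr(\Omega)$.
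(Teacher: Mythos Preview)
Your proposal is correct and follows essentially the same route as the paper: well-posedness via the bilinear form and the higher order Poincar\'e inequality (lemma~\ref{lemma:schrodingerexistenceofsolutions}), the Alessandrini identity (lemma~\ref{lemma:schrodingeralessandrini}), Runge approximation via Hahn--Banach and the UCP of theorem~\ref{thm:uniquecontinuationoffractionallaplacian} (lemma~\ref{lemma:schrodingerrunge}), and then the density argument to conclude $q_1|_\Omega=q_2|_\Omega$. One small point: in the final step you cannot literally approximate the constant $1$ in $\widetilde H^s(\Omega)$ when $s>1/2$; the paper instead takes $v_1=\varphi\in C_c^\infty(\Omega)$ arbitrary and $v_2\in C_c^\infty(\Omega)$ equal to $1$ on a neighborhood of $\spt(\varphi)$, so that $\ip{q_1-q_2}{v_1v_2}=\ip{q_1-q_2}{\varphi}$, which is what your parenthetical ``appropriate test functions'' should be made precise as.
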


\begin{theorem}
\label{thm:schrodingerrungeapproximation}
Let $\dimens \geq 1$ and $s\in\R^+\setminus\Z$. Let $\Omega\subset\R^\dimens$ be a bounded open set and $\Omega_1\supset\Omega$ any open set such that $\text{int}(\Omega_1\setminus\Omega)\neq \varnothing$. If $q\in Z_0^{-s}(\R^\dimens)$ satisfies condition~\eqref{def:zeronotdirichleteigenvalue}, then any $g\in \widetilde{H}^s(\Omega)$ can be approximated arbitrarily well in $\widetilde{H}^s(\Omega)$ by solutions $u\in H^s(\R^\dimens)$ to the equation $\fraclaplace u+m_q(u)=0$ in $\Omega$ such that $\spt(u)\subset\overline{\Omega}_1$.
\end{theorem}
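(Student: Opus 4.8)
The plan is to use the standard Hahn--Banach duality argument, the key input being the UCP for $\fraclaplace$ from Theorem \ref{thm:uniquecontinuationoffractionallaplacian}. Let $\Omega_e^1 = \R^\dimens \setminus \overline{\Omega}_1$, and let
\[
\mathcal{S} = \{ u|_{\Omega} : u \in H^s(\R^\dimens), \ \spt(u)\subset\overline{\Omega}_1, \ \fraclaplace u + m_q(u)=0 \ \text{in}\ \Omega \}
\]
viewed as a subspace of $\widetilde{H}^s(\Omega)$ (after identifying such $u$ with the element of $\widetilde{H}^s(\Omega)$ it restricts to — here one uses that $u$ supported in $\overline{\Omega}_1$ with $u=0$ on $\Omega_e$... actually more carefully, one works with the ``interior'' data $u|_{\Omega}$ and produces $g \in \widetilde{H}^s(\Omega)$; I would set things up exactly as in \cite{GSU-calderon-problem-fractional-schrodinger}). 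First I would show that proving density of $\mathcal{S}$ in $\widetilde{H}^s(\Omega)$ reduces, by Hahn--Banach and the reflexivity/duality $(\widetilde{H}^s(\Omega))^* = H^{-s}(\overline{\Omega})$, to showing that any $v \in H^{-s}(\overline{\Omega})$ which annihilates $\mathcal{S}$ must be zero.

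Next I would bring in the dual (adjoint) equation. Given such an annihilating $v$, let $w \in H^s(\R^\dimens)$ solve the adjoint problem $\fraclaplace w + m_q(w) = v$ in $\Omega$ with $w|_{\Omega_e}=0$; this is solvable and unique precisely because $q$ satisfies condition~\eqref{def:zeronotdirichleteigenvalue} (the adjoint of $m_q$ is again multiplication by $q$ since $q$ is real, and the bilinear form is symmetric, so the well-posedness for the adjoint equation follows from the same non-eigenvalue hypothesis; the needed coercivity uses Theorem \ref{thm:generalpoincare}). Then for any admissible $u$ in the definition of $\mathcal{S}$, testing the equation for $u$ against $w$ and the equation for $w$ against $u$ and subtracting — i.e. using the symmetry of the bilinear form $B_q(u,w) = \langle \fraclaplace u, w\rangle + \langle m_q(u), w\rangle$ — gives $\langle v, u\rangle_{\Omega} = 0$ automatically from $u$ solving its equation, while the annihilation condition $\langle v, u|_{\Omega}\rangle = 0$ combined with the equation for $w$ forces $\langle \fraclaplace w, u\rangle_{\R^\dimens}=0$ for all such $u$. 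Choosing $u$ freely supported in $\mathrm{int}(\Omega_1\setminus\Omega)$ (here the hypothesis $\mathrm{int}(\Omega_1\setminus\Omega)\neq\varnothing$ is essential, and one checks such $u$, suitably corrected to solve the equation in $\Omega$, lie in $\mathcal{S}$ — or one argues directly that the pairing $\langle \fraclaplace w, \varphi\rangle = 0$ for all $\varphi \in C_c^\infty(\mathrm{int}(\Omega_1\setminus\Omega))$), I conclude $\fraclaplace w = 0$ on the open set $W:=\mathrm{int}(\Omega_1\setminus\Omega)$. Since also $w|_{\Omega_e}=0$ and $W \subset \Omega_e$, we have $w|_W = 0$ and $\fraclaplace w|_W=0$.

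At this point the UCP of Theorem \ref{thm:uniquecontinuationoffractionallaplacian} applies (with $s \in \R^+\setminus\Z \subset (-n/4,\infty)\setminus\Z$ and $w \in H^s(\R^\dimens)$): $w$ vanishes identically, hence $v = \fraclaplace w + m_q(w) = 0$ in $\Omega$, which is what we wanted. I expect the main obstacle to be purely bookkeeping rather than conceptual: namely, setting up the functional-analytic framework so that ``$g \in \widetilde{H}^s(\Omega)$ approximated by $u|_{\Omega}$ with $\spt u \subset \overline{\Omega}_1$'' is phrased against the correct dual pairing, and verifying that the trial functions supported in $\mathrm{int}(\Omega_1\setminus\Omega)$ genuinely produce elements of $\mathcal{S}$ (equivalently, correcting an arbitrary $\varphi\in C_c^\infty(W)$ by the unique solution of the exterior problem so that the equation holds in $\Omega$, which again requires condition~\eqref{def:zeronotdirichleteigenvalue} and the Poincaré inequality of Theorem \ref{thm:generalpoincare} for well-posedness). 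The only genuinely nontrivial analytic ingredients — well-posedness of the (adjoint) exterior value problem and the UCP — are already available from the quoted results, so the remainder is a routine adaptation of the $s\in(0,1)$ argument of \cite{GSU-calderon-problem-fractional-schrodinger} to general $s\in\R^+\setminus\Z$.
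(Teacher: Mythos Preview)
Your proposal is correct and follows essentially the same approach as the paper: the paper packages the Hahn--Banach/adjoint-equation/UCP argument into a separate density lemma (their Lemma~\ref{lemma:schrodingerrunge}, showing that $\mathcal{R}=\{P_qf-f:f\in C_c^\infty(W)\}$ is dense in $\widetilde{H}^s(\Omega)$ for $W\subset\Omega_e$ open), and then deduces the theorem in two lines by choosing $W$ with $\overline{W}\subset\Omega_1\setminus\overline{\Omega}$. Your bookkeeping uncertainty about how to view the approximants inside $\widetilde{H}^s(\Omega)$ is resolved precisely by working with $P_qf-f$ rather than $u|_\Omega$, since $P_qf-f\in\widetilde{H}^s(\Omega)$ by construction and the solution $u=P_qf$ satisfies $\spt(u)\subset\overline{\Omega}\cup\spt(f)\subset\overline{\Omega}_1$.
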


We remark that the approximation property in theorem \ref{thm:schrodingerrungeapproximation} also holds in~$L^2(\Omega)$ when one takes restrictions of the solutions (see \cite[Theorem 1.3]{GSU-calderon-problem-fractional-schrodinger}). In~\cite{DSV-all-functions-are-s-harmonic, KR-all-functions-are-s-harmonic} the authors prove similar approximation results: $C^k$-functions can be approximated (in the $C^k$-norm) in the unit ball by $s$-harmonic functions, i.e. functions~$u$ which satisfy $\fraclaplace u=0$ in $B_1(0)$ (see also \cite[Remark 7.3]{GSU-calderon-problem-fractional-schrodinger}). Theorems \ref{thm:schrodingeruniqueness} and \ref{thm:schrodingerrungeapproximation} are proved in section \ref{sec:schrodingerequation}. The proofs are almost identical to those in \cite{GSU-calderon-problem-fractional-schrodinger, RS-fractional-calderon-low-regularity-stability} and only slight changes need to be done. We will present the main ideas of the proofs for clarity and in order to make a comparison to the more complicated case of FMSE. 

We have achieved the following result on the Calder\'on problem for FMSE:

\begin{theorem}
\label{thm:FMSE}
Let $\Omega \subset \mathbb R^n, \; n\geq 2$, be a bounded open set, $s\in\mathbb R^+ \setminus \mathbb Z$, and let $A_i, q_i$ verify assumptions \ref{assumption1}-\ref{assumption5} in section \ref{sec:magneticschrodinger} for $i=1,2$. Let $W_1, W_2\subset \Omega_e$ be open sets. If the DN maps for the FMSEs in $\Omega$ relative to $(A_1,q_1)$ and $(A_2,q_2)$ satisfy   $$\Lambda^s_{A_1,q_1}[f]|_{W_2}=\Lambda^s_{A_2,q_2}[f]|_{W_2} \;\;\;\;\; \mbox{for all}\; f\in C^\infty_c(W_1),$$ \noindent then $(A_1,q_1)\sim(A_2,q_2)$, that is, the potentials coincide up to gauge.
\end{theorem}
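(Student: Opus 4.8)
\emph{Proof strategy.} I would follow the scheme used for $s\in(0,1)$ in \cite{CO-magnetic-fractional-schrodinger} (which in turn follows \cite{GSU-calderon-problem-fractional-schrodinger}) and adapt it to $s\in\R^+\setminus\Z$; the three ingredients are a weak unique continuation principle for the magnetic operator $(-\Delta)^s_A$, the Runge approximation property for FMSE, and an Alessandrini-type integral identity tested against suitable solutions. First one fixes the weak formulation: building $(-\Delta)^s_A$ from the fractional gradient $\nabla^s$ (a tensor of order $\floor{s}$), FMSE is recast as a bilinear identity $B_{A,q}[u,\phi]=0$ for all $\phi\in\widetilde{H}^s(\Omega)$. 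Under assumptions \ref{assumption1}--\ref{assumption5} the form $B_{A,q}$ is bounded on $H^s(\R^n)$; Theorem \ref{thm:generalpoincare} shows that $u\mapsto\|(-\Delta)^{s/2}u\|_{L^2(\R^n)}$, which up to a constant equals $\|\nabla^s u\|_{L^2(\R^n)}$, is an equivalent norm on $\widetilde{H}^s(\Omega)$, so the principal part of $B_{A,q}$ is coercive there and the first order magnetic terms get absorbed; the zeroth order term together with the non-eigenvalue hypothesis is treated by the Fredholm alternative. Hence the forward problem and its transpose are well posed and the DN map $\Lambda^s_{A,q}\colon H^s(\Omega_e)\to(H^s(\Omega_e))^*$ is well defined.

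\emph{Weak UCP and Runge approximation.} I would then prove: if $v\in H^s(\R^n)$ satisfies $v|_W=0$ and $(-\Delta)^s_A v|_W=0$ for some nonempty open $W\subset\R^n$, then $v\equiv 0$. The plan is to split $(-\Delta)^s_A v=(-\Delta)^s v+P_A v$, with $P_A v$ the lower order magnetic remainder assembled from $A$ and $\nabla^s v$, use $v|_W=0$ and the regularity of $A$ to produce a nonempty open $W'\subseteq W$ on which $(-\Delta)^s v$ also vanishes, and then invoke Theorem \ref{thm:uniquecontinuationoffractionallaplacian} for $v\in H^s(\R^n)$ (alternatively, trivialize $A$ by a local gauge near $W$ and reduce directly to the scalar UCP). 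The standard Hahn--Banach/duality argument upgrades this to the Runge property: any $F\in(\widetilde{H}^s(\Omega))^*$ vanishing on $u|_\Omega$ for all $u\in H^s(\R^n)$ solving FMSE in $\Omega$ with $u-f$ supported in $\overline{W}_1$ for some $f\in C_c^\infty(W_1)$ produces, via the transpose equation, a function vanishing together with its magnetic fractional Laplacian in $W_1$, hence $F=0$; so such solutions are dense in $\widetilde{H}^s(\Omega)$.

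\emph{Alessandrini identity, extraction of the potentials, gauge.} For solutions $u_j$ of FMSE relative to $(A_j,q_j)$ with exterior data $f_j\in C_c^\infty(W_j)$, the DN map hypothesis gives $\langle(\Lambda^s_{A_1,q_1}-\Lambda^s_{A_2,q_2})f_1,f_2\rangle=0$, and expanding the two bilinear forms turns this into an integral identity over $\Omega$, bilinear in $(u_1,u_2)$ and linear in $A_1-A_2$, $|A_1|^2-|A_2|^2$ and $q_1-q_2$ with the appropriate $\nabla^s$ pairings. By the Runge property one may replace $u_1,u_2$ by arbitrary elements of $\widetilde{H}^s(\Omega)$; testing with carefully chosen pairs --- a fixed cutoff against a varying function, then using a fractional Leibniz/product rule for $\nabla^s$ to separate the first order ($A$) contribution from the zeroth order ($q$ and $|A|^2$) one --- forces first the magnetic parts and then the scalar parts to coincide up to the gauge of section \ref{sec:magneticschrodinger}, that is $(A_1,q_1)\sim(A_2,q_2)$.

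\emph{Main obstacle.} The crux is twofold. First, the weak UCP: the magnetic perturbation is itself nonlocal, so it is not automatic that the exterior vanishing of $v$ propagates to the vanishing of $(-\Delta)^s v$ on an open set, and a genuine argument (or a local gauge transformation) is required before Theorem \ref{thm:uniquecontinuationoffractionallaplacian} applies. Second, in the last step the algebra needed to disentangle the $\nabla^s$ and $|A|^2$ terms --- especially for $s>1$, where $\nabla^s$ is tensor-valued --- and to recognize precisely the gauge equivalence is the most technical part of the proof.
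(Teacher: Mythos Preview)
Your skeleton is right --- well-posedness, weak UCP, Runge approximation, Alessandrini identity, extraction --- but two of the load-bearing steps are handled quite differently in the paper, and your version of the last one is vague enough that I would call it a gap.

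\textbf{Weak UCP.} You are making this harder than it is. Assumption \ref{assumption2} says $\spt(A)\subset\Omega\times\Omega$, so for $W\subset\Omega_e$ the magnetic remainder $P_A v$ vanishes identically on $W$ (the kernel $A(x,y)$ is zero whenever $x\notin\Omega$). Hence $(-\Delta)^s_A v|_W=(-\Delta)^s v|_W$ with no further work, and Theorem \ref{thm:uniquecontinuationoffractionallaplacian} applies directly. There is no need for a local gauge or a delicate propagation argument; the ``crux'' you flag here is not actually present.

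\textbf{Runge space and the extraction step.} The paper proves Runge density in $L^2(\Omega)$, not in $\widetilde H^s(\Omega)$, and this matters: the test functions it uses are not in $\widetilde H^s(\Omega)$. More importantly, the disentangling of the magnetic and scalar contributions is \emph{not} done via a fractional Leibniz rule and cutoffs. Instead the paper first rewrites $(-\Delta)^s_A$ in the expanded form of Lemma \ref{expanded},
\[
(-\Delta)^s_A u = (-\Delta)^s u + \sum_{|\beta|\le\floor{s}}\partial^\beta u\,\mathfrak M_\beta(S) + \int u(y)\,\mathfrak N(S)(x,y)\,dy + u\int|A|^2\,dy,
\]
so that the Alessandrini identity becomes an explicit relation in the unknown operators $\mathfrak N(S_1-S_2)$ and $\mathfrak M_\beta(S_1-S_2)$. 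Then, after removing $u_2$ by $L^2$-density, it tests with very specific $u_1$'s: first $g(y)=e^{-1/|x-y|}\psi(y)$, which vanishes to infinite order at the fixed point $x$ and therefore kills all the $\partial^\beta u_1(x)$ terms, isolating and annihilating $\mathfrak N(S_1-S_2)$; then the monomials $h_\beta(y)=y^\beta$ for $|\beta|=0,1,\dots,\floor{s}$, which peel off the $\mathfrak M_\beta$ conditions one order at a time. The gauge $\sim$ is \emph{defined} precisely as the simultaneous vanishing of $\mathfrak N(S_1-S_2)$, of $\mathfrak M_{(0,\dots,0)}(S_1-S_2)+\int(|A_1|^2-|A_2|^2)+(q_1-q_2)$, and of $\mathfrak M_\beta(S_1-S_2)$ for $1\le|\beta|\le\floor{s}$, so this procedure reads off the gauge conclusion directly. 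Your proposal does not contain a mechanism of this kind, and a generic product-rule argument on $\nabla^s$ would not by itself separate the nonlocal kernel term $\int u_1(y)\mathfrak N(\cdot)\,dy$ from the local $\partial^\beta u_1\,\mathfrak M_\beta$ terms.
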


\noindent An in-depth clarification of the assumptions and the definition of the gauge involved in the proof are presented in section \ref{sec:magneticschrodinger}.

\subsection{Organization of the article}
This article is organized as follows. Section~\ref{sec:preliminaries} is devoted to preliminaries. We introduce our notation and definitions of relevant quantities. In sections \ref{subsec:uniquecontinuationresults} and \ref{subsec:poinacareinequality} we prove the unique continuation property of~$\fraclaplace$ for $s\in (-n/2,\infty)\setminus \Z$ and give several proofs for the fractional Poincar\'e inequality. We introduce some applications in integral geometry and partial data problems of the $d$-plane transform in section \ref{sec:applicationstointegralgeometry}. In section~\ref{sec:schrodingerequation}, we show the uniqueness and the Runge approximation results for the higher order fractional Schr\"odinger equation with singular electric potential. We prove the uniqueness result up to a gauge for the higher order fractional magnetic Schr\"odinger equation in section~\ref{sec:magneticschrodinger}. Finally, in section~\ref{sec:possiblegeneralizations}, we discuss other problems that would now naturally continue our work. There are many potential recent results in inverse problems which perhaps can be generalized to higher order fractional Laplacians using our unique continuation result and fractional Poincar\'e inequality.

\subsection*{Acknowledgements} The authors wish to thank Yi-Hsuan Lin for suggesting to study higher order fractional Calder\'on problems and for his idea of reducing the UCP of higher order fractional Laplacians  to the case $s\in (0, 1)$. The authors are grateful to Mikko Salo for proposing a proof for the fractional Poincar\'e inequality for $n=1$ and $s \in (1/2,1)$, and for many other helpful discussions. We thank Joonas Ilmavirta for discussions about integral geometry. The authors wish to thank the anonymous referees for helpful comments and suggestions to improve the article. G.C. was partially supported by the European Research Council under Horizon 2020 (ERC CoG 770924). K.M. and J.R. were supported by Academy of Finland (Centre of Excellence in Inverse Modelling and Imaging, grant numbers 284715 and 309963).

\section{Preliminaries}
\label{sec:preliminaries}
In this section, we will go through our basic notations and definitions. The following theory of distributions, Fourier analysis and Sobolev spaces can be found in many books (see for example \cite{AB-psidos-and-singular-integrals, BCD-fourier-analysis-nonlinear-pde, BL-interpolation-spaces, HO:analysis-of-pdos, HO-topological-vector-spaces,  ML-strongly-elliptic-systems, MI:distribution-theory, SA:fourier-analysis-distributions, TRE:topological-vector-spaces-distributions}). We write $\abs{\cdot}$ for both the Euclidean norm of vectors and the absolute value of complex numbers. We denote by $\N_0$ the set of natural numbers including zero.

\subsection{Distributions and Fourier transform}\label{sec:distributions}
We denote by $\smooth(\R^\dimens)$ the set of smooth functions equipped with the topology of uniform convergence of derivatives of all order on compact sets. We also denote by $\csmooth(\R^\dimens)$ the set of compactly supported smooth functions with the topology of uniform convergence of derivatives of all order in a fixed compact set. The topological duals of these spaces are denoted by $\distr(\R^\dimens)$ and $\cdistr(\R^\dimens)$. Elements in the space $\cdistr(\R^\dimens)$ can be identified as distributions in $\distr(\R^\dimens)$ with compact support. 

We also use the space of rapidly decreasing smooth functions, i.e. Schwartz functions. Define the Schwartz space as
$$
\schwartz(\R^\dimens)=\left\{\varphi\in C^{\infty}(\R^\dimens): \aabs{\langle\cdot\rangle^N\partial^{\beta}\varphi}_{L^{\infty}(\R^\dimens)}<\infty \ \text{for all} \ N\in\N \ \text{and} \  \beta\in\N_0^\dimens\right\},
$$
where $\langle x\rangle=(1+\abs{x}^2)^{1/2}$, equipped with the topology induced by the seminorms $\aabs{\langle\cdot\rangle^N\partial^{\beta}\varphi}_{L^{\infty}(\R^\dimens)}$. The continuous dual of $\schwartz(\R^\dimens)$ is denoted by $\tempered(\R^\dimens)$ and its elements are called tempered distributions. We have the continuous inclusions $\cdistr(\R^\dimens)\subset\tempered(\R^\dimens)\subset\distr(\R^\dimens)$. The Fourier transform of $u\in L^1(\R^\dimens)$ is defined as
$$
(\fourier u)(\xi)=\hat{u}(\xi)=\int_{\R^\dimens}e^{-ix\cdot\xi}u(x)\der x
$$
and it is an isomorphism $\fourier\colon\schwartz(\R^\dimens)\rightarrow\schwartz(\R^\dimens)$. By duality the Fourier transform is also an isomorphism $\fourier\colon\tempered(\R^\dimens)\rightarrow\tempered(\R^\dimens)$. By density of $\schwartz(\R^\dimens)$ in $L^2(\R^\dimens)$ the Fourier transform can be extended to an isomorphism $\fourier\colon L^2(\R^\dimens)\rightarrow L^2(\R^\dimens)$. The following subset of Schwartz space
$$
\cschwartz(\R^\dimens)=\{\varphi\in\schwartz(\R^\dimens):\hat{\varphi}|_{B(0, \epsilon)}=0 \ \text{for some} \ \epsilon>0\}
$$
is used to define fractional Laplacians on homogeneous Sobolev spaces.

Finally, we denote by $\rapidly(\R^\dimens)$ the space of rapidly decreasing distributions. One has that $T\in\rapidly(\R^\dimens)$ if and only if for any $N\in\N$ there exist $M(N)\in\N$ and continuous functions $g_{\beta}$ such that
$$
T=\sum_{\abs{\beta}\leq M(N)}\partial^{\beta}g_{\beta}\;,
$$
where $\langle\cdot\rangle^N g_{\beta}$ is a bounded function for every $\abs{\beta}\leq M(N)$. Alternatively one can characterize $\rapidly(\R^\dimens)$ via the Fourier transform: it holds that $\fourier\colon\rapidly(\R^\dimens)\rightarrow\slowly(\R^\dimens)$ is a bijective map where $\slowly(\R^\dimens)$ is the space of smooth functions with polynomially bounded derivatives of all orders. We have the continuous inclusions $\cdistr(\R^\dimens)\subset\rapidly(\R^\dimens)\subset\tempered(\R^\dimens)$. For example $C_{\infty}(\R^\dimens)\subset\rapidly(\R^\dimens)$, where $f\in C_{\infty}(\R^\dimens)$ if and only if $f$ is continuous and $\langle\cdot\rangle^N f$ is bounded for every $N\in\N$. The convolution formula for the Fourier transform $\widehat{f\ast g}=\hat{f}\hat{g}$ holds in the sense of distributions when $f\in\rapidly(\R^\dimens)$ and $g\in\tempered(\R^\dimens)$. For more details on distributions, see the classic books~\cite{HO:analysis-of-pdos, HO-topological-vector-spaces, TRE:topological-vector-spaces-distributions}.

\subsection{Fractional Laplacian on Sobolev spaces}
\label{sec:sobolevspacesandfractionallaplacian}
Let $r\in\R$. We define the inhomogeneous fractional $L^2$ Sobolev space of order $r$ to be the set
$$
H^{r}(\R^\dimens)=\{u\in\tempered(\R^\dimens): \ifourier(\langle\cdot\rangle^r\hat{u})\in L^2(\R^\dimens)\}
$$
equipped with the norm
$$
\aabs{u}_{H^{r}(\R^\dimens)}=\aabs{\ifourier(\langle\cdot\rangle^r\hat{u})}_{L^2(\R^\dimens)}.
$$
The spaces $H^r(\R^\dimens)$ are Hilbert spaces for all $r\in\R$. It follows that both $\schwartz(\R^\dimens)$ and $\cschwartz(\R^\dimens)$ are dense in $H^r(\R^\dimens)$ for all $r\in\R$. Note that 
$$
\rapidly(\R^\dimens)\subset\bigcup_{r\in\R}H^r(\R^\dimens).
$$

If $s\in (0, 1)$, the fractional Laplacian can be defined in several equivalent ways~\cite{KWA-ten-definitions-fractional-laplacian}. We will take the Fourier transform approach which allows us to define it as a continuous map on Sobolev spaces for all $s\in\R^+\setminus\Z$. Define the fractional Laplacian of order $s\in\R^+\setminus\Z$ as $\fraclaplace \varphi=\ifourier(\abs{\cdot}^{2s}\hat{\varphi})$ for $\varphi\in\schwartz(\R^\dimens)$. Then $\fraclaplace\colon\schwartz(\R^\dimens)\rightarrow H^{r-2s}(\R^\dimens)$ is linear and continuous with respect to the norm $\aabs{\cdot}_{H^{r}(\R^\dimens)}$ by a simple calculation. Thus we can uniquely extend it to a continuous linear operator $\fraclaplace\colon H^r(\R^\dimens)\rightarrow H^{r-2s}(\R^\dimens)$ as $\fraclaplace u=\lim_{k\rightarrow\infty}\fraclaplace\varphi_k$, where $\varphi_k\in\schwartz(\R^\dimens)$ is such that $\varphi_k\rightarrow u$ in $H^{r}(\R^\dimens)$.

On the other hand, if $s>-\dimens/4$, one can always define $\fraclaplace u$ for $u\in H^r(\R^\dimens)$ as the tempered distribution $\fraclaplace u=\ifourier(\abs{\cdot}^{2s}\hat{u})$, note that we also allow integer values of $s$ here. This can be seen in the following way: let $\varphi_k\in\schwartz(\R^\dimens)$ such that $\varphi_k\rightarrow 0$ in $\schwartz(\R^\dimens)$. It holds that $\abs{\cdot}^{-\beta}\in L^1_{loc}(\R^\dimens)$ if and only if $\beta<\dimens$. Taking $N\in\N$ large enough and using Cauchy-Schwartz we obtain
\begin{align*}
\int_{\R^\dimens}\abs{x}^{2s}\abs{\hat{u}(x)}\abs{\varphi_k(x)}\der x&\leq \bigg(\int_{\R^\dimens}\langle x\rangle^{2r}\abs{\hat{u}(x)}^2\der x\bigg)^{1/2}\bigg(\int_{\R^\dimens}\abs{x}^{4s}\langle x\rangle^{-2r}\abs{\varphi_k(x)}^2\der x\bigg)^{1/2} \\
&\leq C\bigg(\int_{\R^\dimens}\frac{\abs{x}^{4s}}{\langle x\rangle^{2N}}\der x\bigg)^{1/2}\aabs{\langle\cdot\rangle^{N-r}\varphi_k}_{L^{\infty}(\R^\dimens)}\rightarrow 0.
\end{align*}
Hence $\abs{\cdot}^{2s}\hat{u}\in\tempered(\R^\dimens)$ and also $\fraclaplace u=\ifourier(\abs{\cdot}^{2s}\hat{u})\in\tempered(\R^\dimens)$. The definition can be relaxed to $s>-\dimens/2$ if we assume that $\langle\cdot\rangle^t\hat{u}\in L^{\infty}(\R^\dimens)$ for some $t\in\R$. This holds for example if $u\in\rapidly(\R^\dimens)$ or $u\in H^{r, 1}(\R^\dimens)$ (see the definition of Bessel potential spaces below). When $s\geq 0$, we again obtain that $\fraclaplace\colon H^r(\R^\dimens)\rightarrow H^{r-2s}(\R^\dimens)$ is continuous. It follows from the properties of the Fourier transform that $(-\Delta)^k\fraclaplace=(-\Delta)^{k+s}$ when $s>-\dimens/2$ and $k\in\N$. This relation will be used many times.

Fractional Laplacians with negative powers $s$ have a connection to Riesz potentials. Let $\alpha\in\R$ such that $0<\alpha<\dimens$. We define the Riesz potential $\riesz\colon\rapidly(\R^\dimens)\rightarrow\tempered(\R^\dimens)$ as $\riesz f=f\ast\kernel$, where the kernel is $\kernel(x)=\abs{x}^{-\alpha}$. It follows that~$\riesz$ is continuous in the distributional sense and $\riesz=(-\Delta)^{-s}$, up to a constant factor, where $s=(\dimens-\alpha)/2$. On the other hand, if $-\dimens/2<s<0$, then one can write $\fraclaplace f=f\ast\abs{\cdot}^{-2s-n}=I_{2s+n}f$, also up to a constant factor. Hence fractional Laplacians with negative powers correspond to Riesz potentials and vice versa.

Following~\cite{BCD-fourier-analysis-nonlinear-pde}, one can define fractional Laplacians and Riesz potentials on homogeneous Sobolev spaces. Let us define
$$
\dot{H}^r(\R^\dimens)=\{u\in\tempered(\R^\dimens): \hat{u}\in L^1_{loc}(\R^\dimens) \ \text{and} \  \abs{\cdot}^r\hat{u}\in L^2(\R^\dimens)\}
$$
and equip it with the norm
$$
\aabs{u}_{\dot{H}^r(\R^\dimens)}=\bigg(\int_{\R^\dimens}\abs{\xi}^{2r}\abs{\hat{u}(\xi)}^2\der\xi\bigg)^{1/2}.
$$
The norm $\aabs{u}_{\dot{H}^r(\R^\dimens)}$ is homogeneous with respect to scaling $\xi\rightarrow\lambda\xi$ in contrast to the norm $\aabs{u}_{H^r(\R^\dimens)}$. We have the inclusions $\dot{H}^r(\R^\dimens)\subsetneq H^r(\R^\dimens)$ for $r<0$ and $H^r(\R^\dimens)\subsetneq \dot{H}^r(\R^\dimens)$ for $r>0$. If $r<\dimens/2$, then $\dot{H}^r(\R^\dimens)$ is a Hilbert space and $\cschwartz(\R^\dimens)$ is dense in $\dot{H}^r(\R^\dimens)$. Let $s\geq 0$ and define $\fraclaplace\varphi=\ifourier(\abs{\cdot}^{2s}\hat{\varphi})$ for $\varphi\in\cschwartz(\R^\dimens)$. Then $\fraclaplace\colon\cschwartz(\R^\dimens)\rightarrow\dot{H}^{r-2s}(\R^\dimens)$ is an isometry with respect to the norm $\aabs{\cdot}_{\dot{H}^r(\R^\dimens)}$ and by density can be extended to a continuous map $\fraclaplace\colon\dot{H}^r(\R^\dimens)\rightarrow\dot{H}^{r-2s}(\R^\dimens)$ when $r<\dimens/2$. Similarly one obtains that $\riesz\colon\dot{H}^r(\R^\dimens)\rightarrow\dot{H}^{r+n-\alpha}(\R^\dimens)$ is a continuous map for $r<\alpha-\dimens/2$ and corresponds to fractional Laplacians with negative powers, up to a constant factor.

The fractional Laplacian can also be defined on Bessel potential spaces. Let $1\leq p<\infty$. We define 
$$
H^{r, p}(\R^\dimens)=\{u\in\tempered(\R^\dimens): \ifourier(\langle\cdot\rangle^r\hat{u})\in L^p(\R^\dimens)\}
$$
and equip it with the norm
$$
\aabs{u}_{H^{r,p}(\R^\dimens)}=\aabs{\ifourier(\langle\cdot\rangle^r\hat{u})}_{L^p(\R^\dimens)}.
$$
It follows that $H^{r, p}(\R^\dimens)$ is a Banach space and $\schwartz(\R^\dimens)$ is dense in $H^{r, p}(\R^\dimens)$ for all $r\in\R$. By the Mikhlin multiplier theorem, one obtains that the operator $\fraclaplace\colon H^{r, p}(\R^\dimens)\rightarrow H^{r-2s, p}(\R^\dimens)$ is continuous for $s\geq 0$ and $1<p<\infty$. The fractional Laplacian is also defined in the space $H^{r, 1}(\R^\dimens)$ since $H^{r, 1}(\R^\dimens)\hookrightarrow H^{\frac{2r-n-\epsilon}{2}}(\R^\dimens)$ for any $\epsilon>0$ by the continuity of the Fourier transform $\fourier\colon L^1(\R^\dimens)\rightarrow L^{\infty}(\R^\dimens)$.

One can define fractional Laplacians on more general spaces. It follows that if $s\in (-\dimens/2, 1]$, then $\fraclaplace\colon\schwartz(\R^\dimens)\rightarrow\mathscr{S}_s(\R^\dimens)$ is continuous where $\mathscr{S}_s(\R^\dimens)$ is the set
$$
\mathscr{S}_s(\R^\dimens)=\{\varphi\in C^{\infty}(\R^\dimens):\langle\cdot\rangle^{\dimens+2s}\partial^{\beta}\varphi\in L^{\infty}(\R^\dimens) \ \text{for all} \ \beta\in\N_0^{\dimens}\}
$$
equipped with the topology induced by the seminorms $\aabs{\langle\cdot\rangle^{n+2s}\partial^{\beta}\varphi}_{L^{\infty}(\R^\dimens)}$. One can then extend~$\fraclaplace$ by duality to a continuous map $\fraclaplace\colon(\mathscr{S}_s(\R^\dimens))^*\rightarrow\tempered(\R^\dimens)$. See~\cite{GSU-calderon-problem-fractional-schrodinger,SI-regularity-obstacle-problem} for more details and a characterization of the dual $(\mathscr{S}_s(\R^\dimens))^*$.

\subsection{Trace spaces and singular potentials}
Let $U, \, F\subset\R^\dimens$ be an open and a closed set. We define the following Sobolev spaces
\begin{align*}
H^r(U)&=\{u|_U: u\in H^r(\R^\dimens)\} \\
\widetilde{H}^r(U)&= \ \text{closure of} \ C_c^{\infty}(U) \ \text{in} \  H^r(\R^\dimens) \\
H_0^r(U)&= \ \text{closure of} \ C_c^{\infty}(U) \ \text{in} \ H^r(U) \\
H_F^r(\R^\dimens) &=\{u\in H^r(\R^\dimens): \spt(u)\subset F\}.
\end{align*}
It is obvious that $\widetilde{H}^r(U)\subset H^r_{\overline{U}}(\R^\dimens)$ and $\widetilde{H}^r(U)\subset H_0^r(U)$. In nonlocal problems, we impose exterior values for the equation instead of boundary values. Therefore exterior values are considered to be the same if their difference is in the space $\widetilde{H}^r(U)$. For example, in equation \eqref{eq:fractionalschrodingerequation} the condition $u|_{\Omega_e}=f$ means that $u-f\in\widetilde{H}^s(\Omega)$, i.e. $u$ and $f$ are equal outside $\overline{\Omega}$, where~$\Omega$ is bounded open set. This motivates the definition of the abstract trace space $X=H^r(\R^\dimens)/\widetilde{H}^r(\Omega)$ which identifies functions in $\Omega_e$. If~$\Omega$ is a Lipschitz domain, then we have $H_0^r(\Omega)=H
^r_{\overline{\Omega}}(\R^\dimens)$ when $r>-1/2$, $r\notin \{1/2, 3/2, \dotso\}$,  $\widetilde{H}^r(\Omega)=H_{\overline{\Omega}}^r(\R^\dimens)$, $X=H^r(\Omega_e)$ and $X^*=H^{-r}_{\overline{\Omega}_e}(\R^\dimens)$. Thus for more regular domains it could be more convenient to work with the spaces~$H_{\overline{\Omega}}^r(\R^\dimens)$, but in this article we do not assume any regularity of the set~$\Omega$. For more theory of Sobolev spaces on (non-Lipschitz) domains and their properties, see \cite{CWHM-sobolev-spaces-on-non-lipchtiz-domains, ML-strongly-elliptic-systems}. 

We also use some properties of singular potentials which were introduced in \cite{RS-fractional-calderon-low-regularity-stability}. Let $t\geq 0$ and define $Z^{-t}(U)$ as a subspace of distributions $\distr(U)$ equipped with the norm
$$
\aabs{f}_{Z^{-t}(U)}=\sup\{\abs{\ip{f}{u_1u_2}_U}: u_i\in C_c^{\infty}(U), \ \aabs{u_i}_{H^t(\R^\dimens)}=1\}\;,
$$                              
where $\ip{\cdot}{\cdot}_{U}$ is the dual pairing. We denote by $Z_0^{-t}(U)$ the closure of $C_c^{\infty}(U)$ in $Z^{-t}(U)$.
Elements in $Z^{-t}(\R^\dimens)$ can be seen as multipliers: every $f\in Z^{-t}(\R^\dimens)$ induces a map $m_f\colon H^t(\R^\dimens)\rightarrow H^{-t}(\R^\dimens)$ defined as $\ip{m_f(u)}{ v}_{\R^\dimens}=\ip{f}{uv}_{\R^\dimens}$. Also $\abs{\ip{f}{uv}_{\R^\dimens}}\leq\aabs{f}_{Z^{-t}(\R^\dimens)}\aabs{u}_{H^t(\R^\dimens)}\aabs{v}_{H^{t}(\R^\dimens)}$, and this inequality can be seen as a motivation for the definition of the space $Z^{-t}(\R^\dimens)$. Clearly we have $Z_0^{-t}(\R^\dimens)\subset Z^{-t}(\R^\dimens)$. If $U$ is bounded, then $L^{\frac{\dimens}{2t}}(U)\subset Z_0^{-t}(\R^\dimens)$ for $0<t<\dimens/2$ and $L^{\infty}(U)\subset Z^{-t}_0(\R^\dimens)$ in the sense of zero extensions. Further, it holds that $L^p(U)\subset Z_0^{-t}(\R^\dimens)$ when $p>\max\{1, \dimens/2t\}$ (see section \ref{sec:magneticschrodinger}). We will only need these basic inclusions. For a more detailed treatment of the space of singular potentials~$Z^{-t}(U)$, see~\cite{MS-theory-of-sobolev-multipliers, RS-fractional-calderon-low-regularity-stability}.

\section{Unique continuation property and Poincar\'e inequality}
\subsection{Unique continuation results}
\label{subsec:uniquecontinuationresults}
In this section, we prove theorem \ref{thm:uniquecontinuationoffractionallaplacian} and give several other unique continuation results for fractional Laplacians and Riesz potentials in inhomogeneous and homogeneous Sobolev spaces. Even though we do not need all the results to solve the inverse problems considered in this article, we still state those variants since they are not given in earlier literature to the best of our knowledge. The strategy to prove results in this chapter is straightforward: if something is true for $\fraclaplace$ when $s\in (0, 1)$, then by the splitting $\fraclaplace=(-\Delta)^k(-\Delta)^{s-k}$ it should also be true for all powers~$s$ whenever the operations and claims are meaningful.

First we need a basic lemma for polyharmonic distributions, i.e. distributions which satisfy $(-\Delta)^k g=0$ for some integer $k \in \N$. We sketch the proof since it reflects the method of reduction we repeatedly use in this section.

\begin{lemma}
\label{lemma:polyharmoniccontinuation}
Let $V\subset \R^\dimens$ be any nonempty open set. If $g\in\distr(\R^\dimens)$ satisfies $(-\Delta)^kg=0$ and $g|_V=0$ for some $k\in\N$, then $g=0$.
\end{lemma}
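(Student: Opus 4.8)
The plan is to use the classical fact that a distribution on $\R^n$ which is polyharmonic on all of $\R^n$ is in fact a (harmonic, hence smooth) polynomial, together with the ordinary unique continuation / identity theorem for real-analytic functions. First I would recall that $(-\Delta)^k g = 0$ in $\distr(\R^\dimens)$ forces $g$ to be a smooth function: this is elliptic regularity, since $(-\Delta)^k$ is an elliptic operator with constant (analytic) coefficients, so by elliptic (indeed analytic-)hypoellipticity $g\in C^\infty(\R^\dimens)$, and in fact $g$ is real-analytic on $\R^\dimens$. Then, since $g$ is real-analytic and vanishes on the nonempty open set $V$, the identity theorem for real-analytic functions on the connected set $\R^\dimens$ gives $g\equiv 0$.

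Alternatively, and perhaps more in the spirit of ``reduction to the $s\in(0,1)$ case'' announced in the section, I would argue via the Fourier transform: $(-\Delta)^k g = 0$ means $\abs{\xi}^{2k}\hat g(\xi) = 0$ in $\tempered(\R^\dimens)$, so $\hat g$ is supported at the origin, hence $\hat g$ is a finite linear combination of derivatives of the Dirac delta, and therefore $g$ is a polynomial. A nonzero polynomial cannot vanish on a nonempty open set $V\subset\R^\dimens$ (its zero set has empty interior), so $g = 0$. This second route has the mild gap that it presupposes $g\in\tempered(\R^\dimens)$; but since the statement is about $g\in\distr(\R^\dimens)$, one first restricts attention to a large ball and uses that locally $g$ agrees with a tempered distribution, or simply invokes the structure theorem for distributions solving a constant-coefficient elliptic equation globally.

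The only real point requiring care — the ``main obstacle,'' such as it is — is justifying that a global distributional solution of $(-\Delta)^k g = 0$ on $\R^\dimens$ is genuinely a polynomial (or at least real-analytic), rather than merely locally so; this is where one leans on analytic hypoellipticity of $(-\Delta)^k$ (or the Fourier-support argument above). Everything after that is the standard real-analytic unique continuation, which is elementary. Since the excerpt explicitly says the authors only sketch this lemma ``since it reflects the method of reduction we repeatedly use,'' I expect the intended proof to be the short Fourier-side one: $\abs{\xi}^{2k}\hat g = 0 \Rightarrow \spt\hat g\subset\{0\}\Rightarrow g$ is a polynomial $\Rightarrow g|_V = 0$ forces $g\equiv 0$.
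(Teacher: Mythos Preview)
Your first approach (analytic hypoellipticity of $(-\Delta)^k$ followed by the identity theorem for real-analytic functions) is correct, and the paper in fact mentions exactly this as an alternative route. However, the paper's primary proof is different from both of your suggestions: it proceeds by induction on $k$. The base case $k=1$ is the classical fact that harmonic distributions are analytic; for the step, one writes $(-\Delta)^{k+1}g = (-\Delta)^k((-\Delta)g)$ and uses that $(-\Delta)$ is local, so $(-\Delta)g|_V=0$ as well, then applies the induction hypothesis to $(-\Delta)g$ and finally the harmonic case to $g$. This is the ``method of reduction'' the paper alludes to in the preamble to the lemma --- the same splitting-off-an-integer-power trick that drives the later UCP proofs --- so your guess that the intended proof is the Fourier-side one misreads that hint.

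As for your Fourier argument: you correctly flag the gap that $g$ is only assumed in $\distr(\R^\dimens)$, not $\tempered(\R^\dimens)$, so $\hat g$ need not exist. Your proposed patches (restrict to a large ball, or invoke a structure theorem) do not actually close this; restricting to a ball loses the global equation, and the ``structure theorem'' you invoke is essentially analytic hypoellipticity again, which collapses this route back into your first approach. So the Fourier-side argument, as written, is not a standalone proof here.
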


\begin{proof}
The proof is by induction. The case $k=1$ is true since harmonic distributions are harmonic functions and therefore analytic \cite{MI:distribution-theory}. Assume that the lemma holds for some $k=m\in\N$. If $(-\Delta)^{m+1}g=0$ and $g|_V=0$, then $(-\Delta)^{m}((-\Delta)g)=0$ and $(-\Delta)g|_V=0$ since $(-\Delta)$ is a local operator. The induction assumption implies $(-\Delta)g=0$, and since also $g|_V=0$, we obtain $g=0$ by harmonicity. This implies the claim. Alternatively one could use the fact that polyharmonic distributions are analytic \cite[Theorem 7.30]{MI:distribution-theory}.
\end{proof}

Now we can prove theorem \ref{thm:uniquecontinuationoffractionallaplacian}. The idea is to reduce the general case back to the one where $s\in (0, 1)$ and use the UCP proved in \cite{GSU-calderon-problem-fractional-schrodinger}. Note that the corresponding UCP cannot hold for local operators such as $(-\Delta)^k$ when $k\in\N$. Therefore we have to assume that $s\in\R\setminus\Z$. For the proof of the case $s\in (0, 1)$, see \cite[Theorem 1.2]{GSU-calderon-problem-fractional-schrodinger}.

\begin{proof}[Proof of theorem \ref{thm:uniquecontinuationoffractionallaplacian}.]
Because of our assumptions for~$u$, the fractional Laplacian $\fraclaplace u$ for $s\in (-\dimens/2, \infty)\setminus\Z$ is well-defined, see section \ref{sec:sobolevspacesandfractionallaplacian}. Assume that $k-1<s<k$ for some $k\in\N$. Now we can split $(-\Delta)^s u=(-\Delta)^{s-(k-1)}((-\Delta)^{k-1}u)$ where $s-(k-1)\in (0, 1)$. Since the operator $(-\Delta)^{k-1}$ is local, we obtain $(-\Delta)^{s-(k-1)}((-\Delta)^{k-1}u)|_V=0$ and $(-\Delta)^{k-1}u|_V=0$ where $(-\Delta)^{k-1}u\in H^{r-2(k-1)}(\R^\dimens)$. By the UCP of $(-\Delta)^{s-(k-1)}$, we have $(-\Delta)^{k-1}u=0$. Since $u$ is polyharmonic and $u|_V=0$, lemma \ref{lemma:polyharmoniccontinuation} implies $u=0$.

If $-\dimens/2<s<0$, $s\not\in\Z$, choose $k\in\N$ such that $k+s>0$. Then by the locality of $(-\Delta)^k$ we obtain $(-\Delta)^{k+s}u|_V=0$ and $u|_V=0$. The first part of the proof implies the claim.
\end{proof}

Note that theorem \ref{thm:uniquecontinuationoffractionallaplacian} implies UCP for equations of the type $\fraclaplace u+ Lu=0$ where $L$ is any local operator. Especially, this holds if $L=P(x, D)$ where
$$
P(x, D)=\sum_{|\alpha|\leq m}a_{\alpha}(x)D^{\alpha}
$$
is a differential operator of order $m$.

The following unique continuation result of Riesz potentials was presented in \cite{IM-unique-continuation-riesz-potential}. We use it to show uniqueness for partial data problems of the $d$-plane transform in section~\ref{sec:applicationstointegralgeometry}. We recall the short proof since it relies on the UCP of the fractional Laplacian.

\begin{corollary}
\label{cor:uniquecontinuationofriespotential}
Let $\alpha\in\R$ such that $0<\alpha<\dimens$ and $(\alpha-\dimens)/2\in\R\setminus\Z$. Let $f\in \rapidly(\R^\dimens)$ and $V\subset\R^\dimens$ some nonempty open set. If $\riesz f|_V=0$ and $f|_V=0$, then $f=0$.
\end{corollary}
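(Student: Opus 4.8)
The plan is to reduce the statement to Theorem~\ref{thm:uniquecontinuationoffractionallaplacian} by recognizing the Riesz potential as a fractional Laplacian of negative order. Set $s := (\alpha-n)/2$. Since $0<\alpha<n$ we have $s\in(-n/2,0)$, and the hypothesis $(\alpha-n)/2\in\R\setminus\Z$ says precisely that $s\notin\Z$. As recalled in section~\ref{sec:sobolevspacesandfractionallaplacian}, for $f\in\rapidly(\R^\dimens)$ one has $\riesz f=f\ast\kernel$ with $\kernel(x)=\abs{x}^{-\alpha}$, and since $\widehat{\kernel}=c_{n,\alpha}\abs{\cdot}^{\alpha-n}$ for a nonzero constant $c_{n,\alpha}$, the convolution formula $\widehat{f\ast g}=\hat f\hat g$ (valid for $f\in\rapidly(\R^\dimens)$, $g\in\tempered(\R^\dimens)$) gives $\riesz f=c_{n,\alpha}\,\ifourier(\abs{\cdot}^{2s}\hat f)=c_{n,\alpha}\,(-\Delta)^{s}f$ as tempered distributions.

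First I would check that $(-\Delta)^{s}f$ is well-defined. This is exactly the situation discussed in section~\ref{sec:sobolevspacesandfractionallaplacian}: $s>-n/2$ and $\langle\cdot\rangle^t\hat f\in L^\infty(\R^\dimens)$ for some $t\in\R$ because $f\in\rapidly(\R^\dimens)$, hence $\abs{\cdot}^{2s}\hat f\in\tempered(\R^\dimens)$ and $(-\Delta)^{s}f=\ifourier(\abs{\cdot}^{2s}\hat f)\in\tempered(\R^\dimens)$. Moreover $\rapidly(\R^\dimens)\subset\bigcup_{r\in\R}H^r(\R^\dimens)$, so $f\in H^r(\R^\dimens)$ for some $r\in\R$.

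Next, from the hypotheses $\riesz f|_V=0$ and $f|_V=0$, the identity above together with $c_{n,\alpha}\neq 0$ yields $(-\Delta)^{s}f|_V=0$ and $f|_V=0$. Now I would apply Theorem~\ref{thm:uniquecontinuationoffractionallaplacian}: when $s\in(-n/4,0)\setminus\Z$ we invoke the first part with $u=f\in H^r(\R^\dimens)$, and when $s\in(-n/2,-n/4]\setminus\Z$ we invoke the second part, which applies because $f\in\rapidly(\R^\dimens)$. In either case we conclude $f=0$.

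There is no serious obstacle here; the content of the corollary is really just a dictionary entry between Riesz potentials and negative-order fractional Laplacians. The only points requiring a little care are verifying that the Fourier-side constant $c_{n,\alpha}$ is nonzero (so that vanishing of $\riesz f$ on $V$ transfers to vanishing of $(-\Delta)^{s}f$ on $V$) and matching the regularity hypothesis $f\in\rapidly(\R^\dimens)$ to the appropriate clause of Theorem~\ref{thm:uniquecontinuationoffractionallaplacian} across the full range $s\in(-n/2,0)\setminus\Z$.
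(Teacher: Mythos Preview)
Your proof is correct and follows essentially the same approach as the paper: both identify $\riesz$ with a negative-order fractional Laplacian and reduce to Theorem~\ref{thm:uniquecontinuationoffractionallaplacian}. The only cosmetic difference is that the paper explicitly applies $(-\Delta)^k$ with $k$ large enough to lift to a positive exponent $k-s>0$ before invoking the theorem, whereas you directly quote the negative-exponent clause of Theorem~\ref{thm:uniquecontinuationoffractionallaplacian} (whose proof performs that same lifting internally).
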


\begin{proof}
Recall that $f\in H^r(\R^\dimens)$ for some $r\in\R$. We can write $\riesz f=(-\Delta)^{-s} f$ where $s=(\dimens-\alpha)/2$. Choose $k\in\N$ such that $k-s>0$. By locality of $(-\Delta)^k$ we obtain the conditions $(-\Delta)^{k-s}f|_V=0$ and $f|_V=0$.
Theorem~\ref{thm:uniquecontinuationoffractionallaplacian} implies $f=0$.
\end{proof}

It is also independently proved in \cite{IM-unique-continuation-riesz-potential}, without using the UCP of~$\fraclaplace$, that if $f\in\cdistr(\R^\dimens)$, then one can replace the condition $\riesz f|_V=0$ by the requirement $\partial^{\beta}(\riesz f)(x_0)=0$ for some $x_0\in V$ and all $\beta\in\N_0^\dimens$. In fact, this can be used to prove a slightly stronger result for $\fraclaplace$ in the case of compact support.

\begin{corollary}
\label{cor:stronguniquecontinuation}
Let $u\in\cdistr(\R^\dimens)$, $V\subset\R^\dimens$ some nonempty open set and $s\in (-\dimens/2, \infty)\setminus\Z$. If $\partial^{\beta}(\fraclaplace u)(x_0)=0$ and $u|_V=0$ for some $x_0\in V$ and all $\beta\in\N_0^\dimens$, then $u=0$.
\end{corollary}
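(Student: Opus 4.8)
The plan is to mimic the reduction used in the proof of Theorem \ref{thm:uniquecontinuationoffractionallaplacian}, but replacing the final application of that theorem by the stronger Riesz-potential statement recalled from \cite{IM-unique-continuation-riesz-potential} (the one that only requires $\riesz f$ and all its derivatives to vanish at a single point of $V$). First I would dispose of the sign of $s$: choose $k\in\N$ with $k+s>0$, so that $(-\Delta)^{k+s}u = (-\Delta)^k(\fraclaplace u)$ makes sense (recall $(-\Delta)^k(\fraclaplace u) = (-\Delta)^{k+s}u$ from section \ref{sec:sobolevspacesandfractionallaplacian}), and observe that since $(-\Delta)^k$ is a local differential operator, $\partial^\beta((-\Delta)^{k+s}u)(x_0) = 0$ for all $\beta$ whenever $\partial^\beta(\fraclaplace u)(x_0)=0$ for all $\beta$; also $u|_V = 0$ is untouched. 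So we may assume $s>0$ from the start, with $s\notin\Z$.

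Next, write $s = (k-1) + \sigma$ with $k\in\N$ and $\sigma\in(0,1)$, and set $w = (-\Delta)^{k-1}u\in\cdistr(\R^\dimens)$ (compact support is preserved by the local operator $(-\Delta)^{k-1}$). Then $(-\Delta)^\sigma w = \fraclaplace u$, and since $\sigma\in(0,1)$ we have $\sigma = (n-\alpha)/2$ for $\alpha = n-2\sigma\in(n-2,n)\subset(0,n)$, so $(-\Delta)^\sigma w = \riesz w$ up to a nonzero constant. Now $\partial^\beta(\riesz w)(x_0) = 0$ for all $\beta\in\N_0^\dimens$ (the constant is harmless), and $w|_V = (-\Delta)^{k-1}u|_V = 0$ by locality. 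The cited strong-vanishing version of the UCP for Riesz potentials on $\cdistr(\R^\dimens)$ then gives $w = (-\Delta)^{k-1}u = 0$. Finally, $u$ is polyharmonic and $u|_V = 0$, so Lemma \ref{lemma:polyharmoniccontinuation} yields $u = 0$.

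The only genuinely nontrivial ingredient is the strong (one-point, infinite-order vanishing) unique continuation result for Riesz potentials of compactly supported distributions, which is imported from \cite{IM-unique-continuation-riesz-potential} rather than proved here; everything else is the same locality-plus-polyharmonic-continuation bookkeeping used for Theorem \ref{thm:uniquecontinuationoffractionallaplacian}. A minor point to check is that $\riesz$ applied to a compactly supported distribution is a well-defined tempered distribution whose restriction to $V$ is smooth near $x_0$ (so that "$\partial^\beta(\riesz w)(x_0)=0$" is meaningful) — this follows because $\riesz w = w * \kernel$ with $w\in\cdistr$ and $\kernel\in L^1_{loc}$, and $\riesz w$ is smooth away from $\spt(w)$; since we may shrink $V$ so that it avoids $\spt(w)\cap V$ being an obstruction (indeed $w|_V=0$ means $\spt(w)\cap V=\varnothing$), $\riesz w$ is $C^\infty$ in a neighbourhood of $x_0$. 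No serious obstacle is expected beyond correctly invoking the imported result.
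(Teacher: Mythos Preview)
Your overall strategy is the same as the paper's, but the key identification step contains a sign error that breaks the argument. You claim that for $\sigma\in(0,1)$ and $\alpha=n-2\sigma$ one has $(-\Delta)^\sigma w=\riesz w$ up to a constant. With the paper's convention (section~\ref{sec:sobolevspacesandfractionallaplacian}), $\riesz=(-\Delta)^{-(\dimens-\alpha)/2}$; so your choice of $\alpha$ gives $\riesz=(-\Delta)^{-\sigma}$, not $(-\Delta)^{+\sigma}$. A positive fractional power of the Laplacian is not a Riesz potential, and the imported result from \cite{IM-unique-continuation-riesz-potential} is stated for $\riesz$, so it does not apply to $(-\Delta)^\sigma w$.

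The repair is to shift by one more Laplacian, which is exactly what the paper does. For $k-1<s<k$ write $\fraclaplace=(-\Delta)^k(-\Delta)^{s-k}$, where now $s-k\in(-1,0)$ and $(-\Delta)^{s-k}=\riesz$ (up to a constant) with $\alpha=n+2s-2k\in(n-2,n)$. Set $f=(-\Delta)^k u\in\cdistr(\R^\dimens)$; since the Riesz potential commutes with derivatives, $\partial^\beta(\fraclaplace u)=\partial^\beta(\riesz f)$, so all derivatives of $\riesz f$ vanish at $x_0$, and $f|_V=0$ by locality of $(-\Delta)^k$. Now \cite[Theorem 1.1]{IM-unique-continuation-riesz-potential} yields $(-\Delta)^k u=0$, and you finish with Lemma~\ref{lemma:polyharmoniccontinuation} exactly as you planned. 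Your reduction of the case $s<0$ to $s>0$ is correct and matches the paper.
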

\begin{proof}
Let $k-1<s<k$ where $k\in\N$. Now $\fraclaplace=(-\Delta)^k(-\Delta)^{s-k}=(-\Delta)^k\riesz$ where $\alpha=\dimens+2s-2k\in (n-2, n)$. Furthermore, $\partial^{\beta}\fraclaplace u=\partial^{\beta}\riesz(-\Delta)^k u$ since the Riesz potential commutes with derivatives. By the locality of $(-\Delta)^k$ we obtain the conditions $\partial^{\beta}(\riesz(-\Delta)^k u)(x_0)=0$ and $(-\Delta)^k u|_V=0$ where $(-\Delta)^k u\in\cdistr(\R^\dimens)$. By \cite[Theorem 1.1]{IM-unique-continuation-riesz-potential}, we must have $(-\Delta)^k u=0$. Since also $u|_V=0$, we obtain $u=0$ by lemma \ref{lemma:polyharmoniccontinuation}. 

Let then $s\in (-\dimens/2, 0)$, $s\not\in\Z$, and pick $k\in\N$ such that $s+k>0$. All the derivatives $\partial^{\beta}(\fraclaplace u)(x_0)$ vanish, and hence $((-\Delta)^k\partial^{\beta})(\fraclaplace u)(x_0)=0$. Now $((-\Delta)^k\partial^{\beta})(\fraclaplace u)=\partial^{\beta}((-\Delta)^{s+k} u)$ and we get the conditions $\partial^{\beta}((-\Delta)^{s+k} u)(x_0)=0$ and $u|_V=0$. The first part of the proof gives the claim.
\end{proof}

The UCP of $\fraclaplace$ also extends to homogeneous Sobolev spaces. The following result is a simple consequence of theorem \ref{thm:uniquecontinuationoffractionallaplacian}. See \cite{FF-unique-continuation-fractional-ellliptic-equations, FF-unique-continuation-higher-laplacian} for related results (strong UCP and measurable UCP in some special cases).

\begin{corollary}
Let $s\in\R^+\setminus\Z$ and $u\in \dot{H}^{r}(\R^\dimens)$, $r<\dimens/2$. If $(-\Delta)^s u|_V=0$ and $u|_V=0$ for some nonempty open set $V\subset\R^\dimens$, then $u=0$.
\end{corollary}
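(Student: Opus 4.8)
The plan is to reduce the statement to Theorem~\ref{thm:uniquecontinuationoffractionallaplacian} by combining the splitting trick used throughout this section with the inclusion $\dot{H}^{\rho}(\R^\dimens)\subset H^{\rho}(\R^\dimens)$ valid for $\rho<0$ (recorded in section~\ref{sec:sobolevspacesandfractionallaplacian}). Since $\fraclaplace$ is nonlocal for noninteger exponents, the reduction must be effected by an \emph{integer} power of the Laplacian, which is local and hence transports the vanishing hypotheses on $V$.

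Concretely, I would fix $k\in\N$ large enough that $r-2k<0$ and set $w=(-\Delta)^k u$. Because $r<\dimens/2$, the operator $(-\Delta)^k\colon\dot{H}^r(\R^\dimens)\to\dot{H}^{r-2k}(\R^\dimens)$ is continuous (section~\ref{sec:sobolevspacesandfractionallaplacian}), so $w\in\dot{H}^{r-2k}(\R^\dimens)\subset H^{r-2k}(\R^\dimens)=:H^{\rho}(\R^\dimens)$ with $\rho\in\R$. On the Fourier side $\hat w=\abs{\cdot}^{2k}\hat u$, so $w$ coincides with the distributional polyharmonic of $u$, and since $(-\Delta)^k$ is local the hypothesis $u|_V=0$ forces $w|_V=0$. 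Likewise $(-\Delta)^s w=(-\Delta)^k\big((-\Delta)^s u\big)$ (both sides have Fourier transform $\abs{\cdot}^{2(s+k)}\hat u$), so locality of $(-\Delta)^k$ together with $(-\Delta)^s u|_V=0$ gives $(-\Delta)^s w|_V=0$. As $s\in\R^+\setminus\Z\subset(-\dimens/4,\infty)\setminus\Z$ is unchanged, Theorem~\ref{thm:uniquecontinuationoffractionallaplacian} applies to $w$ and yields $w=0$, i.e. $\abs{\cdot}^{2k}\hat u=0$ as a tempered distribution. Finally, by the very definition of $\dot{H}^r(\R^\dimens)$ the Fourier transform $\hat u$ lies in $L^1_{loc}(\R^\dimens)$, hence is an honest locally integrable function; from $\abs{\xi}^{2k}\hat u(\xi)=0$ we get $\hat u(\xi)=0$ for a.e.\ $\xi\neq 0$, thus $\hat u=0$ a.e.\ and $u=0$.

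The only point requiring a little care is the compatibility of the various definitions of $\fraclaplace$ at play — the one on $\cschwartz(\R^\dimens)$/$\dot{H}^{\bullet}(\R^\dimens)$, the one on $\schwartz(\R^\dimens)$/$H^{\bullet}(\R^\dimens)$, and the distributional Fourier multiplier — but all of them agree since each is multiplication by $\abs{\cdot}^{2s}$ (resp.\ $\abs{\cdot}^{2k}$) on the Fourier side, and $\dot{H}^{r-2k}(\R^\dimens)\subset\tempered(\R^\dimens)$ by definition, so the identifications used above are legitimate. I do not expect a genuine obstacle here: the homogeneous case is strictly softer than the inhomogeneous one already handled in Theorem~\ref{thm:uniquecontinuationoffractionallaplacian}, and the case $r<0$ is in fact immediate since then $u\in\dot{H}^r(\R^\dimens)\subset H^r(\R^\dimens)$ already.
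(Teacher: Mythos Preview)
Your argument is essentially the same as the paper's: apply an integer power $(-\Delta)^k$ to push $u$ into $\dot{H}^{r-2k}\subset H^{r-2k}$, use locality of $(-\Delta)^k$ to transport both vanishing conditions to $w=(-\Delta)^k u$, and invoke Theorem~\ref{thm:uniquecontinuationoffractionallaplacian}. The only difference is in the final step: from $(-\Delta)^k u=0$ the paper concludes $u=0$ via the polyharmonic Lemma~\ref{lemma:polyharmoniccontinuation} together with $u|_V=0$, whereas you exploit the built-in condition $\hat u\in L^1_{loc}$ from the definition of $\dot H^r$ to deduce $\hat u=0$ a.e.\ directly --- both are valid and equally short.
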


\begin{proof}
If $r<0$, then $u\in H^r(\R^\dimens)$ and the claim follows from theorem \ref{thm:uniquecontinuationoffractionallaplacian}. Let $r>0$ and choose $k\in\N$ such that $r-2k<0$. Now $(-\Delta)^k\fraclaplace=\fraclaplace (-\Delta)^k$ holds in $\cschwartz(\R^\dimens)$ so by the density of $\cschwartz(\R^\dimens)$ and the locality of $(-\Delta)^k$ we obtain $\fraclaplace((-\Delta)^k u)|_V=0$ and $(-\Delta)^k u|_V=0$, where $(-\Delta)^k u\in\dot{H}^{r-2k}(\R^\dimens)\subset H^{r-2k}(\R^\dimens)$. Hence $(-\Delta)^k u=0$ by theorem \ref{thm:uniquecontinuationoffractionallaplacian} and since $u|_V=0$ we obtain $u=0$ by lemma \ref{lemma:polyharmoniccontinuation}. 
\end{proof}

Since $(-\Delta)^k(-\Delta)^{-s}=(-\Delta)^{k-s}$ also holds by the density of $\cschwartz(\R^\dimens)$, one can reduce the case of negative exponents to the case of positive exponents. Thus one obtains the corresponding UCP for the Riesz potential~$\riesz$ in $\dot{H}^r(\R^\dimens)$ where $r<\alpha-\dimens/2$. By the Sobolev embedding theorem we obtain the following unique continuation result for Bessel potential spaces when $1\leq p\leq 2$.

\begin{corollary}
\label{cor:besselucp}
Let $s\in\R^+\setminus\Z$, $1\leq p\leq 2$ and $u\in H^{r, p}(\R^\dimens)$, $r\in\R$. If $(-\Delta)^s u|_V=0$ and $u|_V=0$ for some nonempty open set $V\subset\R^\dimens$, then $u=0$.
\end{corollary}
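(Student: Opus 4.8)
The plan is to reduce the statement to the already-established UCP in the homogeneous Sobolev space $\dot H^r(\R^\dimens)$ (the corollary immediately preceding this one) by showing that, under the hypothesis $1 \leq p \leq 2$, any $u \in H^{r,p}(\R^\dimens)$ lands in some homogeneous Sobolev space $\dot H^{\rho}(\R^\dimens)$ with $\rho < \dimens/2$. First I would recall that $H^{r,p}(\R^\dimens) \hookrightarrow H^{r,2}(\R^\dimens) = H^r(\R^\dimens)$ is false in general for $p<2$, so one must instead use a genuine Sobolev embedding that lowers the smoothness index in exchange for raising the integrability: the embedding $H^{r,p}(\R^\dimens) \hookrightarrow H^{\rho,2}(\R^\dimens)$ with $\rho = r - \dimens(1/p - 1/2) \leq r$, valid because $1 \leq p \leq 2$. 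Thus $u \in H^{\rho}(\R^\dimens)$ for this (possibly very negative) $\rho \in \R$, and in particular $u \in \dot H^{\rho'}(\R^\dimens)$ for any $\rho' \leq \rho$ with $\rho' < \dimens/2$, e.g. $\rho' = \min\{\rho, 0\}$.

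The second ingredient is to check that the two hypotheses of the preceding corollary are meaningful and satisfied. Since $u \in H^\rho(\R^\dimens) \subset \bigcup_{r'\in\R} H^{r'}(\R^\dimens)$, the fractional Laplacian $\fraclaplace u = \ifourier(\abs{\cdot}^{2s}\hat u)$ is a well-defined tempered distribution for $s \in \R^+\setminus\Z$ (indeed $s \geq 0$ suffices here, as discussed in section~\ref{sec:sobolevspacesandfractionallaplacian}), and $\fraclaplace u \in H^{\rho - 2s}(\R^\dimens)$. The hypotheses $\fraclaplace u|_V = 0$ and $u|_V = 0$ are exactly those of the preceding corollary, now applied with the space $\dot H^{\rho'}(\R^\dimens)$, $\rho' < \dimens/2$. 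That corollary then yields $u = 0$ directly. Alternatively, one can bypass the homogeneous spaces entirely and simply invoke theorem~\ref{thm:uniquecontinuationoffractionallaplacian} with the index $\rho \in \R$ once one observes $u \in H^\rho(\R^\dimens)$; both routes are essentially the same.

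The only potential obstacle is making sure the Sobolev embedding used is the correct one and genuinely requires $p \leq 2$ (rather than $p \geq 2$): for $p \leq 2$ functions are "more integrable" than $L^2$ locally but the embedding into $L^2$-based spaces costs derivatives, whereas for $p > 2$ one would gain derivatives but the embedding would go the wrong way, so the restriction $1 \leq p \leq 2$ is exactly what is needed. One should also note the edge case $p = 1$ is already covered more directly by the embedding $H^{r,1}(\R^\dimens) \hookrightarrow H^{(2r-n-\epsilon)/2}(\R^\dimens)$ recorded in section~\ref{sec:sobolevspacesandfractionallaplacian}, consistent with (indeed a special instance of) the general embedding above. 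No serious estimate is involved; the content is purely the chaining of a standard embedding with the previously proved UCP.
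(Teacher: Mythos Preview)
Your proposal is correct and follows essentially the same route as the paper: embed $H^{r,p}(\R^\dimens)$ into some $H^\rho(\R^\dimens)$ via the Sobolev embedding with $\rho=r-\dimens(1/p-1/2)$ and then invoke theorem~\ref{thm:uniquecontinuationoffractionallaplacian}. The paper treats $p=1$ separately (using $\fourier:L^1\to L^\infty$ rather than the Sobolev embedding, which in the cited form requires $p>1$), and it goes directly to theorem~\ref{thm:uniquecontinuationoffractionallaplacian} without the detour through homogeneous spaces; your alternative route noting this is exactly what the paper does.
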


\begin{proof}
If $p=1$, then $\ifourier(\langle\cdot\rangle^r \hat{u})\in L^1(\R^\dimens)$ which implies $\langle\cdot\rangle^r \hat{u}\in L^{\infty}(\R^\dimens)$ since $\fourier\colon L^1(\R^\dimens)\rightarrow L^{\infty}(\R^\dimens)$ is continuous. Hence $u\in H^t(\R^\dimens)$ for some $t\in\R$ and the claim follows from theorem~\ref{thm:uniquecontinuationoffractionallaplacian}. Let then $1<p\leq 2$. By the Sobolev embedding theorem (see e.g. \cite[Theorem 6.5.1]{BL-interpolation-spaces}) $H^{r, p}(\R^\dimens)\hookrightarrow H^{r_1, p_1}(\R^\dimens)$ when $r_1\leq r$, $1< p\leq p_1<\infty$ and
$$
r-\frac{\dimens}{p}= r_1-\frac{\dimens}{p_1}.
$$
Choose $p_1=2$. Then for any $1<p\leq 2$ the previous equality holds when
$$
r_1=\frac{2rp+\dimens(p-2)}{2p}\leq r.
$$
Hence $u\in H^{r_1, 2}(\R^\dimens)=H^{r_1}(\R^\dimens)$ and by theorem \ref{thm:uniquecontinuationoffractionallaplacian} we obtain $u=0$.
\end{proof}

For higher exponents $p$, we can prove the following version of unique continuation considering the Fourier transform.

\begin{corollary}
Let $r\geq 0$, $2\leq p<\infty$ and $s\in\R^+\setminus\Z$. Let $u\in H^{r, p}(\R^\dimens)$ and $V\subset\R^\dimens$ some nonempty open set. If $\fraclaplace\hat{u}|_V=0$ and $\hat{u}|_V=0$, then $u=0$.
\end{corollary}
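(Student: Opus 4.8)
The plan is to reduce the statement to the unique continuation property already established on the $L^2$-based Sobolev scale, namely Theorem \ref{thm:uniquecontinuationoffractionallaplacian}, applied to the tempered distribution $\hat u$ in place of $u$. Concretely, once we know that $\hat u$ belongs to some $H^{\rho}(\R^\dimens)$ with $\rho\in\R$, then, since $s\in\R^+\setminus\Z$ lies in particular in $(-\dimens/4,\infty)\setminus\Z$, the fractional Laplacian $\fraclaplace(\hat u)=\ifourier(\abs{\cdot}^{2s}\widehat{\hat u})$ is a well-defined tempered distribution in the sense of section \ref{sec:sobolevspacesandfractionallaplacian}, which coincides with the object $\fraclaplace\hat u$ appearing in the hypothesis. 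The assumptions $\fraclaplace\hat u|_V=0$ and $\hat u|_V=0$ then force $\hat u=0$ by Theorem \ref{thm:uniquecontinuationoffractionallaplacian}, and Fourier inversion on $\tempered(\R^\dimens)$ yields $u=\ifourier(\hat u)=0$.

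So the only real work is to place $\hat u$ in some $H^{\rho}(\R^\dimens)$. First I would use that $r\geq 0$ and $p\geq 2>1$: by the Mikhlin multiplier theorem the Fourier multiplier with symbol $\langle\cdot\rangle^{-r}$ is bounded on $L^p(\R^\dimens)$, hence $H^{r,p}(\R^\dimens)\hookrightarrow L^p(\R^\dimens)$ and $u\in L^p(\R^\dimens)$. Next, since $p\geq 2$, H\"older's inequality with exponents $p/2$ and $p/(p-2)$ gives $\langle\cdot\rangle^{\rho}u\in L^2(\R^\dimens)$ whenever $\langle\cdot\rangle^{2\rho p/(p-2)}$ is integrable, that is, for every $\rho<-\dimens(p-2)/(2p)$ (any $\rho\leq 0$ works when $p=2$). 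Finally, unwinding the definition of the $H^{\rho}$ norm and using $\widehat{\hat u}=(2\pi)^\dimens u(-\cdot)$ together with Plancherel, the condition $\langle\cdot\rangle^{\rho}u\in L^2(\R^\dimens)$ is precisely $\hat u\in H^{\rho}(\R^\dimens)$. Fixing one such $\rho$ completes the reduction.

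The nearest thing to an obstacle is exactly this first reduction. For $p>2$ the Fourier transform need not map $L^p$ into a space of functions, so one cannot invoke Hausdorff--Young in its naive form to land $\hat u$ in $L^{p'}$; what rescues the argument is that an $L^p$ function with $p\geq 2$ still lies in a polynomially weighted $L^2$ space, and on the Fourier side such weighted $L^2$ membership is exactly a (negative-order) $L^2$-Sobolev membership of $\hat u$. Everything after that is a formal computation combined with the nonlocal UCP of Theorem \ref{thm:uniquecontinuationoffractionallaplacian}; in particular, no new Carleman-type estimate is required.
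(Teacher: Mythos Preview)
Your proposal is correct and follows essentially the same route as the paper: reduce to $u\in L^p$, show that $\hat u$ lies in some $H^{\rho}(\R^\dimens)$ with $\rho\in\R$, and then invoke Theorem~\ref{thm:uniquecontinuationoffractionallaplacian}. The only difference is that the paper obtains $\hat u\in H^{-t}(\R^\dimens)$ for $t>n(1/2-1/p)$ by citing \cite[Theorem 7.9.3]{HO:analysis-of-pdos}, whereas you derive the identical threshold $\rho<-n(1/2-1/p)$ directly via H\"older's inequality and Plancherel; this makes your argument self-contained but is not a genuinely different approach.
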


\begin{proof}
By the inclusion $H^{r, p}(\R^\dimens)\hookrightarrow L^p(\R^\dimens)$ for $r\geq 0$, we can assume $u\in L^p(\R^\dimens)$. If $p=2$, then $\hat{u}\in L^2(\R^\dimens)$. By theorem \ref{thm:uniquecontinuationoffractionallaplacian}, we obtain $\hat{u}=0$ and hence $u=0$. If $2<p<\infty$, then we have that $\hat{u}\in H^{-t}(\R^\dimens)$ where $t>n(1/2-1/p)$ by \cite[Theorem 7.9.3]{HO:analysis-of-pdos}. Again we obtain $\hat{u}=0$ by theorem \ref{thm:uniquecontinuationoffractionallaplacian} and eventually $u=0$.
\end{proof}

Note that if $u$ has compact support, then by the Paley-Wiener theorem the condition $\hat{u}|_V=0$ already implies that $u=0$. 

\subsection{The fractional Poincar\'e inequality}
\label{subsec:poinacareinequality}
This subsection is dedicated to the proofs of a fractional Poincar\'e inequality. It serves the goal of estimating the $L^2$-norm of $u\in \widetilde{H}^s(\Omega)$ with that of its fractional Laplacian $(-\Delta)^{s/2} u$. We give five possible proofs for the fractional Poincar\'e inequality. We believe that giving several proofs will be helpful in subsequent works. This also illustrates some connections between methods which might have been unnoticed before.

The first proof is the most direct one and is based on splitting of frequencies on the Fourier side. The second proof utilizes several estimates (most importantly Hardy-Littlewood-Sobolev inequalities). This proof is motivated by the approach taken in \cite{GSU-calderon-problem-fractional-schrodinger}. Third proof uses a reduction argument to extend the inequality proved in \cite{CLR18} for all powers $s\geq 0$. Fourth proof is based on interpolation of homogeneous Sobolev spaces and it also gives an explicit constant in terms of the classical Poincar\'e constant. Fifth proof uses uncertainty inequalities which are treated in~\cite{FS-the-uncertainty-principle}.

We begin our first proof by dividing the Fourier side into high and low frequencies. We only use simple estimates in the proof. In this approach we also get a control on the Poincar\'e constant. The result is basically the same as \cite[Proposition 1.55]{BCD-fourier-analysis-nonlinear-pde}.

\begin{theorem}[Poincar\'e inequality]
\label{thm:poincarechineseguy}
Let $s\geq 0$, $K\subset\R^\dimens$ compact set and $u\in H^s_K(\R^\dimens)$. There exists a constant $c=c(n, K, s)> 0$ such that
\begin{equation}
\aabs{u}_{L^2(\R^\dimens)}\leq c\aabs{(-\Delta)^{s/2}u}_{L^2(\R^\dimens)}.
\end{equation}
\end{theorem}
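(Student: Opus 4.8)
The plan is to work entirely on the Fourier side and run the standard absorption argument in which the compactness of $K$ enters through a crude low-frequency estimate. Since $u\in H^s_K(\R^\dimens)$ with $s\geq 0$, we have $u\in L^2(\R^\dimens)$, and because $\spt u\subset K$ is compact we also get $u\in L^1(\R^\dimens)$ (by Cauchy--Schwarz on $K$); hence $\hat u$ is a bounded continuous function and all the pointwise and integral expressions below make sense. By Plancherel's theorem it suffices to bound $\int_{\R^\dimens}\abs{\hat u(\xi)}^2\der\xi$ by $\int_{\R^\dimens}\abs{\xi}^{2s}\abs{\hat u(\xi)}^2\der\xi$ (up to the Fourier normalization constant), the latter being finite since $\abs{\xi}^{2s}\leq\langle\xi\rangle^{2s}$ and $u\in H^s$.

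Next I would fix a radius $R>0$ and split the frequency integral over $\{\abs{\xi}\leq R\}$ and $\{\abs{\xi}>R\}$. On the high-frequency part one uses $\abs{\xi}^{-2s}\leq R^{-2s}$ (valid because $s\geq 0$), giving
\begin{equation}
\int_{\abs{\xi}>R}\abs{\hat u(\xi)}^2\der\xi\leq R^{-2s}\int_{\R^\dimens}\abs{\xi}^{2s}\abs{\hat u(\xi)}^2\der\xi .
\end{equation}
On the low-frequency part the compact support is exploited directly: $\abs{\hat u(\xi)}\leq\int_K\abs{u}\leq\abs{K}^{1/2}\aabs{u}_{L^2(\R^\dimens)}$ for every $\xi$, so integrating this over the ball of radius $R$ produces a term of the form $C(\dimens)\,\abs{K}\,R^\dimens\,\aabs{u}_{L^2(\R^\dimens)}^2$.

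Finally I would choose $R=R_0$ small enough that the coefficient $C(\dimens)\abs{K}R_0^\dimens$ in front of $\aabs{u}_{L^2(\R^\dimens)}^2$ is at most $1/2$, and absorb that term into the left-hand side. This yields the claimed inequality with an explicit constant $c=c(\dimens,K,s)$ which, up to dimensional factors, is of the order $\abs{K}^{s/\dimens}$. I do not expect a genuine obstacle: the only points requiring mild care are the bookkeeping of the Plancherel constant and the fact that the absorption step is legitimate precisely because $\aabs{u}_{L^2(\R^\dimens)}<\infty$ from the outset. The argument is essentially that of \cite[Proposition 1.55]{BCD-fourier-analysis-nonlinear-pde}, here specialized to functions in $H^s_K(\R^\dimens)$.
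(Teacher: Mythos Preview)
Your proposal is correct and follows essentially the same approach as the paper: a low/high frequency split on the Fourier side, the bound $\abs{\hat u(\xi)}\leq\abs{K}^{1/2}\aabs{u}_{L^2}$ on the low-frequency ball, the trivial $\abs{\xi}^{-2s}\leq R^{-2s}$ on high frequencies, and absorption of the small term by choosing $R$ so that $R^\dimens\abs{K}\abs{B(0,1)}<1$. The paper also records the same dependence $c\approx\abs{K}^{s/\dimens}$ and the same reference to \cite[Proposition~1.55]{BCD-fourier-analysis-nonlinear-pde}.
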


\begin{proof}
We divide the integration into high and low frequencies
\begin{align}
\aabs{u}_{L^2(\R^\dimens)}^2=\int_{\abs{\xi}\leq\epsilon}\abs{\hat{u}(\xi)}^2\der\xi+\int_{\abs{\xi}>\epsilon}\abs{\hat{u}(\xi)}^2\der\xi
\end{align}
where $\epsilon>0$ is determined later on. Let us analyze the first part. Since $u\in L^2(\R^\dimens)$ and has support in~$K$, H\"older's inequality implies
$$
\abs{\hat{u}(\xi)}\leq\aabs{u}_{L^1(\R^\dimens)}\leq\abs{K}^{1/2}\aabs{u}_{L^2(\R^\dimens)}.
$$
Thus we have
\begin{align*}
\int_{\abs{\xi}\leq\epsilon}\abs{\hat{u}(\xi)}^2\der\xi\leq\int_{\abs{\xi}\leq\epsilon}\abs{K}\aabs{u}_{L^2(\R^\dimens)}^2\der\xi =\epsilon^\dimens\abs{K}\abs{B(0, 1)}\aabs{u}_{L^2(\R^\dimens)}^2
\end{align*}
where $\abs{K}$ and $\abs{B(0, 1)}$ are the measures of~$K$ and the unit ball~$B(0, 1)$.
For high frequencies we can do the following trick
\begin{align*}
\int_{\abs{\xi}>\epsilon}\abs{\hat{u}(\xi)}^2\der\xi=\int_{\abs{\xi}>\epsilon}\frac{\abs{\xi}^{2s}\abs{\hat{u}(\xi)}^2}{\abs{\xi}^{2s}}\der\xi\leq\epsilon^{-2s}\aabs{(-\Delta)^{s/2}u}_{L^2(\R^\dimens)}^2.
\end{align*}
Now choose $0<\epsilon<(\abs{K}\abs{B(0, 1)})^{-1/n}$. Then one obtains the inequality 
$$
\aabs{u}_{L^2(\R^\dimens)}\leq\frac{\epsilon^{-s}}{\sqrt{1-\epsilon^\dimens\abs{K}\abs{B(0, 1)}}}\aabs{(-\Delta)^{s/2}u}_{L^2(\R^\dimens)}. \qedhere
$$
\end{proof}

\begin{remark}
Choosing $\epsilon=(2\abs{K}\abs{B(0, 1)})^{-1/\dimens}$ one obtains the following inequality in theorem~\ref{thm:poincarechineseguy}
$$
\aabs{u}_{L^2(\R^\dimens)}\leq\sqrt{2}(2\abs{K}\abs{B(0, 1)})^{s/\dimens}\aabs{(-\Delta)^{s/2}u}_{L^2(\R^\dimens)}.
$$
If $K$ is a ball, the constant in this inequality has the same scaling with respect to the diameter of the set as in theorem~\ref{thm:poincareinterpolation}, i.e. $c\approx(\text{diam}(K))^s$. Further, one can use similar method of proof as in theorem~\ref{thm:poincarechineseguy} to show Poincar\'e inequalities for more general pseudodifferential operators on certain manifolds. See~\cite{XI-note-on-fractional-poincare} for details.
\end{remark}

Provided we have the Poincar\'e inequality, we can prove the generalized version of it. See also~\cite[Corollary 1.56]{BCD-fourier-analysis-nonlinear-pde} for a similar inequality when $K$ is a ball. In that case one can take $\tilde{c}\approx(\text{diam}(K))^{s-t}$. The cases $s\geq t\geq 1$ and $s\geq 1\geq t\geq 0$ are also proved for $u\in\widetilde{H}^s(\Omega)$ in theorem~\ref{thm:poincareinterpolation}.

\begin{proof}[Proof of theorem \ref{thm:generalpoincare}]
Since $s\geq t\geq 0$ we have the continuous embeddings $ H^t(\R^\dimens)\hookrightarrow\dot{H}^t(\R^\dimens)$ and $H^s(\R^\dimens)\hookrightarrow H^t(\R^\dimens)$. Using the Poincar\'e inequality in theorem~\ref{thm:poincarechineseguy} we obtain
\begin{align*}
\aabs{(-\Delta)^{t/2}u}_{L^2(\R^\dimens)}=\aabs{u}_{\dot{H}^t(\R^\dimens)}&\leq\aabs{u}_{H^t(\R^\dimens)}\leq\aabs{u}_{H^s(\R^\dimens)}\leq 2^{\frac{s+1}{2}}\Big(\aabs{u}_{L^2(\R^\dimens)}+\aabs{u}_{\dot{H}^s(\R^\dimens)}\Big) \\
&\leq 2^{\frac{s+1}{2}}\Big(c\aabs{(-\Delta)^{s/2}u}_{L^2(\R^\dimens)}+\aabs{(-\Delta)^{s/2}u}_{L^2(\R^\dimens)}\Big) \\
&=\tilde{c}\aabs{(-\Delta)^{s/2}u}_{L^2(\R^\dimens)}
\end{align*}
where the constants $c$ and $\tilde{c}$ do not depend on $u$. In the fourth step we used the elementary inequality $(a+b)^r\leq 2^r (a^r+b^r)$ for $a, b\geq 0$. This concludes the proof.
\end{proof}

We then start preparation for our second proof by stating some known lemmas:\begin{itemize}
    \item lemma \ref{continuity-of-riesz} is the continuity of Riesz potentials, 
    \item lemma \ref{boundedness-of-the-inverse} is the $L^2$ boundedness of inverse of elliptic second order operators,
    \item lemma \ref{from-hormander} is a convolution $L^p$ estimate from below by an inhomogeneous Hölder norm, \item lemma \ref{from-japanese-guy} is a specific form of the Poincar\'e inequality for fractional Laplacians, and \item lemma \ref{grad-commutes-with-laplacian} is a simple commutation property for the gradient and a Fourier multiplier.
    \end{itemize}

\begin{lemma}[Theorem 4.5.3 in \cite{HO:analysis-of-pdos}]\label{continuity-of-riesz}
Let $t\geq 0$, $1<p<\infty$ be such that $n>tp$, and define $q = {\frac{np}{n-tp}}$. Then the Riesz potential $(-\Delta)^{-t/2}: L^p(\R^\dimens) \rightarrow L^q(\R^\dimens)$ is continuous.
\end{lemma}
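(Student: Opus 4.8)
This is the classical Hardy--Littlewood--Sobolev inequality, so the plan is to reduce to that statement and sketch the standard Hedberg-type argument rather than merely quote \cite[Theorem 4.5.3]{HO:analysis-of-pdos}. The case $t=0$ is trivial, since then $(-\Delta)^{-t/2}$ is the identity and $q=p$; moreover the hypothesis $\dimens>tp$ together with $p>1$ forces $0<t<\dimens/p<\dimens$. For such $t$ the operator $(-\Delta)^{-t/2}$ is, up to the dimensional constant recalled in section~\ref{sec:sobolevspacesandfractionallaplacian}, convolution with the locally integrable kernel $\abs{\cdot}^{-(\dimens-t)}$, i.e. $(-\Delta)^{-t/2}f = c_{\dimens,t}\,\abs{\cdot}^{-(\dimens-t)}\ast f$. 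Hence it suffices to prove $\aabs{\abs{\cdot}^{-(\dimens-t)}\ast f}_{L^q(\R^\dimens)}\leq C\aabs{f}_{L^p(\R^\dimens)}$ with $1/q = 1/p - t/\dimens$.

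First I would split the kernel at an arbitrary radius $R>0$ as $\abs{y}^{-(\dimens-t)} = \abs{y}^{-(\dimens-t)}\chi_{\{\abs{y}\leq R\}} + \abs{y}^{-(\dimens-t)}\chi_{\{\abs{y}>R\}}$ and bound the two resulting pieces of $(\abs{\cdot}^{-(\dimens-t)}\ast f)(x)$ separately. For the near piece, decomposing $\{\abs{y}\leq R\}$ into the dyadic annuli $\{2^{-j-1}R<\abs{y}\leq 2^{-j}R\}$, $j\geq 0$, and using the elementary bound $\int_{2^{-j-1}R<\abs{y}\leq 2^{-j}R}\abs{f(x-y)}\,\der y\lesssim(2^{-j}R)^{\dimens}Mf(x)$, where $M$ is the Hardy--Littlewood maximal operator, gives the pointwise estimate $\lesssim\sum_{j\geq 0}(2^{-j}R)^{-(\dimens-t)}(2^{-j}R)^{\dimens}Mf(x)\approx R^{t}Mf(x)$. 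For the far piece, Hölder's inequality gives the bound $\aabs{\abs{\cdot}^{-(\dimens-t)}\chi_{\{\abs{\cdot}>R\}}}_{L^{p'}(\R^\dimens)}\aabs{f}_{L^p(\R^\dimens)}$, and it is here that the hypothesis $\dimens>tp$ is used: it is equivalent to $p'>\dimens/(\dimens-t)$, which is precisely the condition making $\abs{y}^{-(\dimens-t)p'}$ integrable over $\{\abs{y}>R\}$, with $\aabs{\abs{\cdot}^{-(\dimens-t)}\chi_{\{\abs{\cdot}>R\}}}_{L^{p'}}=C\,R^{t-\dimens/p}$ by a direct computation.

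Next I would optimize in $R$. Combining the two bounds gives $\abs{(\abs{\cdot}^{-(\dimens-t)}\ast f)(x)}\lesssim R^{t}Mf(x)+R^{t-\dimens/p}\aabs{f}_{L^p}$, and choosing $R$ so that the two terms coincide, namely $R=(\aabs{f}_{L^p}/Mf(x))^{p/\dimens}$, yields the pointwise inequality $\abs{(\abs{\cdot}^{-(\dimens-t)}\ast f)(x)}\lesssim(Mf(x))^{1-tp/\dimens}\aabs{f}_{L^p}^{tp/\dimens}$. Since $1-tp/\dimens=p/q$, raising this to the power $q$ and integrating gives $\aabs{\abs{\cdot}^{-(\dimens-t)}\ast f}_{L^q}^{q}\lesssim\aabs{f}_{L^p}^{q-p}\aabs{Mf}_{L^p}^{p}$, and the Hardy--Littlewood maximal theorem $M\colon L^p(\R^\dimens)\to L^p(\R^\dimens)$, bounded precisely because $p>1$, finishes the proof via $\aabs{Mf}_{L^p}\lesssim\aabs{f}_{L^p}$.

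The only genuine subtlety is the endpoint: the argument really does need $p>1$, both for the strong-type maximal inequality in the last step and, in the form $p'<\infty$, for Hölder's inequality on the far piece; the borderline $p=1$ yields only a weak-type bound and is rightly excluded from the hypotheses. Everything else is bookkeeping --- tracking the constant $c_{\dimens,t}$ and checking the two exponent computations, the dyadic sum $\sum_{j\geq 0}(2^{-j}R)^{t}\approx R^{t}$ and the tail integral $\int_{\abs{y}>R}\abs{y}^{-(\dimens-t)p'}\,\der y\approx R^{\dimens-(\dimens-t)p'}$, the latter being exactly where $\dimens>tp$ is needed for convergence. Alternatively one may first prove the weak-type $(p,q)$ bound from the same kernel splitting and then invoke the Marcinkiewicz interpolation theorem, or argue Fourier-analytically; since this is a standard fact, the paper just cites \cite[Theorem 4.5.3]{HO:analysis-of-pdos}.
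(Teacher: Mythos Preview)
Your proposal is correct: you give the standard Hedberg argument for the Hardy--Littlewood--Sobolev inequality, and every step (the dyadic estimate by the maximal function, the tail estimate via H\"older requiring $(\dimens-t)p'>\dimens$, the optimization in $R$, and the appeal to the $L^p$ boundedness of $M$ for $p>1$) is in order. The paper itself gives no proof of this lemma at all; it is stated purely as a citation of \cite[Theorem~4.5.3]{HO:analysis-of-pdos}, so there is nothing to compare your argument against beyond noting that you have supplied the content the paper chose to omit --- which you yourself anticipate in your final paragraph.
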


\begin{lemma}[Section 6 in \cite{Eva10}]\label{boundedness-of-the-inverse}
Let $\Omega\subset \R^n$ be a bounded domain and $f\in L^2(\Omega)$. If $w\in H^1_0(\Omega)$ is the unique solution of the problem \begin{align}\label{eq:Laplace-bndry-vprob}\left\{\begin{array}{rl}
        (-\Delta) w &= f \;\;\text{in}\;  \Omega\\
        w|_{\partial\Omega} &=0
        \end{array}\right.\;,\end{align}
\noindent then there exists a constant $C=C(\Omega)$ such that 
\begin{equation}\label{eq:boundedness-of-inverse}\|w\|_{L^2(\Omega)}\leq C\|f\|_{L^2(\Omega)}\;.\end{equation}
\end{lemma}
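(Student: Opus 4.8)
The plan is to derive \eqref{eq:boundedness-of-inverse} from the energy identity associated with \eqref{eq:Laplace-bndry-vprob} together with the classical Poincar\'e inequality on the bounded domain $\Omega$. By definition $w \in H^1_0(\Omega)$ is the weak solution of \eqref{eq:Laplace-bndry-vprob}, that is, it satisfies the variational identity $\int_\Omega \nabla w \cdot \nabla v \, \der x = \int_\Omega f v \, \der x$ for every $v \in H^1_0(\Omega)$. (Existence and uniqueness of such $w$ would normally be supplied by the Lax--Milgram theorem, but uniqueness is already assumed in the statement, so only the a priori bound must be established.)

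First I would test the variational identity with the admissible choice $v = w \in H^1_0(\Omega)$, which gives $\|\nabla w\|_{L^2(\Omega)}^2 = \int_\Omega f w \, \der x \le \|f\|_{L^2(\Omega)} \|w\|_{L^2(\Omega)}$ by the Cauchy--Schwarz inequality. Next, since $\Omega$ is bounded and $w \in H^1_0(\Omega)$, the classical Poincar\'e inequality provides a constant $C_P = C_P(\Omega) > 0$ with $\|w\|_{L^2(\Omega)} \le C_P \|\nabla w\|_{L^2(\Omega)}$. Combining the two bounds yields $\|w\|_{L^2(\Omega)}^2 \le C_P^2 \|\nabla w\|_{L^2(\Omega)}^2 \le C_P^2 \|f\|_{L^2(\Omega)} \|w\|_{L^2(\Omega)}$; dividing by $\|w\|_{L^2(\Omega)}$ (the case $w = 0$ being trivial) gives $\|w\|_{L^2(\Omega)} \le C_P^2 \|f\|_{L^2(\Omega)}$, so one may take $C = C_P^2$, which depends only on $\Omega$.

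I do not expect any real obstacle here: the statement is classical and its only ingredients are the variational characterization of $w$ and the Poincar\'e inequality for $H^1_0$ of a bounded set. The only points deserving a word of care are that $w$ itself is a legitimate test function (immediate from $w \in H^1_0(\Omega)$) and the harmless reduction to $w \neq 0$. I note in passing that the stronger estimate $\|w\|_{H^2(\Omega)} \le C\|f\|_{L^2(\Omega)}$ would require additional regularity of $\partial\Omega$ (e.g. $C^{1,1}$, or convexity of $\Omega$) and elliptic regularity theory, but this is not needed: for the subsequent proof of the fractional Poincar\'e inequality the $L^2$-bound \eqref{eq:boundedness-of-inverse} suffices.
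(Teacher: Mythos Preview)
Your argument is correct and is exactly the standard energy method one finds in Evans: test the weak formulation with $v=w$, apply Cauchy--Schwarz, and invoke the classical Poincar\'e inequality for $H^1_0(\Omega)$. The paper does not supply its own proof of this lemma; it is merely quoted from \cite{Eva10}, so there is nothing further to compare.
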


\begin{lemma}[Theorem 4.5.10 in \cite{HO:analysis-of-pdos}]\label{from-hormander}
Let $\psi\in C^1(\R^n\setminus \{0\})$ be homogeneous of degree $-n/a$, $p\in[1,\infty]$ and $\gamma = n(1-1/a-1/p)$ be such that $\gamma\in (0,1)$. Then if $v\in L^p(\mathbb R^n)\cap\cdistr(\R^\dimens)$ we have
\begin{equation}\label{eq:hormander-estimate}
\sup_{x\neq y} \left\{ \frac{|(\psi\ast v)(x)-(\psi\ast v)(y)|}{|x-y|^\gamma} \right\} \leq C\|v\|_{L^p(\mathbb R^n)},
\end{equation}
where $C$ does not depend on $w$.
\end{lemma}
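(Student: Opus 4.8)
The plan is to give the classical ``split near/far'' argument by which Theorem~4.5.10 in \cite{HO:analysis-of-pdos} is established; since only the statement is used below one could simply invoke that reference, but the proof is short. First I would record two elementary pointwise bounds on $\psi$: homogeneity of degree $-n/a$ gives $\abs{\psi(w)}\leq A\abs{w}^{-n/a}$ with $A=\max_{\abs{\omega}=1}\abs{\psi(\omega)}$, and since $\nabla\psi$ is then homogeneous of degree $-n/a-1$, it gives $\abs{\nabla\psi(w)}\leq B\abs{w}^{-n/a-1}$ with $B=\max_{\abs{\omega}=1}\abs{\nabla\psi(\omega)}$.

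Next, fix $x\neq y$, set $R=\abs{x-y}$, and write
\[
(\psi\ast v)(x)-(\psi\ast v)(y)=\int_{\R^\dimens}\big(\psi(x-z)-\psi(y-z)\big)v(z)\,\der z,
\]
splitting the $z$-integration into the near region $\{\abs{x-z}<2R\}$ and the far region $\{\abs{x-z}\geq 2R\}$. On the near region I would bound the integrand crudely by $\big(\abs{\psi(x-z)}+\abs{\psi(y-z)}\big)\abs{v(z)}\leq A\big(\abs{x-z}^{-n/a}+\abs{y-z}^{-n/a}\big)\abs{v(z)}$, apply H\"older's inequality with exponents $p$ and $p'$, and use that $\int_{\abs{w}\leq cR}\abs{w}^{-np'/a}\,\der w$ is a constant times $R^{\,n(1-p'/a)}$; this integral converges precisely because the hypothesis $\gamma>0$ is equivalent to $p'<a$, and raising to the power $1/p'$ produces the bound $C\aabs{v}_{L^p(\R^\dimens)}R^{\,n(1/p'-1/a)}=C\aabs{v}_{L^p(\R^\dimens)}R^{\gamma}$. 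On the far region I would apply the mean value theorem along the segment from $x$ to $y$: since on that region the segment stays at distance at least $R$ from $z$, hence comparable to $\abs{x-z}$, the gradient bound gives $\abs{\psi(x-z)-\psi(y-z)}\leq CR\abs{x-z}^{-n/a-1}$; then H\"older together with $\int_{\abs{w}\geq 2R}\abs{w}^{-(n/a+1)p'}\,\der w$ being a constant times $R^{\,n-(n/a+1)p'}$ --- an integral that converges precisely because $\gamma<1$ is equivalent to $(n/a+1)p'>n$ --- again yields $C\aabs{v}_{L^p(\R^\dimens)}R^{\gamma}$ after collecting the powers of $R$. Adding the two contributions, dividing by $R^{\gamma}=\abs{x-y}^{\gamma}$, and taking the supremum over $x\neq y$ gives \eqref{eq:hormander-estimate}. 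The endpoint $p=\infty$ (so $p'=1$) is handled the same way, with the H\"older pairing replaced by $\aabs{v}_{L^\infty(\R^\dimens)}$ times the corresponding $L^1$ integral, while $p=1$ does not occur since then $\gamma=-n/a<0$.

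The only genuinely delicate point is the far-region estimate: one must verify that the whole segment joining $x$ and $y$ stays uniformly away from $z$, so that the mean value bound produces a clean negative power of $\abs{x-z}$, and this is exactly why the cutoff is placed at $2R$ rather than at $R$. Everything else is exponent bookkeeping, and it is a reassuring consistency check that the two convergence conditions for the dilation integrals match exactly the two hypotheses $\gamma>0$ and $\gamma<1$, with both boundary contributions scaling like $R^{\gamma}$.
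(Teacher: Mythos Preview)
The paper does not prove this lemma at all: it is simply quoted as Theorem~4.5.10 in \cite{HO:analysis-of-pdos} and used as a black box in Case~3 of the proof of Theorem~\ref{theorem:poincare}. Your argument---the standard near/far splitting at scale $2\abs{x-y}$, with the crude bound $\abs{\psi}\lesssim\abs{\cdot}^{-n/a}$ on the near part and the mean-value bound $\abs{\nabla\psi}\lesssim\abs{\cdot}^{-n/a-1}$ on the far part---is correct and is essentially the classical proof; your bookkeeping of exponents and your observation that the two convergence conditions correspond exactly to $\gamma>0$ and $\gamma<1$ are both right. So you have supplied a valid proof where the paper only gives a citation.
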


\begin{lemma}[Formula (1.3) in \cite{Oz92}]\label{from-japanese-guy}

Let $1<p\leq q < \infty$ and $f\in W^{n/p, p}(\mathbb R^n)$. There is a constant $C=C(n,p)$ such that 
\begin{equation}\label{eq:japanese-estimate}
    \|f\|_{L^q(\mathbb R^n)} \leq C q^{1-1/p} \|(-\Delta)^{n/2p}f\|^{1-p/q}_{L^p(\mathbb R^n)}  \|f\|^{p/q}_{L^p(\mathbb R^n)}\;.
\end{equation}
\end{lemma}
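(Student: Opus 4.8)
The plan is to prove \eqref{eq:japanese-estimate} by a single frequency cutoff at a scale $\lambda>0$, to be optimised at the end, controlling the low frequencies by $\|f\|_{L^p}$ and the high frequencies by $\|(-\Delta)^{n/2p}f\|_{L^p}$, while tracking the exact dependence of each piece on $q$ and $\lambda$. Fix a radial $\chi\in C_c^\infty(\R^n)$ with $\chi\equiv1$ near the origin, set $g:=(-\Delta)^{n/2p}f\in L^p(\R^n)$, and split $f=f_\ell+f_h$ with $\widehat{f_\ell}=\chi(\cdot/\lambda)\hat f$, so that $\widehat{f_h}=(1-\chi(\cdot/\lambda))\hat f=(1-\chi(\cdot/\lambda))|\cdot|^{-n/p}\hat g$.

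For the low part, $f_\ell$ has frequency support in a ball of radius $\lesssim\lambda$, so Nikolskii's inequality (write $f_\ell=f_\ell\ast\psi_\lambda$ with $\psi_\lambda$ a rescaled Schwartz function whose Fourier transform equals $1$ on that ball, and apply Young's inequality) gives $\|f_\ell\|_{L^q}\leq C(n)\lambda^{n(1/p-1/q)}\|f_\ell\|_{L^p}$; since moreover $\chi(D/\lambda)$ is convolution against an $L^1$-normalised kernel, $\|f_\ell\|_{L^p}\leq C(n)\|f\|_{L^p}$, so $\|f_\ell\|_{L^q}\lesssim\lambda^{n(1/p-1/q)}\|f\|_{L^p}$. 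For the high part, write $f_h=\check m_\lambda\ast g$ with $m_\lambda(\xi)=(1-\chi(\xi/\lambda))|\xi|^{-n/p}$; scaling gives $\check m_\lambda(x)=\lambda^{n/p'}\Phi(\lambda x)$, where $\Phi$ is the inverse Fourier transform of the order $-n/p$ symbol $(1-\chi)|\cdot|^{-n/p}$, so $|\Phi(x)|\lesssim|x|^{-n/p'}$ near the origin and $\Phi$ is rapidly decreasing at infinity. Hence $\Phi\in L^\tau(\R^n)$ for every $\tau<p'$, and a direct estimate of the resulting singular integral shows that for the Young exponent $1/\tau=1/p'+1/q$ one has $\|\Phi\|_{L^\tau}\lesssim q^{1-1/p}$, the blow-up as $\tau\uparrow p'$ being precisely of order $(p'-\tau)^{-1/\tau}\sim q^{1/p'}$. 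Young's inequality and the scaling identity $\|\check m_\lambda\|_{L^\tau}=\lambda^{n/p'-n/\tau}\|\Phi\|_{L^\tau}=\lambda^{-n/q}\|\Phi\|_{L^\tau}$ then yield $\|f_h\|_{L^q}\leq\|\check m_\lambda\|_{L^\tau}\|g\|_{L^p}\lesssim q^{1-1/p}\lambda^{-n/q}\|g\|_{L^p}$.

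Summing, $\|f\|_{L^q}\lesssim\lambda^{n(1/p-1/q)}\|f\|_{L^p}+q^{1-1/p}\lambda^{-n/q}\|g\|_{L^p}$ for every $\lambda>0$. I would then choose $\lambda$ so that $\lambda^{n/p}=q^{1-1/p}\|g\|_{L^p}/\|f\|_{L^p}$, which equalises the two terms; substituting and using $q^{-(1-1/p)p/q}\leq1$ gives
\[
\|f\|_{L^q}\lesssim q^{(1-1/p)(1-p/q)}\|g\|_{L^p}^{1-p/q}\|f\|_{L^p}^{p/q}\leq q^{1-1/p}\,\|(-\Delta)^{n/2p}f\|_{L^p}^{1-p/q}\,\|f\|_{L^p}^{p/q},
\]
which is \eqref{eq:japanese-estimate}. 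The cases $q=p$ (trivial) and $g=0$ (which forces $\hat f$ to be supported at the origin, hence $f=0$ since $f\in L^p$) are treated separately.

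The hard part is that this is a \emph{borderline} embedding: $W^{n/p,p}(\R^n)$ just fails to embed into $L^\infty(\R^n)$, so the content is not the boundedness $W^{n/p,p}\hookrightarrow L^q$ but the sharp rate $q^{1-1/p}$ at which the constant degenerates as $q\to\infty$. Consequently the delicate point is the high-frequency estimate: one must check that the rescaled kernel $\check m_\lambda$ lies in $L^\tau$ for the specific $q$-dependent Young exponent $\tau=\tau(q)\uparrow p'$, that $\|\Phi\|_{L^\tau}$ grows exactly like $q^{1-1/p}$ and not faster, and that the optimisation over $\lambda$ reproduces precisely the exponents $1-p/q$ and $p/q$ on the right.
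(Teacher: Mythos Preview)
Your argument is correct. Note that the paper does not actually prove this lemma: it is quoted from Ozawa~\cite{Oz92}, with the single remark that ``this estimate is proved using sharp Hardy--Littlewood--Sobolev inequalities.'' Your route is therefore genuinely different. Instead of invoking sharp HLS, you perform a single-scale frequency split $f=f_\ell+f_h$, control the low piece by Bernstein/Nikolskii, and control the high piece by writing $f_h=\check m_\lambda\ast g$ and estimating $\|\check m_\lambda\|_{L^\tau}$ directly from the pointwise bounds $|\Phi(x)|\lesssim |x|^{-n/p'}$ near $0$ and rapid decay at infinity (both of which follow from standard symbol estimates for $(1-\chi)|\cdot|^{-n/p}\in S^{-n/p}$). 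The blow-up $\|\Phi\|_{L^\tau}\sim (p'-\tau)^{-1/\tau}$ as $\tau\uparrow p'$ then converts, via $1/\tau=1/p'+1/q$, into exactly the factor $q^{1-1/p}$.

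What each approach buys: the HLS route in \cite{Oz92} links the constant to the sharp HLS constant and places the estimate in the context of the borderline Trudinger--Moser embedding; your Littlewood--Paley/kernel argument is more self-contained, needing only Young's inequality and elementary Fourier analysis, and in fact yields the slightly sharper power $q^{(1-1/p)(1-p/q)}$ before you relax it to $q^{1-1/p}$. One small point worth making explicit in a write-up is that the Nikolskii constant is uniform in $q$ (it is, since the convolution kernel is a fixed Schwartz function rescaled), and that $q^{1/q}$ is bounded, so that the implicit constants genuinely depend only on $n$ and $p$.
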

\noindent This estimate is proved using sharp Hardy-Littlewood-Sobolev inequalities.

\begin{lemma}\label{grad-commutes-with-laplacian}

Let $t\geq 0$ and $f\in H^{1+2t}(\R^n)$. Then $[\nabla,(-\Delta)^t]f=0$, that is, the gradient and the fractional Laplacian of exponent $t$ commute.
\end{lemma}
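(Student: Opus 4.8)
The plan is to prove Lemma~\ref{grad-commutes-with-laplacian} by passing to the Fourier side, where both operations become multiplications by functions of $\xi$ and hence commute trivially; the only real content is justifying that everything makes sense as tempered distributions under the hypothesis $f\in H^{1+2t}(\R^n)$.

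First I would observe that for $f\in H^{1+2t}(\R^n)$ we have $\hat f\in L^2_{\mathrm{loc}}$ with $\langle\cdot\rangle^{1+2t}\hat f\in L^2(\R^n)$, so both $|\xi|^{2t}\hat f$ (since $t\geq 0$ and $2t\leq 1+2t$) and $\xi_j\hat f$ (since $1\leq 1+2t$) are well-defined tempered distributions; moreover the mixed symbol $\xi_j|\xi|^{2t}\hat f$ is controlled by $\langle\xi\rangle^{1+2t}|\hat f|\in L^2$, so $f\in H^{1+2t}$ is exactly the regularity needed for $\nabla(-\Delta)^t f$ and $(-\Delta)^t\nabla f$ to both lie in $H^0=L^2$. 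Concretely, $\nabla(-\Delta)^t f=\ifourier\big((i\xi)|\xi|^{2t}\hat f\big)$ and $(-\Delta)^t\nabla f=\ifourier\big(|\xi|^{2t}(i\xi)\hat f\big)$, componentwise. Since $|\xi|^{2t}$ and $i\xi_j$ are ordinary (locally bounded, measurable) functions, their pointwise product is commutative, so the two Fourier-side expressions are literally equal as functions, hence their inverse Fourier transforms agree. Therefore $[\nabla,(-\Delta)^t]f=0$.

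Alternatively, and perhaps cleaner for the exposition, I would first prove the identity for $\varphi\in\schwartz(\R^n)$ (where all expressions are classical and the Fourier computation above is unambiguous), and then extend by density: $\schwartz(\R^n)$ is dense in $H^{1+2t}(\R^n)$, and both $f\mapsto\nabla(-\Delta)^t f$ and $f\mapsto(-\Delta)^t\nabla f$ are continuous from $H^{1+2t}(\R^n)$ into $L^2(\R^n)^n$ by the mapping properties recorded in section~\ref{sec:sobolevspacesandfractionallaplacian} (the fractional Laplacian $(-\Delta)^t\colon H^r\to H^{r-2t}$ is continuous for $t\geq 0$, and $\nabla\colon H^r\to H^{r-1}$ is continuous). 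Hence the two operators, agreeing on a dense subspace and being continuous, agree on all of $H^{1+2t}(\R^n)$.

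There is no substantial obstacle here: the statement is essentially the commutativity of multiplication by $|\xi|^{2t}$ with multiplication by $\xi_j$ on the Fourier side. The only point requiring a modicum of care is bookkeeping the function spaces so that every intermediate object ($|\xi|^{2t}\hat f$, $\xi_j\hat f$, and the product) is a genuine tempered distribution and the inverse Fourier transform may be applied — and this is precisely what the hypothesis $f\in H^{1+2t}(\R^n)$ guarantees. I would present the density argument as the main proof since it reuses the continuity statements already established in the preliminaries.
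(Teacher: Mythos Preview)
Your proof is correct and takes essentially the same approach as the paper: a direct Fourier-symbol computation showing $i\xi_j|\xi|^{2t}\hat f = |\xi|^{2t}i\xi_j\hat f$. The paper's proof is a one-line version of exactly this; your additional care about the intermediate objects being well-defined tempered distributions and the density argument are not strictly needed but do no harm.
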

\begin{proof}
The proof is just a trivial computation with Fourier symbols:
\begin{equation*} \fourier(\nabla(-\Delta)^tf) = i\xi |\xi|^{2t} \hat f(\xi) = |\xi|^{2t} i\xi \hat f(\xi) = \fourier((-\Delta)^t(\nabla f))\;.\qedhere\end{equation*}
\end{proof}

We are now ready to state and prove the fractional Poincar\'e inequality.

\begin{theorem}[Poincar\'e inequality]
\label{theorem:poincare}
Let $\Omega\subset\R^\dimens$ be a bounded domain, $s\in [0,\infty)$ and $u\in \widetilde{H}^s(\Omega)$. There exists a constant $c=c(n, \Omega, s)$ such that \begin{equation}\label{eq:poincare-inequality} \|u\|_{L^2(\R^n)} \leq c \|(-\Delta)^{s/2}u\|_{L^2(\R^n)}\;. \end{equation}
\end{theorem}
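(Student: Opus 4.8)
The plan is to follow the philosophy announced in this section: prove the inequality first for $s \in (0,1)$ using the listed auxiliary lemmas, and then bootstrap to arbitrary $s \geq 0$ by splitting $(-\Delta)^{s/2}$ through the locality of integer powers of $-\Delta$. For the base case $s \in (0,1)$, let $u \in \widetilde{H}^s(\Omega)$, so $u$ is supported in $\overline{\Omega}$. The idea motivated by \cite{GSU-calderon-problem-fractional-schrodinger} is to write $u = (-\Delta)^{-s/2} g$ with $g = (-\Delta)^{s/2} u \in L^2(\R^n)$, and then estimate $\|u\|_{L^2(\R^n)}$ using that $u$ is compactly supported together with the smoothing of the Riesz potential. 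Concretely, I would pick an auxiliary exponent and apply lemma \ref{continuity-of-riesz} to get $u = (-\Delta)^{-s/2} g \in L^q(\R^n)$ with $q = \frac{2n}{n-2s}$ (valid since $2s < n$ after possibly reducing to that case; the low-dimensional cases $n=1$, and $n \leq 2s$, I would treat by interpolating with a slightly different exponent or via lemma \ref{from-japanese-guy}). Since $\spt u \subset \overline{\Omega}$ is bounded, Hölder's inequality gives $\|u\|_{L^2(\Omega)} \leq |\Omega|^{1/2 - 1/q} \|u\|_{L^q(\Omega)} \leq C(\Omega) \|u\|_{L^q(\R^n)} \leq C \|g\|_{L^2(\R^n)} = C \|(-\Delta)^{s/2} u\|_{L^2(\R^n)}$, which is exactly \eqref{eq:poincare-inequality} for $s \in (0,1)$.

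Next I would handle the general $s \geq 0$ by reduction. Write $s = 2k + \sigma$ where $k \in \N_0$ and $\sigma \in [0,2)$; more convenient is to reduce one unit at a time. The key observation is that for $u \in \widetilde{H}^s(\Omega)$ and any coordinate derivative $\partial_j$, the function $\partial_j u$ lies in $\widetilde{H}^{s-1}(\Omega)$ (it is a limit of $\partial_j \varphi_k$ with $\varphi_k \in C^\infty_c(\Omega)$, so still compactly supported in $\overline{\Omega}$), and by lemma \ref{grad-commutes-with-laplacian} the gradient commutes with fractional Laplacians. So if the Poincaré inequality holds at level $s-1$, then combining $\|\partial_j u\|_{L^2} \leq c \|(-\Delta)^{(s-1)/2} \partial_j u\|_{L^2} = c\|\partial_j (-\Delta)^{(s-1)/2} u\|_{L^2}$, summing over $j$, and using $\sum_j \|\partial_j w\|_{L^2}^2 = \|(-\Delta)^{1/2} w\|_{L^2}^2$ with the classical Poincaré inequality on the compactly supported function $u$, one gets control of $\|u\|_{L^2}$ by $\|(-\Delta)^{s/2} u\|_{L^2}$. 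Iterating this down to a residual exponent in $(0,1]$ (handled by the base case, or by the classical Poincaré inequality when the residual is exactly $1$) completes the induction. Alternatively — and perhaps cleaner — I would invoke theorem \ref{thm:generalpoincare} directly: once theorem \ref{thm:poincarechineseguy} is available it already yields the statement for $u \in H^s_{\overline\Omega}(\R^n) \supset \widetilde H^s(\Omega)$; but since this proof is meant to be the self-contained ``second proof'' via Hörmander-type estimates, I would keep the argument within that toolkit and only use lemmas \ref{continuity-of-riesz}--\ref{grad-commutes-with-laplacian}.

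The main obstacle I anticipate is the bookkeeping of dimension-dependent exponent restrictions in the base case: lemma \ref{continuity-of-riesz} requires $n > tp$, i.e. $n > 2s$, which fails when $n = 1$ and $s \in [1/2, 1)$, exactly the case the acknowledgements single out as needing a separate argument from Mikko Salo. In that borderline regime I would instead use lemma \ref{from-hormander}: with $\psi(x) = c_{n,s}|x|^{-(n-2s)}$ the Riesz kernel of $(-\Delta)^{-s/2}$, which is homogeneous of degree $-n/a$ for $a = \frac{n}{n-2s}$, and with $p = 2$, the exponent $\gamma = n(1 - 1/a - 1/2) = n/2 - (n - 2s) \cdot \tfrac{?}{}$ — more carefully $\gamma = n(1/2 - (n-2s)/n) = 2s - n/2$, which lies in $(0,1)$ precisely when $n/4 < s < 1/2 + n/4$; for $n=1$ this is $s \in (1/4, 3/4)$, and a second choice of $p$ covers the rest. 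From the resulting Hölder bound $|(-\Delta)^{-s/2}g(x) - (-\Delta)^{-s/2}g(y)| \leq C|x-y|^\gamma \|g\|_{L^2}$, together with the fact that $u = (-\Delta)^{-s/2}g$ must vanish somewhere outside $\overline{\Omega}$ (indeed decays at infinity), one concludes a pointwise and hence $L^2(\Omega)$ bound on $u$ by $\|g\|_{L^2}$. Piecing together these exponent ranges so that every $(n,s)$ with $s \in (0,1)$ is covered is the delicate part; everything else is routine interpolation and the commutation identities already recorded in lemmas \ref{grad-commutes-with-laplacian} and the preliminaries.
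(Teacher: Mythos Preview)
Your approach is essentially the paper's: Riesz potential continuity (lemma \ref{continuity-of-riesz}) plus H\"older for the generic base case $2s<n$, H\"ormander's H\"older estimate (lemma \ref{from-hormander}) for the borderline $n=1$ situation, and an induction on $s$ via commutation of $\nabla$ with fractional powers. Your reduction step is in fact tidier than the paper's: you drop $s$ to $s-1$ uniformly via the gradient and the classical Poincar\'e inequality, whereas the paper splits according to the parity of $\floor{s}$ (using iterated $(-\Delta)^{-1}$ bounds from lemma \ref{boundedness-of-the-inverse} for even $\floor{s}$ and the gradient trick only for odd $\floor{s}$).

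Two concrete errors in your borderline analysis, however. First, the Riesz kernel of $(-\Delta)^{-s/2}$ is $c\,|x|^{s-n}$, not $|x|^{-(n-2s)}$ (you have confused $(-\Delta)^{-s/2}$ with $(-\Delta)^{-s}$). With the correct degree $s-n=-n/a$ one gets $a=n/(n-s)$ and, for $p=2$, $\gamma=s-n/2$; for $n=1$ this lands in $(0,1)$ exactly when $s\in(1/2,1)$, not $(1/4,3/4)$. Second, your suggestion that ``a second choice of $p$ covers the rest'' does not salvage the endpoint $n=1$, $s=1/2$: taking $p>2$ in lemma \ref{from-hormander} yields a bound by $\|(-\Delta)^{s/2}u\|_{L^p}$, not by $\|(-\Delta)^{s/2}u\|_{L^2}$, so you do not recover \eqref{eq:poincare-inequality}. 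This single point genuinely requires lemma \ref{from-japanese-guy} (the Ozawa estimate), which you did mention in passing; the paper isolates it as a separate case. With those two fixes---correct the kernel exponent and invoke lemma \ref{from-japanese-guy} at $n=1$, $s=1/2$---your argument goes through and is slightly more streamlined than the paper's in the inductive part.
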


\begin{proof}
In the inequalities the constants (usually denoted by $c$, $C$, etc.) do not depend on the function which is being estimated and can change from line to line. We let the symbol $s'=s-\floor{s}$ indicate the fractional part of the exponent $s$, with the convention that $s'\in [0,1)$. First observe that by using lemma \ref{continuity-of-riesz} with $p=2$ and H\"older's inequality we get the following useful estimate \begin{equation}\label{eq:riesz-continuity} \|u\|_{L^2(\mathbb R^n)} \leq C_\Omega\|u\|_{L^q(\mathbb R^n)} \leq c \|(-\Delta)^{t/2}u\|_{L^2(\mathbb R^n)} \;\end{equation}
when $u\in\widetilde{H}^t(\Omega)$ where $q$ and $t$ are as in lemma \ref{continuity-of-riesz}. Our proof is divided in several cases.
\vspace{3mm}

\pmb{Case 1: $\quad \floor{s}\in 2\mathbb Z$, $s'=0$}.

\noindent Recall that $\widetilde{H}^{2h}(\Omega)\subset H^{2h}_0(\Omega)$. We show that if $u\in H_0^{2h}(\Omega)$ and $h\in\mathbb N$ then there exists a constant $c=c(n, \Omega, h)$ such that \begin{equation}\label{eq:iteration-estimate} \|(-\Delta)^h u\|_{L^2(\R^n)} \geq c \|u\|_{L^2(\R^n)}\;.\end{equation}
\noindent The estimate \eqref{eq:iteration-estimate} holds trivially if $h=0$, while if $h=1$ then \eqref{eq:iteration-estimate} follows from the boundedness of the inverse lemma \ref{boundedness-of-the-inverse}. Assume now that $h\geq 2$, and by induction that \eqref{eq:iteration-estimate} holds for $h-1$. Then $(-\Delta) u \in  H^{2h-2}_0(\Omega)$, so we can apply \eqref{eq:iteration-estimate} and \eqref{eq:boundedness-of-inverse} to get $$ \|(-\Delta)^h u\|_{L^2(\R^n)} = \|(-\Delta)^{h-1} (-\Delta u)\|_{L^2(\R^n)}\geq c \|(-\Delta) u\|_{L^2(\R^n)} \geq c^{\prime} \|u\|_{L^2(\R^n)}\;. $$

\noindent In the next steps we consider $s\not\in\N$.
\vspace{3mm}

\pmb{Case 2: $\quad\floor{s}\in 2\mathbb Z$, $s'\in(0,1/2) \quad$} or \pmb{$\quad\floor{s}\in 2\mathbb Z$, $s'\in[1/2,1)$, $n\geq 2$}. 

\noindent Now it holds that $n>2s'$, and there exists $k\in\mathbb N$ such that $s\in (2k, 2k+1)$ and we can write $(-\Delta)^{s/2} u = (-\Delta)^{s'/2} (-\Delta)^k u$. Since $(-\Delta)^k u \in \widetilde{H}^{s-2k}(\Omega) = \widetilde{H}^{s'}(\Omega)$, we can apply formula \eqref{eq:riesz-continuity} \begin{equation}\label{eq:first-half-estimate} \|(-\Delta)^ku\|_{L^2(\mathbb R^n)} \leq c \|(-\Delta)^{s/2}u\|_{L^2(\mathbb R^n)} \;.\end{equation} 

\noindent Since $ u\in H^s_0(\Omega) \subset H^{2k}_0(\Omega) $, we can get the result using formula \eqref{eq:iteration-estimate}.  
\vspace{3mm}

\pmb{Case 3: $\quad\floor{s}\in 2\mathbb Z$, $s'\in(1/2,1)$, $n=1$}.

\noindent As in the second case, there exists $k\in\mathbb N$ such that $s\in (2k, 2k+1)$ and we can write $(-\Delta)^{s/2} u = (-\Delta)^{s'/2} (-\Delta)^k u$. However, since now $n<2s'$, we cannot directly use formula \eqref{eq:riesz-continuity}. 

\noindent Assume first that $w\in C^\infty_c(\Omega)$. Then we can take $y_0\in\Omega$ such that $w(y_0)=0$ and $x_0\in\Omega$ such that $w(x_0)=\|w\|_{L^\infty(\Omega)}$. With these choices and for any $\gamma > 0$ we can write
\begin{equation}\label{eq:L2-to-sup-estimate}
\|w\|_{L^2(\mathbb R^n)} \leq C\|w\|_{L^\infty(\Omega)} \leq C\frac{w(x_0)-w(y_0)}{|x_0-y_0|^\gamma}\;.
\end{equation}
\noindent We now let $\gamma=s'-n/2=s'-1/2\in(0,1/2)$, and define $\psi=|x|^{s'-1}$, $v=(-\Delta)^{s'/2}w$. By the mapping properties of the fractional Laplacian and the Mikhlin theorem, we can observe that $v\in L^p(\mathbb R)$ for all $1<p<\infty$ (see \cite[Theorem 7.2]{AB-psidos-and-singular-integrals}). Using the continuity of the Riesz potential in lemma \ref{continuity-of-riesz}, we see that for a constant $c=c(n,s)$ the following holds almost everywhere:
\begin{equation}\label{before-lemma} 
w = (-\Delta)^{-s'/2}((-\Delta)^{s'/2}w) = (-\Delta)^{-s'/2}v = c I_{1-s'}v = c |x|^{s'-1}\ast v = c (\psi \ast v)\;. \end{equation}

Let $\chi_R$ be the characteristic function of the ball $B_R$ of radius $R>0$, and define $w_R = c(\psi\ast (\chi_R v))$, with $c$ as above. We see that
$$ w_R(x) = c  (\psi\ast(\chi_R v))(x) = c \int_{\mathbb R} \psi(x-y)\chi_R(y)v(y)dy\;, $$
\noindent and the integrand is dominated by $|\psi(x-y)v(y)|$. This is an integrable function, since
$$ \int_{\mathbb R}|\psi(x-y)v(y)|dy = \int_{\mathbb R} \psi(x-y)|v(y)|dy = I_{1-s'}(|v|)(x)\;, $$

\noindent and the Riesz potential is well defined almost everywhere on $L^p(\R)$ for any $1<p<1/s'$. Now the dominated convergence theorem gives that $w_R(x) \to w(x)$ as $R \to \infty$ for almost every fixed $x \in \R$.

Let $\epsilon > 0$ and $x_0', y_0'\in \mathbb R$ be such that $|x_0-x_0'|<\epsilon, |y_0-y_0'|<\epsilon$ and $w_R(x_0')$, $w_R(y_0')$ converge to $w(x_0')$, $w(y_0')$ as $R\rightarrow\infty$.  Applying lemma \ref{from-hormander} with $p=2$, $n=1$ and $a=1-s'$, we see that

\begin{align}\label{eq:sup-to-laplacian-estimate}
\frac{w_R(x_0')-w_R(y_0')}{|x_0-y_0|^\gamma} & \leq \sup_{x\neq y}\left\{ \frac{w_R(x)-w_R(y)}{|x-y|^\gamma} \right\} \\ & = c \sup_{x\neq y}\left\{ \frac{(\psi\ast (\chi_Rv))(x)-(\psi\ast (\chi_Rv))(y)}{|x-y|^\gamma} \right\} \\ & \leq C\|\chi_Rv\|_{L^2(\mathbb R)} \leq C\|v\|_{L^2(\mathbb R)} = C\|(-\Delta)^{s'/2}w\|_{L^2(\mathbb R)}\;.
\end{align}

We now first take the limit for $R\rightarrow\infty$ and then the one for $\epsilon \rightarrow 0$. By the smoothness of $w$, this gives

\begin{equation}\label{new-guy} \frac{w(x_0)-w(y_0)}{|x_0-y_0|^\gamma} \leq C \|(-\Delta)^{s'/2}w\|_{L^2(\mathbb R)}\;.
\end{equation}

Combining formulas \eqref{eq:L2-to-sup-estimate} and \eqref{new-guy} we get $\|w\|_{L^2(\mathbb R^n)} \leq C\|(-\Delta)^{s'/2}w\|_{L^2(\mathbb R^n)}$, and the same inequality holds for $w\in  \widetilde{H}^{s'}(\Omega)$ by density. Let now $w:=(-\Delta)^k u \in \widetilde{H}^{s-2k}(\Omega) = \widetilde{H}^{s'}(\Omega)$. The result is then obtained applying formula \eqref{eq:iteration-estimate}.
\vspace{3mm}

\pmb{Case 4: $\quad\floor{s}\in 2\mathbb Z$, $s'=1/2$, $n=1$}.

\noindent Let $w:=(-\Delta)^k u \in \widetilde{H}^{s-2k}(\Omega) = \widetilde{H}^{s'}(\Omega)$. Here we make use of formula \eqref{eq:japanese-estimate} with $p=2$, $q=3$ in order to estimate
\begin{equation}\label{eq:ozawa-estimate}
\|w\|_{L^2(\R^n)} = \|w\|^3_{L^2(\R^n)}\|w\|^{-2}_{L^2(\R^n)}\leq \|w\|^3_{L^3(\R^n)}\|w\|^{-2}_{L^2(\R^n)} \leq C \|(-\Delta)^{n/4}w\|_{L^2(\R^n)}\;.
\end{equation}
Since $n/4$ equals $s'/2$ for $n=1$, the results follows from \eqref{eq:ozawa-estimate} and \eqref{eq:iteration-estimate}. 
\vspace{3mm}

\pmb{Case 5: $\quad\floor{s}\not\in 2\mathbb Z$}.

\noindent Let $u\in C_c^{\infty}(\Omega)$. In this case $s=s'+2k+1$ for some $k\in \mathbb N$, therefore we can calculate
\begin{equation}\begin{split}\label{eq:odd-case}
\|(-\Delta)^{s/2} u\|_{L^2(\mathbb R^n)} & = \|(-\Delta)^{1/2} (-\Delta)^{(s'+2k)/2} u\|_{L^2(\mathbb R^n)} \\ & = \|\nabla (-\Delta)^{(s'+2k)/2} u\|_{L^2(\mathbb R^n)}\\ & = \| (-\Delta)^{(s'+2k)/2} \nabla u\|_{L^2(\mathbb R^n)} \\ & \geq C\| \nabla u\|_{L^2(\mathbb R^n)} \geq C\| u\|_{L^2(\mathbb R^n)}\;.
\end{split}\end{equation}

\noindent The second equality in \eqref{eq:odd-case} is just an $L^2$ property of the gradient and the $(-\Delta)^{1/2}$ operator. The third equality in \eqref{eq:odd-case} follows from lemma \ref{grad-commutes-with-laplacian}. The first inequality in \eqref{eq:odd-case} follows from the even cases, given that $\floor{s'+2k}\in 2\mathbb Z$ and $\nabla u \in \widetilde{H}^{s'+2k}(\Omega)$ componentwise. The last inequality follows from the classical Poincar\'e inequality. The rest follows by approximation.
\end{proof}

\begin{remark}
\label{rmk:poincarecaffarellisilvestre}
Third way to prove the Poincar\'e inequality is using the known result in the case $\dimens\geq 1$ and $s\in (0, 1)$ \cite[Lemma 2.2]{CLR18}. This result is proved using Caffarelli-Silvestre extension. Then one can use similar reduction argument to prove it for all $s\geq 0$ and $u\in C_c^{\infty}(\Omega)$. Namely, one shows using the classical Poincar\'e inequality that the claim holds for all $s\in [0, 2)$. The higher order fractional cases are obtained by splitting the fractional Laplacian as $\fraclaplace=(-\Delta)^k(-\Delta)^{t/2}$ where $t\in (0, 2)$. Boundedness of the inverse and the fractional Poincar\'e inequality for $t\in (0, 2)$ imply the claim for fractional exponents. Integer order exponents are obtained from the boundedness of the inverse as before. The inequality for $u\in\widetilde{H}^s(\Omega)$ follows by approximation.
\end{remark}

For the fourth proof we use the following interpolation lemma of homogeneous Sobolev spaces which is a simple consequence of H\"older's inequality, see \cite[Proposition 1.32]{BCD-fourier-analysis-nonlinear-pde}.

\begin{lemma}
\label{lemma:interpolationhomogeneoussobolev}
Let $s_0\leq r\leq s_1$ and $f\in \dot{H}^{s_0}(\R^\dimens)\cap\dot{H}^{s_1}(\R^\dimens)$. Then $f\in \dot{H}^r(\R^\dimens)$ and
$$
\aabs{f}_{\dot{H}^r(\R^\dimens)}\leq \aabs{f}^{1-\theta}_{\dot{H}^{s_0}(\R^\dimens)}\aabs{f}_{\dot{H}^{s_1}(\R^\dimens)}^{\theta}, \quad r=(1-\theta)s_0+\theta s_1.
$$
\end{lemma}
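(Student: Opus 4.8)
The plan is to pass to the Fourier side, where $\aabs{f}_{\dot{H}^{\sigma}(\R^\dimens)}^2=\int_{\R^\dimens}\abs{\xi}^{2\sigma}\abs{\hat{f}(\xi)}^2\,\der\xi$, and reduce everything to a single application of H\"older's inequality. First I would dispose of the trivial cases: if $s_0=s_1$, equivalently $\theta\in\{0,1\}$, there is nothing to prove, so I may assume $s_0<s_1$ and $\theta=(r-s_0)/(s_1-s_0)\in(0,1)$.

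The next step is a pointwise splitting of the weight. Since $2r=2(1-\theta)s_0+2\theta s_1$, for every $\xi\neq 0$ we have $\abs{\xi}^{2r}=\bigl(\abs{\xi}^{2s_0}\bigr)^{1-\theta}\bigl(\abs{\xi}^{2s_1}\bigr)^{\theta}$, and hence
\[
\abs{\xi}^{2r}\abs{\hat{f}(\xi)}^2=\Bigl(\abs{\xi}^{2s_0}\abs{\hat{f}(\xi)}^2\Bigr)^{1-\theta}\Bigl(\abs{\xi}^{2s_1}\abs{\hat{f}(\xi)}^2\Bigr)^{\theta}
\]
for a.e.\ $\xi$ (the set $\{\xi=0\}$ is null, and $\hat f$ is finite a.e.\ because $\hat f\in L^1_{\mathrm{loc}}(\R^\dimens)$ by hypothesis, so the behaviour of the weights at the origin is irrelevant). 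I would then integrate over $\R^\dimens$ and apply H\"older's inequality to the product of the two nonnegative factors with conjugate exponents $p=1/(1-\theta)$ and $q=1/\theta$, obtaining
\[
\int_{\R^\dimens}\abs{\xi}^{2r}\abs{\hat{f}(\xi)}^2\,\der\xi\leq\Bigl(\int_{\R^\dimens}\abs{\xi}^{2s_0}\abs{\hat{f}(\xi)}^2\,\der\xi\Bigr)^{1-\theta}\Bigl(\int_{\R^\dimens}\abs{\xi}^{2s_1}\abs{\hat{f}(\xi)}^2\,\der\xi\Bigr)^{\theta}=\aabs{f}_{\dot{H}^{s_0}(\R^\dimens)}^{2(1-\theta)}\aabs{f}_{\dot{H}^{s_1}(\R^\dimens)}^{2\theta}.
\]
In particular $\abs{\cdot}^{r}\hat f\in L^2(\R^\dimens)$; combined with $\hat f\in L^1_{\mathrm{loc}}(\R^\dimens)$, inherited from $f\in\dot{H}^{s_0}(\R^\dimens)$, this gives $f\in\dot{H}^{r}(\R^\dimens)$, and taking square roots yields the asserted estimate.

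I do not anticipate any genuine obstacle: once the weight is split the whole argument is a one-line H\"older estimate. The only points deserving a word of care are the bookkeeping for the membership $f\in\dot{H}^r(\R^\dimens)$, which requires citing the $L^1_{\mathrm{loc}}$-membership of $\hat f$ in addition to the finiteness of the weighted $L^2$ integral, and the harmless remark that one may ignore $\xi=0$ when factoring $\abs{\xi}^{2r}$.
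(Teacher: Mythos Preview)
Your proof is correct and is exactly the approach the paper has in mind: the paper does not spell out the argument but simply remarks that the lemma ``is a simple consequence of H\"older's inequality'' and cites \cite[Proposition 1.32]{BCD-fourier-analysis-nonlinear-pde}, which is precisely the Fourier-side splitting and H\"older step you carry out. (One tiny quibble: $s_0=s_1$ is not equivalent to $\theta\in\{0,1\}$, but both are trivial cases and your subsequent assumption $s_0<s_1$, $\theta\in(0,1)$ is the right reduction.)
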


Using the interpolation lemma and the usual Poincar\'e inequality we can easily prove the following theorem. Note that we also obtain explicit constant from the proof.

\begin{theorem}[Poincar\'e inequality]
\label{thm:poincareinterpolation}
Let $s\geq t\geq 1$ or $s\geq 1\geq t\geq 0$,  $\Omega\subset\R^\dimens$ bounded open set  and $u\in\widetilde{H}^s(\Omega)$. The following inequality holds
\begin{equation}
\label{eq:poincare}
\aabs{(-\Delta)^{t/2} u}_{L^2(\R^\dimens)}\leq C^{s-t}\aabs{(-\Delta)^{s/2}u}_{L^2(\R^\dimens)}
\end{equation}
where $C=C(\dimens, \Omega)$ is the classical Poincar\'e constant.
\end{theorem}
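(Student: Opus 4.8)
The plan is to deduce \eqref{eq:poincare} from the log-convexity in the exponent of the homogeneous Sobolev norms (Lemma \ref{lemma:interpolationhomogeneoussobolev}), the only analytic input being the classical Poincar\'e inequality $\|u\|_{L^2}\le C\|(-\Delta)^{1/2}u\|_{L^2}$. First I would reduce to $u\in C_c^\infty(\Omega)$ with $u\not\equiv0$: for $0\le t\le s$ both $u\mapsto\|(-\Delta)^{t/2}u\|_{L^2(\R^\dimens)}$ and $u\mapsto\|(-\Delta)^{s/2}u\|_{L^2(\R^\dimens)}$ are dominated by $\|u\|_{H^s(\R^\dimens)}$, hence continuous on $H^s(\R^\dimens)$, so it suffices to prove \eqref{eq:poincare} on the dense subspace $C_c^\infty(\Omega)$; the case $u\equiv0$ is trivial.

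For such $u$ set $F(r):=\|(-\Delta)^{r/2}u\|_{L^2(\R^\dimens)}=\bigl(\int_{\R^\dimens}|\xi|^{2r}|\hat u(\xi)|^2\,\der\xi\bigr)^{1/2}$. Since $u\not\equiv0$ and $u\in L^2(\R^\dimens)\cap\dot H^s(\R^\dimens)$, we have $F(r)\in(0,\infty)$ for every $r\in[0,s]$, and Lemma \ref{lemma:interpolationhomogeneoussobolev} (applied with $s_0=0$, $s_1=s$) gives $u\in\dot H^r(\R^\dimens)$ for all $r\in[0,s]$ together with $F(r)\le F(r_0)^{1-\theta}F(r_1)^{\theta}$ whenever $r=(1-\theta)r_0+\theta r_1$ with $0\le r_0\le r_1\le s$; in other words $g(r):=\log F(r)$ is convex on $[0,s]$. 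The classical Poincar\'e inequality combined with the Plancherel identity $\|\nabla u\|_{L^2}=\|(-\Delta)^{1/2}u\|_{L^2}$ yields $F(0)\le C F(1)$, i.e. the chord slope of $g$ over $[0,1]$ satisfies $g(1)-g(0)\ge-\log C$.

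It then remains to propagate this bound to the interval $[t,s]$ by convexity alone. Assume $t<s$ (for $t=s$ the claim is trivial). If $s\ge t\ge1$, the three-chord inequality for the convex function $g$ on the ordered points $0<1\le t<s$ gives
\[
\frac{g(s)-g(t)}{s-t}\ \ge\ \frac{g(t)-g(0)}{t}\ \ge\ \frac{g(1)-g(0)}{1}\ \ge\ -\log C .
\]
If instead $s\ge1\ge t\ge0$, then $\frac{g(s)-g(t)}{s-t}$ is the convex combination $\frac{1-t}{s-t}\cdot\frac{g(1)-g(t)}{1-t}+\frac{s-1}{s-t}\cdot\frac{g(s)-g(1)}{s-1}$ of the chord slopes of $g$ over $[t,1]$ and over $[1,s]$, and the three-chord inequality shows each of these is again $\ge g(1)-g(0)\ge-\log C$; hence so is the combination. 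In both regimes $g(t)-g(s)\le(s-t)\log C$, i.e. $\|(-\Delta)^{t/2}u\|_{L^2(\R^\dimens)}\le C^{\,s-t}\|(-\Delta)^{s/2}u\|_{L^2(\R^\dimens)}$, which is \eqref{eq:poincare}.

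I do not expect a genuine obstacle: the argument is elementary once the log-convexity picture is in place. The only points needing care are verifying that Lemma \ref{lemma:interpolationhomogeneoussobolev} is available on all of $[0,s]$ (which reduces to $u\in L^2(\R^\dimens)\cap\dot H^s(\R^\dimens)$, immediate for $u\in C_c^\infty(\Omega)$) and the bookkeeping when $[t,s]$ straddles the exponent $1$, handled by the convex-combination step above.
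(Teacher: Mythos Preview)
Your proof is correct and follows essentially the same approach as the paper: both reduce to $u\in C_c^\infty(\Omega)$ by density, use the classical Poincar\'e inequality $\|u\|_{L^2}\le C\|u\|_{\dot H^1}$ as the sole analytic input, and then propagate this via the interpolation lemma for homogeneous Sobolev norms (Lemma~\ref{lemma:interpolationhomogeneoussobolev}). The paper carries out the interpolation explicitly---choosing endpoints $(0,1,t)$ and then $(0,t,s)$ in the first regime, and $(t,1,s)$ followed by $(0,t,1)$ in the second---and simplifies the resulting exponents by hand; your repackaging via the convexity of $g(r)=\log\|(-\Delta)^{r/2}u\|_{L^2}$ and the three-chord inequality is precisely the same computation in a slightly more conceptual dress.
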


\begin{proof}
Let $s\geq t\geq 1$ and $u\in C^{\infty}_c(\Omega)$. The usual Poincar\'e inequality can be written in terms of the homogeneous Sobolev norm as
$$
\aabs{u}_{L^2(\R^\dimens)}=\aabs{u}_{L^2(\Omega)}\leq C\aabs{\nabla u}_{L^2(\Omega)}= C\aabs{\nabla u}_{L^2(\R^\dimens)}=C\aabs{u}_{\dot{H}^1(\R^\dimens)}
$$
where $C=C(n, \Omega)$. We use the interpolation lemma \ref{lemma:interpolationhomogeneoussobolev} twice. First choose $s_0=0$, $r=1$ and $s_1=t\geq 1$. Now $\theta=1/t$ and by the classical Poincar\'e inequality we obtain
\begin{align*}
\aabs{u}_{\dot{H}^1(\R^\dimens)}\leq\aabs{u}^{1-\theta}_{L^2(\R^\dimens)}\aabs{u}^{\theta}_{\dot{H}^t(\R^\dimens)}
\leq C^{1-\theta}\aabs{u}^{1-\theta}_{\dot{H}^1(\R^\dimens)}\aabs{u}^{\theta}_{\dot{H}^t(\R^\dimens)}.
\end{align*}
From this we get the following inequality  
$$
\aabs{u}_{\dot{H}^1(\R^\dimens)}\leq C^{\frac{1-\theta}{\theta}}\aabs{u}_{\dot{H}^t(\R^\dimens)}
$$
for all $u\in C^{\infty}_c(\Omega)$. Hence
\begin{align*}
\aabs{u}_{L^2(\R^\dimens)}\leq C\aabs{u}_{\dot{H}^1(\R^\dimens)}&\leq C^t\aabs{u}_{\dot{H}^t(\R^\dimens)}.
\end{align*}
Then choose $s_0=0$, $r=t$ and $s_1=s\geq t$ in lemma \ref{lemma:interpolationhomogeneoussobolev}. Now $\theta=t/s$ and by the previous inequality
\begin{align*}
\aabs{u}_{\dot{H}^t(\R^\dimens)}\leq\aabs{u}^{1-\theta}_{L^2(\R^\dimens)}\aabs{u}_{\dot{H}^s(\R^\dimens)}^{\theta}\leq C^{t(1-\theta)}\aabs{u}_{\dot{H}^t(\R^\dimens)}^{1-\theta}\aabs{u}_{\dot{H}^s(\R^\dimens)}^{\theta}.
\end{align*}
From this we obtain 
$$
\aabs{(-\Delta)^{t/2}u}_{L^2(\R^\dimens)}\leq C^{s-t}\aabs{(-\Delta)^{s/2}u}_{L^2(\R^\dimens)}
$$
for $u\in C^{\infty}_c(\Omega)$. 

Let then $s\geq 1\geq t\geq 0$ and $u\in C_c^{\infty}(\Omega)$. First interpolate for $s\geq 1\geq t$ to obtain
$$
\aabs{u}_{\dot{H}^1(\R^\dimens)}\leq \aabs{u}^{1-\theta}_{\dot{H}^t(\R^\dimens)}\aabs{u}^{\theta}_{\dot{H}^s(\R^\dimens)}, \quad \theta=\frac{1-t}{s-t}.
$$
Second, interpolate for $1\geq t\geq 0$ and use the previous inequality and the classical Poincar\'e inequality to get
$$
\aabs{u}_{\dot{H}^t(\R^\dimens)}\leq\aabs{u}_{L^2(\R^\dimens)}^{1-\tilde{\theta}}\aabs{u}_{\dot{H}^1(\R^\dimens)}^{\tilde{\theta}}\leq C^{1-\tilde{\theta}}\aabs{u}_{\dot{H}^t(\R^\dimens)}^{1-\theta}\aabs{u}_{\dot{H}^s(\R^\dimens)}^{\theta}, \quad \tilde{\theta}=t,
$$
which eventually implies the inequality
$$
\aabs{(-\Delta)^{t/2}u}_{L^2(\R^\dimens)}=\aabs{u}_{\dot{H}^t(\R^\dimens)}\leq C^{s-t}\aabs{u}_{\dot{H}^s(\R^\dimens)}=C^{s-t}\aabs{(-\Delta)^{s/2}u}_{L^2(\R^\dimens)}
$$
for all $u\in C^{\infty}_c(\Omega)$.

Then let $u\in\widetilde{H}^s(\Omega)$. By definition there is a sequence $\varphi_k\in C_c^{\infty}(\Omega)$ such that
$$
\varphi_k\rightarrow u \quad \text{in} \ H^s(\R^\dimens).
$$
The continuity of $(-\Delta)^{t/2}$ implies that 
$$
(-\Delta)^{t/2}\varphi_k\rightarrow (-\Delta)^{t/2}u \quad \text{in} \ H^{s-t}(\R^\dimens).
$$
The embedding $H^{s-t}(\R^\dimens)\hookrightarrow L^2(\R^\dimens)$ is continuous and thus
$$
(-\Delta)^{t/2}\varphi_k\rightarrow (-\Delta)^{t/2}u \quad \text{in} \ L^2(\R^\dimens).
$$
By the continuity of the norm and $(-\Delta)^{s/2}$ we finally obtain
\begin{align*}
\aabs{(-\Delta)^{t/2}u}_{L^2(\R^\dimens)}=\lim_k\aabs{(-\Delta)^{t/2}\varphi_k}_{L^2(\R^\dimens)}&\leq C^{s-t}\lim_k\aabs{(-\Delta)^{s/2}\varphi_k}_{L^2(\R^\dimens)} \\ &=C^{s-t}\aabs{(-\Delta)^{s/2}u}_{L^2(\R^\dimens)}. \qedhere
\end{align*}
\end{proof}

We remark that the case $t=0$ and $s=1$ corresponds to the classical Poincar\'e inequality since $\aabs{\nabla u}_{L^2(\R^\dimens)}=\aabs{(-\Delta)^{1/2}u}_{L^2(\R^\dimens)}$. Also the constant $C^{s-t}$ is the expected one. In the usual Poincar\'e inequality we take one derivative and the constant is $C$. In the higher order version we take $t$ and $s$ derivatives and the constant naturally becomes~$C^{s-t}$. The constant~$C$ can be taken to be proportional to the diameter of the set, $C\approx\text{diam}(\Omega)$.

\begin{remark}
Fifth way to prove the Poincar\'e inequality is using uncertainty inequalities. If $u\in L^2(\R^\dimens)$, then there is a constant $c=c(\dimens, s)$ such that
\begin{equation}
\label{eq:uncertainty}
\aabs{u}_{L^2(\R^\dimens)}^2\leq c\aabs{\abs{\cdot}^s u}_{L^2(\R^\dimens)}\aabs{\abs{\cdot}^s \hat{u}}_{L^{2}(\R^\dimens)},
\end{equation}
see the discussion after theorem 4.1 in \cite{FS-the-uncertainty-principle}. We can interpret this inequality as
$$
\aabs{u}_{L^2(\R^\dimens)}^2\leq c\aabs{(-\Delta)^{s/2}(\ifourier (u))}_{L^2(\R^\dimens)}\aabs{(-\Delta)^{s/2} u}_{L^{2}(\R^\dimens)}
$$
whenever the terms on the right hand side of equation \eqref{eq:uncertainty} are finite. If $u$ is supported in some fixed compact set $K$, then one obtains similar inequality as in theorem~\ref{thm:poincarechineseguy}, i.e.
$$
\aabs{u}_{L^2(\R^\dimens)}\leq c^{\prime}\aabs{(-\Delta)^{s/2}u}_{L^{2}(\R^\dimens)}
$$
holds for all $u\in H^s_K(\R^\dimens)$ and for some constant $c^{\prime}=c^{\prime}(\dimens, K, s)$.
\end{remark}

\begin{remark}
The Poincar\'e inequality for the operator $(-\Delta)^{s/2}$ implies also Poincar\'e inequality for the fractional gradient $\nabla^s\colon H^s(\R^\dimens)\rightarrow L^2(\mathbb R^{2n}, \mathbb M^{\floor{s}+1})$ which is defined as
$$
\nabla^s u(x,y) := \frac{\mathcal C_{n,s'}^{1/2}}{\sqrt 2} \frac{ \nabla^{\floor{s}}u(x) - \nabla^{\floor{s}}u(y) }{|y-x|^{n/2 +s'+1}} \otimes (y-x),
$$
see section \ref{sec:magneticschrodinger} for more details. If $s\geq t\geq 0$ and $u\in C^\infty_c(\Omega)$, then
\begin{align}
\aabs{\nabla^t u}_{L^2(\mathbb R^{2n}, \mathbb M^{\floor{s}+1})}=\aabs{(-\Delta)^{t/2}u}_{L^2(\R^\dimens)}\leq\tilde{c}\aabs{(-\Delta)^{s/2}u}_{L^2(\R^\dimens)}=\tilde{c}\aabs{\nabla^s u}_{L^2(\mathbb R^{2n}, \mathbb M^{\floor{s}+1})},
\end{align}
where the constant $\tilde{c}$ does not depend on $u$. By approximation and the continuity of $\nabla^s$ the previous inequality is also true for $u\in\widetilde{H}^s(\Omega)$.
\end{remark}

\section{Applications to integral geometry}
\label{sec:applicationstointegralgeometry}
In this section we discuss how the UCP of Riesz potentials can be used in partial data problems in integral geometry. We follow \cite{HE:integral-geometry-radon-transforms} for the treatment of the $d$-plane transform, theory of X-ray transform and Radon transform can also be found in \cite{NA-mathematics-computerized-tomography, RK-radon-transform-an-local-tomography, SU:microlocal-analysis-integral-geometry}. Let $d\in \{1,\dotso, \dimens-1\}$ and denote by $\mathbf{P}^d$ the space of all $d$-dimensional affine planes in $\R^\dimens$. We define the $d$-plane transform of a function~$f$ to be
$$
\dplane f(A)=\int_{x\in A}f(x)\der m(x)
$$
where $A\in\mathbf{P}^d$ and $m$ is the Hausdorff measure on $A$. The adjoint of $\dplane$ is defined as 
$$
\dplane^* g(x)=\int_{A\ni x}g(A)\der\mu(A)
$$
where $g$ is a function on $\mathbf{P}^d$ and $\mu$ is the associated measure. These transforms are defined for functions such that the integrals exist. The case $d=1$ corresponds to the usual X-ray transform and $d=n-1$ to the Radon transform. The normal operator of the $d$-plane transform $\nod=\dplane^*\dplane$ has an expression $\nod f =c_{\dimens, d}(f\ast\abs{\cdot}^{-(\dimens-d)})$ where $c_{\dimens, d}$ is a constant depending on $\dimens$ and $d$. The normal operator is well defined if $f$ is a function that decreases rapidly enough at infinity \cite{HE:integral-geometry-radon-transforms}. This holds for example if $f\in C_{\infty}(\R^\dimens)$ where $C_\infty(\R
^\dimens)$ is the space of continuous functions which decrease faster than any polynomial at infinity (see section \ref{sec:distributions} for a precise definition). Thus, up to a constant factor, $\nod$ can be represented as a Riesz potential $\nod=\riesz=(-\Delta)^{-d/2}$ where $\alpha=\dimens-d\in \{1,\dotso, \dimens-1\}$.

The transforms $\dplane$ and $\dplane^*$ can be extended to distributions by duality. Let $f\in\cdistr(\R^\dimens)$ and $g\in\distr(\R^\dimens)$. Since $\dplane\colon\csmooth(\R^\dimens)\rightarrow\csmooth(\mathbf{P}^d)$ and $\dplane^*\colon\smooth(\mathbf{P}^d)\rightarrow\smooth(\R^\dimens)$ are continuous \cite{GO-range-of-d-plane-transform}, we can define the following operations
\begin{align*}
\ip{\dplane f}{\psi}&=\ip{f}{\dplane^*\psi}, \quad \psi\in \smooth(\mathbf{P}^d) \\
\ip{\dplane^* g}{\varphi}&=\ip{g}{\dplane\varphi}, \quad \varphi\in\csmooth(\R^\dimens).
\end{align*}
Therefore $\dplane f\in\cdistr(\mathbf{P}^d)$ and $\dplane^* g\in\distr(\R^\dimens)$. This shows that the normal operator $\nod=\dplane^*\dplane\colon\cdistr(\R^\dimens)\rightarrow\distr(\R^\dimens)$ is always defined and $\nod f=c_{n,d}(f\ast\abs{\cdot}^{-(n-d)})$ holds in the sense of distributions. Let $V\subset\R^\dimens$ be a nonempty open set and $f\in\cdistr(\R^\dimens)$. We say that $\dplane f$ vanishes on all $d$-planes intersecting $V$, if $\ip{\dplane f}{\varphi}=0$ for all $\varphi\in C_c^{\infty}(\mathbf{P}^d_V)$ where~$\mathbf{P}^d_V$ is the set of all $d$-planes intersecting~$V$. If $V=B(0, R)$ is a ball, $\varphi\in C_c^{\infty}(\mathbf{P}^d_V)$ means that $\varphi$ is smooth and $\varphi(A)=0$ for all $d$-planes $A$ for which $d(0, A)>r$ for some $r<R$. For more details on the range of the $d$-plane transform and duality in integral geometry, see \cite{GO-range-of-d-plane-transform} and \cite[Chapter II]{HE:integral-geometry-radon-transforms}.

\begin{remark}
\label{remark:ucpofdnormaloperator}
The UCP of Riesz potentials (corollary \ref{cor:uniquecontinuationofriespotential}) immediately implies the UCP of the normal operator of the $d$-plane transform when $d$ is odd (corollary \ref{cor:uniquecontinuationnormaloperator}) since~$\nod\approx I_{\dimens-d}$ and $d/2\not\in\Z$. However, such UCP cannot hold if $d$ is even, which can be shown by contradiction. Assume that corollary \ref{cor:uniquecontinuationnormaloperator} holds when $d$ is even. Take any nonzero $f\in C
^\infty_c(\R^\dimens)$. By the properties of the Fourier transform and Riesz potentials we have $(-\Delta)^{d/2}f=(-\Delta)^{-d/2}((-\Delta)^d f)=\nod(-\Delta)^d f$ up to a constant factor. Since $d$ is even $(-\Delta)^{d/2}$ is a local operator and we obtain $\nod(-\Delta)^d f|_V=(-\Delta)^d f|_V=0$ where $V\subset\R^\dimens$ is an open set outside the support of $f$ and $(-\Delta)^df\in C_c^\infty(\R^\dimens)$. By the assumption we would get that $(-\Delta)
^df=0$, i.e. $f$ is polyharmonic. But this implies $f=0$ by lemma \ref{lemma:polyharmoniccontinuation}, which is a contradiction. Hence the UCP cannot hold for $\nod$ when $d$ even.
\end{remark} 

Using the UCP of~$\nod$ we can now prove corollary \ref{cor:partialdataresult}.

\begin{proof}[Proof of corollary 1.3.]
Consider first $f\in C_{\infty}(\R^\dimens)$. Taking the adjoint, we get the conditions $\nod f|_V=0$ and $f|_V=0$. By corollary \ref{cor:uniquecontinuationnormaloperator} we obtain $f=0$ whenever $d$ is odd. Then let $f\in\cdistr(\R^\dimens)$. We can assume that $V=B(0, R)$ is a ball of radius $R$ centered at the origin. As in \cite{HE:integral-geometry-radon-transforms} we define the ``convolution" 
$$
(g\times\varphi)(A)=\int_{\R^\dimens}g(y)\varphi(A-y)\der y
$$
where $g\in C^{\infty}_c(\R^\dimens)$, $\varphi\in C^{\infty}_c(\mathbf{P}^d)$, $A\in\mathbf{P}^d$ and $A-y$ is a $d$-plane shifted by $y\in\R^\dimens$. Then one can calculate that $\dplane^*(g\times\varphi)=g\ast\dplane^*\varphi$ (see \cite[Proof of theorem 5.4]{HE:integral-geometry-radon-transforms}). Let $j_{\epsilon}\in C_c^{\infty}(\R^\dimens)$ be the standard mollifier and consider the mollifications $f\ast j_{\epsilon}\in C_c^{\infty}(\R^\dimens)$. By the properties of the convolutions
\begin{equation}
\label{eq:dplanemollifier}
\ip{\dplane(f\ast j_{\epsilon})}{\varphi}=\ip{f\ast j_{\epsilon}}{\dplane^*\varphi}=\ip{f}{j_{\epsilon}\ast\dplane^*\varphi}=\ip{f}{\dplane^*(j_{\epsilon}\times\varphi)}=\ip{\dplane f}{j_{\epsilon}\times\varphi}.
\end{equation}
Take $r>0$ and $\epsilon>0$ small enough so that $r+\epsilon<R$. Let $\varphi\in C_c^{\infty}(\mathbf{P}^d)$ such that $\varphi(A)=0$ for all $d$-planes which satisfy $d(0, A)>r$. Then $(j_{\epsilon}\times\varphi)(A)=0$ for all $d$-planes for which $d(0, A)>r+\epsilon$. Thus $j_{\epsilon}\times\varphi\in C_c^{\infty}(\mathbf{P}_V^d)$ and by \eqref{eq:dplanemollifier} it follows that $\dplane(f\ast j_{\epsilon})=0$ for all $d$-planes intersecting $B(0, r)$. We also have $(f\ast j_{\epsilon})|_{B(0, r)}=0$ and the first part of the proof implies the claim for $f\ast j_{\epsilon}$ for small $\epsilon>0$. Since $f\ast j_{\epsilon}\rightarrow f$ in $\cdistr(\R^\dimens)$ when $\epsilon\rightarrow 0$, we obtain the claim for~$f$.
\end{proof}

\begin{remark} 
\label{remark:partialdata}
When $d$ is even, corollary \ref{cor:partialdataresult} does not say that the result is false. It only indicates that we cannot use the UCP of the normal operator in the proof. This boils down to the fact that $\fraclaplace$ does not admit the UCP for $s\in\Z$. However, if $d$ is even, then the function $f$ is determined uniquely in $V$ by its integrals over $d$-planes which intersect $V$. Namely, if $\dplane f=0$ on all $d$-planes intersecting $V$, then $\nod f|_V=0$. Since $\nod\approx (-\Delta)^{-d/2}$, one can invert $\nod f$ by the local operator $(-\Delta)^{d/2}$ to obtain $f|_V=0$. Hence the ROI problem is uniquely solvable in this case without the additional knowledge of $f$ in an open set inside the ROI.
\end{remark}

\begin{remark}
We also note that unlike in the global data case lower dimensional data does not determine higher dimensional data. In other words, $R_k f=0$ for all $k$-planes intersecting~$V$ does not necessarily imply that $\dplane f=0$ for all $d$-planes which intersect~$V$ where $0<k<d<\dimens$. Thus one cannot reduce the partial data problem for $k$-planes to the partial data problem for $d$-planes.
\end{remark}

\section{Higher order fractional Schr\"odinger equation with singular potential}
\label{sec:schrodingerequation}

In this section, we study the fractional Schrödinger equation with higher order fractional Laplacian and singular potential. Let $\Omega\subset\R^\dimens$ be a bounded open set, $s\in\R^+\setminus\Z$ and consider the equation
\begin{align}\label{eq:schrodingerequation} \left\{\begin{array}{rl}
        (\fraclaplace+q)u&=0  \;\;\text{in}\;  \Omega\\
        u|_{\Omega_e} &=f
        \end{array}\right.\end{align}
where $u\in H^s(\R^\dimens)$, $f\in H^s(\R^\dimens)$ is the exterior value of $u$ and $q\in L^{\infty}(\Omega)$ represents the electric potential. When the potential $q$ is more singular one has to interpret the product $qu$ in a suitable way. If $q\in Z_0^{-s}(\R^\dimens)$, then $q$ acts as a multiplier and induces a map $m_q\colon H^s(\R^\dimens)\rightarrow H^{-s}(\R^\dimens)$ defined by $\ip{m_q(u)}{v}_{\R^\dimens}=\ip{q}{uv}_{\R^\dimens}$. Then equation~\eqref{eq:schrodingerequation} becomes
\begin{align}\label{eq:singularschrodingerequation} \left\{\begin{array}{rl}
        \fraclaplace u+m_q(u)&=0  \;\;\text{in}\;  \Omega\\
        u|_{\Omega_e} &=f.
        \end{array}\right.\end{align}
We will prove that the generalized DN map~$\Lambda_q$ for equation~\eqref{eq:singularschrodingerequation} determines the restriction of the potential $q\in Z_0^{-s}(\R^\dimens)$ to~$\Omega$ uniquely from exterior measurements. We also obtain the Runge approximation property for equation~\eqref{eq:singularschrodingerequation}: any function $g\in \widetilde{H}^s(\Omega)$ can be approximated arbitrarily well by solutions of \eqref{eq:singularschrodingerequation}.

Similar results were proved in \cite{RS-fractional-calderon-low-regularity-stability} when $0<s<1$. Our theorems generalize those results for higher order fractional Laplacians. The proofs rely essentially on the same thing: the UCP of the operator $\fraclaplace$ which was proved for $s\in\R^+\setminus\Z$ in section \ref{subsec:uniquecontinuationresults}. Also the higher order Poincar\'e inequality is needed for the well-posedness of the inverse problem. In this section, we provide the basic ideas for the proofs of the lemmas, which are reminescent of the ones in~\cite{RS-fractional-calderon-low-regularity-stability} and \cite{GSU-calderon-problem-fractional-schrodinger}. We will mainly follow the same notation as in those articles.

The strategy to prove theorems \ref{thm:schrodingeruniqueness} and \ref{thm:schrodingerrungeapproximation} is the following. First one constructs a bilinear form and proves that unique weak solutions are obtained in the complement of a countable set of eigenvalues. One also proves that $0$ is not an eigenvalue when \eqref{def:zeronotdirichleteigenvalue} holds. Then one defines the abstract DN map and proves the Alessandrini identity using it. Using the UCP of $\fraclaplace$ one obtains the Runge approximation property for equation~\eqref{eq:singularschrodingerequation}. From the Runge approximation and the Alessandrini identity, one can prove the uniqueness result for the inverse problem.

If $U\subset\R^\dimens$ is open and $u, v\in L^2(U)$, we denote the inner product by
$$
\ip{u}{v}_U=\int_U uv\der x.
$$
We also use the same notation $\ip{\cdot}{\cdot}_U$ for dual pairing.

The following lemma guarantees the existence of unique weak solutions (see \cite[Lemma 2.6]{RS-fractional-calderon-low-regularity-stability}). 

\begin{lemma}
\label{lemma:schrodingerexistenceofsolutions}
Let $\Omega\subset\R^\dimens$ be bounded open set, $s\in\R^+\setminus\Z$ and $q\in Z^{-s}_0(\R^\dimens)$. For $v, w\in H^s(\R^\dimens)$ define the bilinear form $B_q$ as
$$
B_q(v, w)=\ip{(-\Delta)^{s/2}v}{(-\Delta)^{s/2}w}_{\R^\dimens}+ \ip{m_q(v)}{w}_{\R^\dimens}.
$$
The following claims hold:
\begin{enumerate}[(a)]
    \item There is a countable set $\Sigma=\{\lambda_i\}_{i=1}^{\infty}\subset\R$, $\lambda_1\leq\lambda_2\leq\dotso\rightarrow\infty$, with the following property: if $\lambda\notin\Sigma$, then for any $F\in (\widetilde{H}^s(\Omega))^*$ and $f\in H^s(\R^\dimens)$ there is unique $u\in H^s(\R^\dimens)$ satisfying
    $$
    B_q(u, w)-\lambda\ip{u}{w}_{\R^\dimens}=F(w) \ \ \text{for} \ w\in \widetilde{H}^s(\Omega), \quad u-f\in \widetilde{H}^s(\Omega)
    $$
    with the norm estimate
    $$
    \aabs{u}_{H^s(\R^\dimens)}\leq C\Big(\aabs{F}_{(\widetilde{H}^s(\Omega))^*}+\aabs{f}_{H^s(\R^\dimens)}\Big)
    $$
    where $C$ is independent of $F$ and $f$.
    \item The function $u$ in $(a)$ is the unique $u\in H^s(\R^\dimens)$ satisfying
    $$
    (\fraclaplace u+m_q(u)-\lambda u)|_{\Omega}=F
    $$
    in the sense of distributions with $u-f\in\widetilde{H}^s(\Omega)$.
    \item One has $0\notin\Sigma$ if \eqref{def:zeronotdirichleteigenvalue} holds. If $q\in L^{\infty}(\Omega)$ and $q\geq 0$, then $\Sigma\subset (0, \infty)$ and \eqref{def:zeronotdirichleteigenvalue} always holds.
\end{enumerate}
\end{lemma}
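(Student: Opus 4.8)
The plan is to read off all three claims from the Fredholm/spectral theory of the symmetric bounded sesquilinear form $B_q$ on $\widetilde{H}^s(\Omega)$, with the fractional Poincar\'e inequality (Theorem~\ref{thm:generalpoincare}) supplying coercivity. First I would record boundedness of $B_q$ on $H^s(\R^\dimens)\times H^s(\R^\dimens)$: the Dirichlet part is controlled by the continuity of $(-\Delta)^{s/2}\colon H^s(\R^\dimens)\to L^2(\R^\dimens)$, and the potential part by the multiplier bound $|\ip{m_q(v)}{w}_{\R^\dimens}|\leq \aabs{q}_{Z^{-s}}\aabs{v}_{H^s(\R^\dimens)}\aabs{w}_{H^s(\R^\dimens)}$. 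The crux is to exhibit $\mu\geq 0$ so that $(v,w)\mapsto B_q(v,w)+\mu\ip{v}{w}_{\R^\dimens}$ is coercive on $\widetilde{H}^s(\Omega)$. Theorem~\ref{thm:generalpoincare} with $t=0$ gives $\aabs{(-\Delta)^{s/2}v}_{L^2(\R^\dimens)}^2\gtrsim\aabs{v}_{H^s(\R^\dimens)}^2$ on $\widetilde{H}^s(\Omega)$. The multiplier term is bounded but not small; since $q\in Z_0^{-s}(\R^\dimens)$ is by definition approximable in $Z^{-s}$ by $C_c^\infty(\Omega)$ functions, I would split $q=q_1+q_2$ with $q_1\in C_c^\infty(\Omega)$ and $\aabs{q_2}_{Z^{-s}}$ small enough that $|\ip{m_{q_2}(v)}{v}_{\R^\dimens}|$ is absorbed into the coercive Dirichlet part, while $|\ip{m_{q_1}(v)}{v}_{\R^\dimens}|\leq\aabs{q_1}_{L^\infty(\R^\dimens)}\aabs{v}_{L^2(\R^\dimens)}^2$ is absorbed by taking $\mu=\aabs{q_1}_{L^\infty(\R^\dimens)}$. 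Lax--Milgram then makes $L:=B_q+\mu\ip{\cdot}{\cdot}_{\R^\dimens}$ an isomorphism $\widetilde{H}^s(\Omega)\to(\widetilde{H}^s(\Omega))^*$.

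For part~(a) I would write $B_q(v,w)-\lambda\ip{v}{w}_{\R^\dimens}=L(v,w)-(\lambda+\mu)\ip{v}{w}_{\R^\dimens}$. Because $\Omega$ is bounded, the embedding $\widetilde{H}^s(\Omega)\hookrightarrow L^2(\Omega)$ is compact (Rellich), so composing $L^{-1}$ with it yields a compact, self-adjoint, positive operator on $L^2(\Omega)$; let its eigenvalues be $\kappa_1\geq\kappa_2\geq\dotsb\to 0^+$ and set $\Sigma=\{\lambda_i:=\kappa_i^{-1}-\mu\}$, so that $\lambda_1\leq\lambda_2\leq\dotsb\to\infty$. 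For $\lambda\notin\Sigma$ I would substitute $u=v+f$ with $v\in\widetilde{H}^s(\Omega)$, reducing the problem to finding $v\in\widetilde{H}^s(\Omega)$ with $B_q(v,w)-\lambda\ip{v}{w}_{\R^\dimens}=F(w)-B_q(f,w)+\lambda\ip{f}{w}_{\R^\dimens}$ for all $w\in\widetilde{H}^s(\Omega)$; rewriting this through $L^{-1}$ as a perturbation of the identity by a compact operator whose spectrum avoids $(\lambda+\mu)^{-1}$, the Fredholm alternative provides a unique solution, and the stated norm estimate follows from boundedness of the relevant inverse operators together with $\aabs{F(\cdot)-B_q(f,\cdot)+\lambda\ip{f}{\cdot}_{\R^\dimens}}_{(\widetilde{H}^s(\Omega))^*}\lesssim\aabs{F}_{(\widetilde{H}^s(\Omega))^*}+\aabs{f}_{H^s(\R^\dimens)}$.

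For part~(b) I would test the weak identity against $w\in C_c^\infty(\Omega)\subset\widetilde{H}^s(\Omega)$: since $(-\Delta)^{s/2}$ is self-adjoint and $(-\Delta)^{s/2}(-\Delta)^{s/2}=\fraclaplace$ we have $\ip{(-\Delta)^{s/2}u}{(-\Delta)^{s/2}w}_{\R^\dimens}=\ip{\fraclaplace u}{w}_{\R^\dimens}$, and $\ip{m_q(u)}{w}_{\R^\dimens}=\ip{q}{uw}_{\R^\dimens}$, so the weak equation is exactly $(\fraclaplace u+m_q(u)-\lambda u)|_\Omega=F$ in $\distr(\Omega)$; uniqueness is inherited from~(a), since two such $u$ differ by an element of $\widetilde{H}^s(\Omega)$ solving the homogeneous problem. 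For part~(c): if~\eqref{def:zeronotdirichleteigenvalue} holds and $0\in\Sigma$, an associated eigenfunction $u\in\widetilde{H}^s(\Omega)\subset H^s(\R^\dimens)$ satisfies, by~(b), $\fraclaplace u+m_q(u)=0$ in $\Omega$ with $u|_{\Omega_e}=0$, forcing $u=0$ by~\eqref{def:zeronotdirichleteigenvalue}, a contradiction; hence $0\notin\Sigma$. If moreover $q\in L^\infty(\Omega)$ and $q\geq 0$, then for any $0\neq u\in\widetilde{H}^s(\Omega)$ one has $B_q(u,u)=\aabs{(-\Delta)^{s/2}u}_{L^2(\R^\dimens)}^2+\ip{q}{|u|^2}_{\R^\dimens}\geq\aabs{(-\Delta)^{s/2}u}_{L^2(\R^\dimens)}^2>0$ by Theorem~\ref{thm:generalpoincare}; applying this to an eigenpair, $\lambda\aabs{u}_{L^2(\R^\dimens)}^2=B_q(u,u)>0$ forces $\lambda>0$, so $\Sigma\subset(0,\infty)$, and then $0\notin\Sigma$ gives~\eqref{def:zeronotdirichleteigenvalue} through the already-established case $\lambda=0$ of (a)--(b).

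The only step that requires genuine care is the coercivity claim in the first paragraph --- converting the merely bounded multiplier $m_q$ into an absorbable perturbation. The decisive input is the density definition of $Z_0^{-s}(\R^\dimens)$, which makes the splitting $q=q_1+q_2$ available; everything else is the standard Lax--Milgram/Rellich/compact-self-adjoint-spectral-theorem package, carried out just as in \cite{RS-fractional-calderon-low-regularity-stability} for $s\in(0,1)$ and in \cite{GSU-calderon-problem-fractional-schrodinger}, the higher-order exponent $s$ entering only through the need for the higher-order Poincar\'e inequality of Theorem~\ref{thm:generalpoincare}.
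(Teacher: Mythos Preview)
Your approach is essentially identical to the paper's: the same Poincar\'e-based coercivity on $\widetilde H^s(\Omega)$, the same splitting $q=q_1+q_2$ coming from the density definition of $Z_0^{-s}$, the same compact-operator/spectral-theorem machinery for (a), and the same density argument for (b). The paper in fact takes $\mu=\aabs{q_1^-}_{L^\infty}$ rather than your $\aabs{q_1}_{L^\infty}$, but this is cosmetic.

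There is one genuine subtlety in your treatment of the second sentence of (c). You deduce $\Sigma\subset(0,\infty)$ correctly, and then argue that $0\notin\Sigma$ yields \eqref{def:zeronotdirichleteigenvalue} ``through the already-established case $\lambda=0$ of (a)--(b)''. But (a)--(b) only give uniqueness among $u$ with $u-f\in\widetilde H^s(\Omega)$; with $f=0$ this is uniqueness in $\widetilde H^s(\Omega)$. Condition \eqref{def:zeronotdirichleteigenvalue}, however, asks for uniqueness among all $u\in H^s(\R^\dimens)$ with $u|_{\Omega_e}=0$, i.e.\ $u\in H^s_{\overline\Omega}(\R^\dimens)$. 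Since no regularity on $\partial\Omega$ is assumed, $\widetilde H^s(\Omega)$ may be a proper subspace of $H^s_{\overline\Omega}(\R^\dimens)$, and your argument does not rule out a nontrivial solution in $H^s_{\overline\Omega}(\R^\dimens)\setminus\widetilde H^s(\Omega)$. The paper handles this by noting (with a reference to \cite[Lemma~2.3]{GSU-calderon-problem-fractional-schrodinger}) that one should rerun the coercivity/spectral argument with $\widetilde H^s(\Omega)$ replaced by $H^s_{\overline\Omega}(\R^\dimens)$; Poincar\'e (Theorem~\ref{thm:poincarechineseguy}) still applies since $\overline\Omega$ is compact, so the same positivity argument then works on the larger space and delivers \eqref{def:zeronotdirichleteigenvalue} directly.
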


\begin{proof}
The constants in the inequalities do not depend on the function $v$ in the proof. It is enough to solve the problem in $(a)$ for $u-f=v\in\widetilde{H}^s(\Omega)$. 
Using the fractional Poincar\'e inequality (theorem \ref{thm:generalpoincare}) we obtain
$$
\aabs{v}^2_{H^s(\R^\dimens)}\leq C \left( \aabs{(-\Delta)^{s/2}v}^2_{L^2(\R^\dimens)} + \aabs{v}^2_{L^2(\R^\dimens)} \right) \leq C^{\prime}\aabs{(-\Delta)^{s/2}v}^2_{L^2(\R^\dimens)}.
$$
Let $0<\epsilon<1/C^{\prime}$ where the constant $C^{\prime}>0$ comes from the previous inequality. Since $q\in Z^{-s}_0(\R^\dimens)$, we can find $q_s\in C_c^{\infty}(\R^\dimens)$ and $q_r\in Z^{-s}(\R^\dimens)$ such that $q=q_s+q_r$ and $\aabs{q_r}_{Z^{-s}(\R^\dimens)}<\epsilon$. When we take $\mu=\aabs{q_s^-}_{L^{\infty}(\R^\dimens)}$ where $q_s^-=-\min\{0, q_s(x)\}$, we obtain the coercivity condition
\begin{align*}
B_q(v, v)+\mu\ip{v} {v}_{\R^\dimens}&\geq\aabs{(-\Delta)^{s/2}v}_{L^2(\R^\dimens)}^2+\ip{q_r}{vv}_{\R^\dimens}\geq \frac{1}{C^{\prime}}\aabs{v}_{H^s(\R^\dimens)}^2-\epsilon\aabs{v}_{H^s(\R^\dimens)}^2.
\end{align*}
Hence $v, w\mapsto B_q(v, w)+\mu\ip{v}{w}_{\R^\dimens}$ defines an equivalent inner product in $\widetilde{H}^s(\Omega)$. The proof is then completed as in \cite{GSU-calderon-problem-fractional-schrodinger}: the Riesz representation theorem implies that for every $\widetilde{F}\in (\widetilde{H}^s({\Omega}))^*$ there is unique $v=G_{\mu}\widetilde{F}\in\widetilde{H}^s(\Omega)$ such that $B_q(v, w)+\mu\ip{v}{w}_{\R^\dimens}=\widetilde{F}(w)$ for all $w\in\widetilde{H}^s(\Omega)$. The map $G_{\mu}\colon(\widetilde{H}^s(\Omega))^*\rightarrow\widetilde{H}^s(\Omega)$ induces a compact, self-adjoint and positive definite operator $\widetilde{G}_{\mu}\colon L^2(\Omega)\rightarrow L^2(\Omega)$ by the compact Sobolev embedding theorem. The spectral theorem for the self-adjoint compact operator $\widetilde{G}_{\mu}$ implies the claim in $(a)$. Part $(b)$ holds since $C_c^{\infty}(\Omega)$ is dense in $\widetilde{H}^s(\Omega)$. The first claim in $(c)$ follows from the Fredholm alternative. The second claim in $(c)$ is essentially the same as in \cite[Lemma 2.3]{GSU-calderon-problem-fractional-schrodinger} and is proved by replacing $\widetilde{H}^s(\Omega)$ with $H_{\overline{\Omega}}(\R^\dimens)$ in the proof of $(a)$.
\end{proof}

Recall the definition of the abstract trace space $X=H^s(\R^\dimens)/\widetilde{H}^s(\Omega)$ which we equip with the quotient norm
$$
\aabs{[f]}_X=\inf_{\varphi\in\widetilde{H}^s(\Omega)}\aabs{f-\varphi}_{H^s(\R^\dimens)}, \quad f\in H^s(\R^\dimens).
$$
The following lemma implies that the DN map is well-defined and continuous. The result follows immediately from the definition of the bilinear form $B_q(\cdot, \cdot)$ and from the continuity of $(-\Delta)^{s/2}\colon H^s(\R^\dimens)\rightarrow L^2(\R^\dimens)$ (see \cite[Lemma 2.4]{GSU-calderon-problem-fractional-schrodinger}).

\begin{lemma}
\label{lemma:dnmap}
Let $\Omega\subset\R^\dimens$ be bounded open set, $s\in\R^+\setminus\Z$ and $q\in Z_0^{-s}(\R^\dimens)$ which satisfies~\eqref{def:zeronotdirichleteigenvalue}. Then the map $\Lambda_q\colon X\rightarrow X^*$, $\ip{\Lambda_q[f]}{ [g]}=B_q(u_f, g)$, is linear and continuous, where $u_f\in H^s(\R^\dimens)$ solves $\fraclaplace u+m_q(u)=0$ in $\Omega$ with $u-f\in\widetilde{H}^s(\Omega)$. One also has the self-adjointness property $\ip{\Lambda_q[f]}{[g])}=\ip{[f]} {\Lambda_q[g]}$ for $f, g\in H^s(\R^\dimens)$.
\end{lemma}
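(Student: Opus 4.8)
The plan is to verify in turn that $\Lambda_q$ is well-defined on the quotient $X=H^s(\R^\dimens)/\widetilde{H}^s(\Omega)$, bilinear, bounded, and symmetric, all by elementary manipulations of the bilinear form $B_q$ together with the existence and uniqueness statement of lemma \ref{lemma:schrodingerexistenceofsolutions}. Since $q$ satisfies \eqref{def:zeronotdirichleteigenvalue}, part (c) of that lemma gives $0\notin\Sigma$, so for each $f\in H^s(\R^\dimens)$ there is a unique $u_f\in H^s(\R^\dimens)$ with $u_f-f\in\widetilde{H}^s(\Omega)$ and $B_q(u_f,w)=0$ for all $w\in\widetilde{H}^s(\Omega)$ (this is the weak form of $\fraclaplace u+m_q(u)=0$ in $\Omega$ by lemma \ref{lemma:schrodingerexistenceofsolutions}(b)), together with the estimate $\aabs{u_f}_{H^s(\R^\dimens)}\leq C\aabs{f}_{H^s(\R^\dimens)}$. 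I would first record two facts. The form $B_q$ is bounded on $H^s(\R^\dimens)\times H^s(\R^\dimens)$: the Dirichlet term is bounded by $\aabs{(-\Delta)^{s/2}v}_{L^2(\R^\dimens)}\aabs{(-\Delta)^{s/2}w}_{L^2(\R^\dimens)}\leq C\aabs{v}_{H^s(\R^\dimens)}\aabs{w}_{H^s(\R^\dimens)}$, and $\abs{\ip{m_q(v)}{w}_{\R^\dimens}}\leq\aabs{q}_{Z^{-s}(\R^\dimens)}\aabs{v}_{H^s(\R^\dimens)}\aabs{w}_{H^s(\R^\dimens)}$ because $q\in Z^{-s}(\R^\dimens)$ acts as a bounded multiplier $H^s(\R^\dimens)\to H^{-s}(\R^\dimens)$. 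Also $B_q$ is symmetric, since the Dirichlet term obviously is and $\ip{m_q(v)}{w}_{\R^\dimens}=\ip{q}{vw}_{\R^\dimens}=\ip{q}{wv}_{\R^\dimens}=\ip{m_q(w)}{v}_{\R^\dimens}$.

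Next I would treat well-definedness, linearity, and continuity. If $f_1-f_2\in\widetilde{H}^s(\Omega)$ then $u_{f_1}$ and $u_{f_2}$ solve the same exterior value problem, so by the uniqueness in lemma \ref{lemma:schrodingerexistenceofsolutions} they coincide; hence $u_f$ depends only on $[f]$, and linearity of $f\mapsto u_f$ follows from uniqueness and linearity of the equation. If $g_1-g_2\in\widetilde{H}^s(\Omega)$ then $B_q(u_f,g_1)-B_q(u_f,g_2)=B_q(u_f,g_1-g_2)=0$ since $u_f$ is a weak solution, so $\ip{\Lambda_q[f]}{[g]}:=B_q(u_f,g)$ descends to a map on $X\times X$, bilinear by bilinearity of $B_q$. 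For continuity, for any $\varphi\in\widetilde{H}^s(\Omega)$ one has $B_q(u_f,g)=B_q(u_f,g-\varphi)$, whence $\abs{\ip{\Lambda_q[f]}{[g]}}\leq C\aabs{u_f}_{H^s(\R^\dimens)}\aabs{g-\varphi}_{H^s(\R^\dimens)}$; taking the infimum over $\varphi$, and using that $u_f=u_{f'}$ for every representative $f'$ of $[f]$ together with $\aabs{u_{f'}}_{H^s(\R^\dimens)}\leq C\aabs{f'}_{H^s(\R^\dimens)}$, gives $\abs{\ip{\Lambda_q[f]}{[g]}}\leq C\aabs{[f]}_X\aabs{[g]}_X$, so $\Lambda_q\colon X\to X^*$ is continuous.

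Finally, self-adjointness follows from the usual symmetry argument: $B_q(u_f,g)=B_q(u_f,g-u_g)+B_q(u_f,u_g)=B_q(u_f,u_g)$ because $g-u_g\in\widetilde{H}^s(\Omega)$ and $u_f$ is a weak solution; by the symmetry of $B_q$ this equals $B_q(u_g,u_f)$, and the same step in reverse gives $B_q(u_g,u_f)=B_q(u_g,f)$. Hence $\ip{\Lambda_q[f]}{[g]}=B_q(u_f,g)=B_q(u_g,f)=\ip{\Lambda_q[g]}{[f]}=\ip{[f]}{\Lambda_q[g]}$.

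There is no serious obstacle here; the lemma is essentially bookkeeping on the quotient space once lemma \ref{lemma:schrodingerexistenceofsolutions} is in hand. The only two points requiring attention are invoking condition \eqref{def:zeronotdirichleteigenvalue} at the right moment — it is what forces $0\notin\Sigma$, hence the unique solvability that makes $u_f$ exist, be unique, and depend only on $[f]$ — and recalling that the boundedness of the multiplier $m_q$, and therefore of $B_q$, is precisely the property that the space $Z^{-s}(\R^\dimens)$ is built to furnish.
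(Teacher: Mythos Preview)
Your proof is correct and follows essentially the same approach as the paper's: well-definedness via uniqueness in lemma \ref{lemma:schrodingerexistenceofsolutions}, boundedness from the continuity of $(-\Delta)^{s/2}$ and the $Z^{-s}$ multiplier bound together with the a priori estimate $\aabs{u_f}_{H^s}\leq C\aabs{f}_{H^s}$, and self-adjointness by passing through $B_q(u_f,u_g)$ and using the symmetry of $B_q$. Your write-up is slightly more explicit than the paper's (you spell out the infimum over representatives to reach the quotient norm and record the boundedness and symmetry of $B_q$ separately), but there is no substantive difference.
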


\begin{proof}
 Since $u_f$ is a solution to $\fraclaplace u+m_q(u)=0$ in $\Omega$ with $u_f-f\in\widetilde{H}^s(\Omega)$ and solutions are unique, we obtain $B_q(u_{f+\varphi}, g+\psi)=B_q(u_f, g)$ for $\varphi, \psi\in\widetilde{H}^s(\Omega)$. This implies that~$\Lambda_q$ is well-defined. Further, using continuity of $(-\Delta)^{s/2}$ and the norm estimate for solution $u_f$ from lemma \ref{lemma:schrodingerexistenceofsolutions}, we obtain
\begin{align*}
\abs{\ip{\Lambda_q[f]}{[g]}}&\leq\aabs{(-\Delta)^{s/2}u_f}_{L^2(\R^\dimens)}\aabs{(-\Delta)^{s/2}g}_{L^2(\R^\dimens)}+\aabs{q}_{Z^{-s}(\R^\dimens)}\aabs{u_f}_{H^s(\R^\dimens)}\aabs{g}_{H^s(\R^\dimens)} \\
&\leq C\aabs{f}_{H^s(\R^\dimens)}\aabs{g}_{H^s(\R^\dimens)},
\end{align*}
where $C$ does not depend on $f$ and $g$.
By the definition of the quotient norm $\abs{\ip{\Lambda_q[f]}{[g]}}\leq C\aabs{[f]}_{X}\aabs{[g]}_{X}$, so~$\Lambda_q$ is continuous. Choosing $g=u_g$ we obtain by symmetry of $B_q(\cdot, \cdot)$
$$
\ip{\Lambda_q[f]}{[g]}=B_q(u_f, u_g)=\ip{\Lambda_q[g]}{[u_f]}=\ip{[f]}{\Lambda_q[g]}
$$
where we used the fact that $u_f-f\in\widetilde{H}^s(\Omega)$.
\end{proof}

We immediately obtain the Alessandrini identity from lemma \ref{lemma:dnmap} (see \cite[Lemma 2.7]{RS-fractional-calderon-low-regularity-stability}). We use some abuse of notation and write $f$ instead of $[f]$.

\begin{lemma}[Alessandrini identity]
\label{lemma:schrodingeralessandrini}
Let $\Omega\subset\R^\dimens$ be bounded open set, $s\in\R^+\setminus\Z$ and $q_1, q_2\in Z_0^{-s}(\R^\dimens)$ which satisfy \eqref{def:zeronotdirichleteigenvalue}. For any $f_1, f_2\in X$ one has
$$
\ip{(\Lambda_{q_1}-\Lambda_{q_2})f_1}{f_2}=\ip{m_{q_1-q_2}(u_1)}{u_2}_{\R^\dimens}
$$
where $u_i\in H^s(\R^\dimens)$ solves $\fraclaplace u_i+m_{q_i}(u_i)=0$ in $\Omega$ with $u_i-f_i\in\widetilde{H}^s(\Omega)$.
\end{lemma}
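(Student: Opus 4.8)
The plan is to unwind the definition of the DN map given in Lemma~\ref{lemma:dnmap} and then exploit the symmetry of the bilinear form $B_q$ together with the weak formulation of the equation. First I would use the identity $\ip{\Lambda_{q_1}[f_1]}{[f_2]}=B_{q_1}(u_1,f_2)$ and, via the self-adjointness established in Lemma~\ref{lemma:dnmap}, $\ip{\Lambda_{q_2}[f_1]}{[f_2]}=\ip{[f_1]}{\Lambda_{q_2}[f_2]}=\ip{\Lambda_{q_2}[f_2]}{[f_1]}=B_{q_2}(u_2,f_1)$, to write
\[
\ip{(\Lambda_{q_1}-\Lambda_{q_2})f_1}{f_2}=B_{q_1}(u_1,f_2)-B_{q_2}(u_2,f_1).
\]

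Next I would replace the exterior data by the solutions themselves. Since $u_1$ is a weak solution of $\fraclaplace u_1+m_{q_1}(u_1)=0$ in $\Omega$, Lemma~\ref{lemma:schrodingerexistenceofsolutions} (equivalently, the computation carried out in the proof of Lemma~\ref{lemma:dnmap}) gives $B_{q_1}(u_1,w)=0$ for all $w\in\widetilde{H}^s(\Omega)$; because $u_i-f_i\in\widetilde{H}^s(\Omega)$, the choice $w=f_2-u_2$ is admissible and yields $B_{q_1}(u_1,f_2)=B_{q_1}(u_1,u_2)$. Symmetrically, $B_{q_2}(u_2,f_1)=B_{q_2}(u_2,u_1)$, and by the symmetry of the bilinear form $B_{q_2}(u_2,u_1)=B_{q_2}(u_1,u_2)$. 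Hence the right-hand side above equals $B_{q_1}(u_1,u_2)-B_{q_2}(u_1,u_2)$.

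Finally I would subtract the two bilinear forms: the common term $\ip{(-\Delta)^{s/2}u_1}{(-\Delta)^{s/2}u_2}_{\R^\dimens}$ cancels, leaving $\ip{m_{q_1}(u_1)}{u_2}_{\R^\dimens}-\ip{m_{q_2}(u_1)}{u_2}_{\R^\dimens}$, which equals $\ip{m_{q_1-q_2}(u_1)}{u_2}_{\R^\dimens}$ because $q\mapsto m_q$ is linear (recall $\ip{m_q(u)}{v}_{\R^\dimens}=\ip{q}{uv}_{\R^\dimens}$). This is precisely the claimed identity. I do not expect a genuine obstacle here; the only point requiring care is the bookkeeping of admissible test functions — one must check that $f_2-u_2$ lies in $\widetilde{H}^s(\Omega)$ so that both the weak formulation of the equation for $u_1$ and the self-adjointness of $\Lambda_{q_2}$ may legitimately be invoked, and that the dual pairings $\ip{m_{q_i}(u_1)}{u_2}_{\R^\dimens}$ are well defined, which follows from $q_i\in Z_0^{-s}(\R^\dimens)$ and $u_1,u_2\in H^s(\R^\dimens)$.
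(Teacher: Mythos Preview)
Your proposal is correct and follows essentially the same route as the paper: express $\ip{(\Lambda_{q_1}-\Lambda_{q_2})f_1}{f_2}=B_{q_1}(u_1,f_2)-B_{q_2}(u_2,f_1)$ via self-adjointness, use that $B_{q_i}(u_i,\,\cdot\,)$ vanishes on $\widetilde{H}^s(\Omega)$ to replace $f_j$ by $u_j$, and then subtract the bilinear forms so that the principal part cancels and only $\ip{m_{q_1-q_2}(u_1)}{u_2}_{\R^\dimens}$ remains. The bookkeeping points you flag (that $f_2-u_2\in\widetilde{H}^s(\Omega)$ and that the pairings are well defined since $q_i\in Z_0^{-s}(\R^\dimens)$) are exactly the ones the paper implicitly relies on.
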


\begin{proof}
Using the self-adjointness of~$\Lambda_q$ and the property $B_q(u_i, g+\psi)=B_q(u_i, g)$ for $\psi\in\widetilde{H}^s(\Omega)$, we obtain
\begin{align*}
\ip{(\Lambda_{q_1}-\Lambda_{q_2})f_1}{f_2}&=\ip{\Lambda_{q_1}f_1}{f_2}-\ip{f_1}{\Lambda_{q_2}f_2}=B_{q_1}(u_1, f_2)-B_{q_2}(u_2, f_1) \\
&=B_{q_1}(u_1, f_2+(u_2-f_2))-B_{q_2}(u_2, f_1+(u_1-f_1)) \\
&= B_{q_1}(u_1, u_2)-B_{q_2}(u_1, u_2)=\ip{m_{q_1-q_2}(u_1)}{u_2}_{\R^\dimens}
\end{align*}
which gives the claim.
\end{proof}

From the UCP of $\fraclaplace$ (theorem \ref{thm:uniquecontinuationoffractionallaplacian}), we obtain the following approximation result (see \cite[Lemma 8.1]{RS-fractional-calderon-low-regularity-stability}).

\begin{lemma}
\label{lemma:schrodingerrunge}
Let $\Omega\subset\R^\dimens$ be bounded open set, $s\in\R^+\setminus\Z$ and $q\in Z_0^{-s}(\R^\dimens)$ which satisfies~\eqref{def:zeronotdirichleteigenvalue}. Denote by $P_q\colon X\rightarrow H^s(\R^\dimens)$, $P_q f=u_f$, where $u_f\in H^s(\R^\dimens)$ is the unique solution to $\fraclaplace u +m_q(u)=0$ in $\Omega$ with $u-f\in \widetilde{H}^s(\Omega)$ given by lemma~\ref{lemma:schrodingerexistenceofsolutions}. Let $W\subset\Omega_e$ be any open set and define the set 
$$
\mathcal{R}=\{P_q f-f: f\in C_c^{\infty}(W)\}.
$$
Then $\mathcal{R}$ is dense in $\widetilde{H}^s(\Omega)$.
\end{lemma}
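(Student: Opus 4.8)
The plan is to use the standard Hahn--Banach duality argument, which reduces density of $\mathcal{R}$ in $\widetilde{H}^s(\Omega)$ to an adjoint equation whose exterior-data UCP follows from Theorem \ref{thm:uniquecontinuationoffractionallaplacian}. First I would observe that $\mathcal{R}\subset \widetilde{H}^s(\Omega)$: indeed for $f\in C_c^\infty(W)$ we have $P_qf-f = u_f-f\in\widetilde{H}^s(\Omega)$ by the very definition of $P_q$ in Lemma \ref{lemma:schrodingerexistenceofsolutions}. Since $\widetilde{H}^s(\Omega)$ is a closed subspace of the Hilbert space $H^s(\R^\dimens)$, it suffices to show that any $v\in(\widetilde{H}^s(\Omega))^*$ which annihilates $\mathcal{R}$ must be $0$. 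Writing the pairing as $\langle v, P_qf-f\rangle = 0$ for all $f\in C_c^\infty(W)$, I would then introduce the solution $w\in H^s(\R^\dimens)$ of the adjoint problem: since $q$ (hence $m_q$) is self-adjoint and $0\notin\Sigma$, Lemma \ref{lemma:schrodingerexistenceofsolutions} gives a unique $w$ with $\fraclaplace w + m_q(w) = v$ in $\Omega$ in the distributional sense and $w\in\widetilde{H}^s(\Omega)$ (i.e.\ zero exterior value).

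The key computation is to rewrite $\langle v, P_qf - f\rangle$ using the bilinear form. With $u_f=P_qf$ solving $\fraclaplace u_f + m_q(u_f)=0$ in $\Omega$ and $u_f - f\in\widetilde{H}^s(\Omega)$, and with $w$ as above, I would compute
\begin{align*}
\langle v, u_f - f\rangle &= B_q(w, u_f - f) = B_q(w, u_f) - B_q(w, f)\\
&= -B_q(w,f),
\end{align*}
where the first equality uses that $u_f-f\in\widetilde{H}^s(\Omega)$ together with $B_q(w,\cdot)=\langle v,\cdot\rangle$ as functionals on $\widetilde{H}^s(\Omega)$, and the last equality uses $B_q(w,u_f)=B_q(u_f,w)=0$ because $w\in\widetilde{H}^s(\Omega)$ is an admissible test function against the equation solved by $u_f$ (here symmetry of $B_q$ and self-adjointness of $m_q$ enter). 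Since $f\in C_c^\infty(W)$ is supported in $\Omega_e$, the term $\langle m_q(w),f\rangle$ vanishes (as $q$ is supported in $\overline{\Omega}$, having $q\in Z_0^{-s}(\R^\dimens)$ but effectively acting only where $\Omega$ and the supports meet; more carefully, $m_q(w)$ restricted to $W\subset\Omega_e$ is zero because the defining pairing $\langle q, w\varphi\rangle$ vanishes for $\varphi\in C_c^\infty(W)$), so we are left with $B_q(w,f)=\langle\fraclaplace w,f\rangle_{\R^\dimens}=\langle(-\Delta)^{s/2}w,(-\Delta)^{s/2}f\rangle_{\R^\dimens}$. Hence $\langle \fraclaplace w, f\rangle_{\R^\dimens}=0$ for all $f\in C_c^\infty(W)$, i.e.\ $\fraclaplace w|_W = 0$.

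Now the UCP finishes the argument: $w\in\widetilde{H}^s(\Omega)\subset H^s(\R^\dimens)$ has $w|_W=0$ (since $w$ has zero exterior value and $W\subset\Omega_e$) and $\fraclaplace w|_W=0$ from the previous step. Since $s\in\R^+\setminus\Z$, Theorem \ref{thm:uniquecontinuationoffractionallaplacian} applies to the nonempty open set $W$ and yields $w=0$. But then $v=\fraclaplace w + m_q(w) = 0$, which is exactly what we needed. I expect the main obstacle to be the bookkeeping in the bilinear-form manipulation --- specifically, justifying that the cross terms $B_q(w,u_f)$ vanish and that $\langle m_q(w),f\rangle=0$ for $f$ supported in the exterior, which requires care with the distributional meaning of $m_q$ and with which functions are legitimate test functions in the weak formulation of Lemma \ref{lemma:schrodingerexistenceofsolutions}. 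The rest is a routine Hahn--Banach closure argument together with a direct invocation of the UCP.
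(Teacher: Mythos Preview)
Your proposal is correct and follows essentially the same Hahn--Banach/adjoint-equation/UCP route as the paper's own proof (with your $w,v$ playing the role of the paper's $\varphi,F$). One small clarification: the vanishing of $\langle m_q(w),f\rangle$ is not because $q$ is supported in $\overline{\Omega}$ (nothing in $q\in Z_0^{-s}(\R^\dimens)$ forces that), but simply because $\langle m_q(w),f\rangle=\langle q,wf\rangle$ and $wf\equiv 0$ since $\spt(w)\subset\overline{\Omega}$ while $\spt(f)\subset W\subset\Omega_e$; your ``more carefully'' parenthetical already says this correctly.
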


\begin{proof}
By the Hahn-Banach theorem it is enough to show that if $F\in (\widetilde{H}^s(\Omega))^*$ and $\ip{F}{v}=0$ for all $v\in\mathcal{R}$, then $F=0$. Let $F\in (\widetilde{H}^s(\Omega))^*$ and assume that 
$$
\ip{F}{P_qf-f}=0, \quad f\in C_c^{\infty}(W).
$$
Let $\varphi\in\widetilde{H}^s(\Omega)$ be the solution to 
$$
\fraclaplace\varphi+m_q(\varphi)=F \ \text{in} \ \Omega, \quad \varphi|_{\Omega_e}=0
$$ 
which exists by lemma \ref{lemma:schrodingerexistenceofsolutions}. This means that $B_q(\varphi, w)=\ip{F}{w}$ for all $w\in\widetilde{H}^s(\Omega)$. Let $u_f=P_q f\in H^s(\R^\dimens)$ where $u_f-f\in\widetilde{H}^s(\Omega)$. Now
$$
\ip{F}{P_q f-f}=B_q(\varphi, u_f-f)=-B_q(\varphi, f)
$$
since $u_f$ is a solution to $\fraclaplace u+m_q(u)=0$ in $\Omega$ and $\varphi\in\widetilde{H}^s(\Omega)$. Thus $B_q(\varphi, f)=0$ for all $f\in C_c^{\infty}(W)$. Using the fact that $\spt(\varphi)$ and $\spt(f)$ are disjoint, we obtain
$$
0=\ip{(-\Delta)^{s/2}\varphi}{(-\Delta)^{s/2}f}_{\R^\dimens}=\ip{\fraclaplace\varphi}{f}_{\R^\dimens}.
$$
Here we used that $\ip{(-\Delta)^{s/2}u}{ (-\Delta)^{s/2}v}_{\R^\dimens}=\ip{\fraclaplace u}{ v}_{\R^\dimens}$ for $u, v\in\schwartz(\R^\dimens)$ and the equality holds also in $H^s(\R^\dimens)$ by density. Hence $\varphi|_W=\fraclaplace\varphi|_W=0$ and theorem \ref{thm:uniquecontinuationoffractionallaplacian} implies $\varphi=0$ and eventually $F=0$.
\end{proof}

We remark that exactly the same proof gives the density of $r_\Omega\mathcal{R}$ in $L^2(\Omega)$ where $r_\Omega$ is the restriction to $\Omega$ (see \cite[Lemma 5.1]{GSU-calderon-problem-fractional-schrodinger}). Now it is easy to prove theorems \ref{thm:schrodingeruniqueness} and \ref{thm:schrodingerrungeapproximation}.

\begin{proof}[Proof of theorem \ref{thm:schrodingeruniqueness}.]
Since we can always shrink the sets $W_i$, we can assume without loss of generality that $\overline{W}_1\cap\overline{W}_2=\varnothing$ and $(\overline{W}_1\cup\overline{W}_2)\cap\overline{\Omega}=\varnothing$. Using the Alessandrini identity (lemma \ref{lemma:schrodingeralessandrini}), we obtain 
$$
\ip{m_{q_1-q_2}(u_1)}{u_2}_{\R^\dimens}=0
$$
for any $u_i\in H^s(\R^\dimens)$ which solves $\fraclaplace u_i+m_{q_i}(u_i)=0$ in $\Omega$ with exterior values in $C_c^{\infty}(W_i)$. Let $v_1, v_2\in\widetilde{H}^s(\Omega)$. By lemma \ref{lemma:schrodingerrunge} there are sequences of exterior values $f_1^k\in C_c^{\infty}(W_1)$ and $f_2^k\in C_c^{\infty}(W_2)$ with sequences of solutions $u_1^k, u_2^k\in H^s(\R^\dimens)$ such that
\begin{itemize}
\item $\fraclaplace u_i^k+m_{q_i}(u_i^k)=0$ in $\Omega$
\item $u_i^k-f_i^k\in\widetilde{H}^s(\Omega)$
\item $u_i^k=f_i^k+v_i+r_i^k$ where $r_i^k\xrightarrow{k\rightarrow\infty}0$ in $\widetilde{H}^s(\Omega)$.
\end{itemize}
When we insert the solutions $u_i^k$ into the Alessandrini identity, use the support conditions and take the limit $k\rightarrow\infty$, we obtain
$$
\ip{m_{q_1-q_2}(v_1)}{v_2}_{\R^\dimens}=0.
$$
Let $\varphi\in C_c^{\infty}(\Omega)$. Choose $v_1=\varphi$ and $v_2\in C_c^{\infty}(\Omega)$ such that $v_2=1$ in a neighborhood of $\spt(\varphi)$. This implies
$$
0=\ip{m_{q_1-q_2}(v_1)}{v_2}_{\R^\dimens}=\ip{q_1-q_2}{v_1 v_2}_{\R^\dimens}=\ip{q_1-q_2}{\varphi}_{\R^\dimens}
$$
which is equivalent to that $q_1|_{\Omega}=q_2|_{\Omega}$ as distributions.
\end{proof}

\begin{proof}[Proof of theorem \ref{thm:schrodingerrungeapproximation}.]
Since $\text{int}(\Omega_1\setminus\Omega)\neq\varnothing$, there is open set $W\subset\Omega_e$ such that $\overline{W}\subset\Omega_1\setminus\overline{\Omega}$. By lemma \ref{lemma:schrodingerrunge}, the set $\mathcal{R}$ is dense in $\widetilde{H}^s(\Omega)$. Hence, we can approximate any $g\in \widetilde{H}^s(\Omega)$ arbitrarily well by solutions $u\in H^s(\R^\dimens)$ to the equation $\fraclaplace u+m_q(u)=0$ in $\Omega$ with $u-f\in\widetilde{H}^s(\Omega)$. Since $f\in C_c^{\infty}(W)$ we especially have $\spt(u)\subset\overline{\Omega}_1$.
\end{proof}

\section{Higher order fractional magnetic Schr\"odinger equation}
\label{sec:magneticschrodinger}
In this section we will be dealing with the definition of the FMSE, as well as with the proof of the injectivity result for the corresponding inverse problem. For the sake of simplicity, let us fix the convention throughout this section that the symbol $\langle\cdot,\cdot\rangle$ indicates both the scalar product (duality pairing) on $L^2(\mathbb R^n)$ and the one on $L^2(\mathbb R^{2n})$, the distinction between the two being always possible by checking the number of free variables inside the brackets. We also let the norms $\aabs{\cdot}_{L^2}$, $\aabs{\cdot}_{H^s}$ etc. to denote the norms over the whole~$\R^\dimens$ or $\R^{2\dimens}$ when the base set is not specified.

\subsection{High order bivariate functions} 

Let $l,n\in \mathbb N$, and consider a family $A$ of scalar two-point functions indexed over the set $\{ 1, ..., n \}^l$. A generic member of the family is determined by a vector $(i_1, ..., i_l)$ such that $i_j \in \{ 1, ..., n \}$ for all $j\in \{ 1, ..., l \}$, and it is a function $A_{i_1, ..., i_l}: \mathbb R^{2n}\rightarrow \mathbb R$. We call such family $A$ a \emph{bivariate function of order} $l$. One can see the family $A$ as a function $A: \R^{2n} \rightarrow \mathbb M^l$, where $\mathbb M^l$ is the set of all $n\times...\times n = n^l$ arrays of real numbers, i.e. tensors of order $l$. 
\vspace{3mm}

Let $a, b \in \mathbb N$, and let $A, B$ be bivariate functions of orders $a$ and $b$ respectively, in the variables $x_1, x_2$. The \emph{tensor product} of $A$ and $B$ is the bivariate function of order $a+b$ given by $$ (A \otimes B)_{i_1, ..., i_a, j_1, ..., j_b} (x_1, x_2) := A_{i_1, ..., i_a}(x_1, x_2) B_{j_1, ..., j_b}(x_1, x_2)\;. $$ In particular, for a vector $\xi\in\R^\dimens$ we let $\xi^{\otimes 0}=0$, $\xi^{\otimes 1}=\xi$ and recursively $\xi^{\otimes m}=\xi^{\otimes (m-1)}\otimes\xi $.

\noindent Let $A,B$ as before, but assume now that $a\geq b$. The \emph{contraction} of $A$ and $B$ is the bivariate function of order $a-b$ given by $$ (A \cdot B)_{i_1, ..., i_{a-b}} (x_1, x_2) := \sum_{j_1, ..., j_b = 1}^n A_{i_1, ..., i_{a-b}, j_1, ..., j_b}(x_1, x_2) B_{j_1, ..., j_b}(x_1, x_2)\;. $$

\noindent If $A=B$, then of course $a=b$, so that $A\cdot A$ is a scalar function of the variables $(x_1, x_2)$. One sees that $|A|:=(A\cdot A)^{1/2}$ defines a norm for fixed $x_1$ and $x_2$, and that this coincides with the usual one when $A$ is a vector function.
\vspace{3mm}

\begin{lemma}\label{prodlem}
Let $a, b \in \mathbb N$, and let $A, B, v$ be bivariate functions of orders $a,b$ and $1$ respectively, in the variables $x_1, x_2$. Assume that $a\geq b+1$; then \begin{equation*}A\cdot (B\otimes v) = (A\cdot v)\cdot B\;.\end{equation*}
\end{lemma}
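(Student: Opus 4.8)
The plan is to prove the identity componentwise. Both sides are bivariate functions of order $a-b-1$, and the hypothesis $a\geq b+1$ guarantees that every contraction appearing is legitimate: $A$ has order $a\geq b+1$, so $A\cdot(B\otimes v)$ makes sense; and $A\cdot v$ has order $a-1\geq b$, so $(A\cdot v)\cdot B$ makes sense. Hence it suffices to fix an arbitrary point $(x_1,x_2)\in\R^{2n}$ — which I will suppress from the notation — and an arbitrary free multi-index $(i_1,\dots,i_{a-b-1})\in\{1,\dots,n\}^{a-b-1}$, and check that the two resulting real numbers coincide.

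For the left-hand side I would first expand the tensor product as $(B\otimes v)_{j_1,\dots,j_b,k}=B_{j_1,\dots,j_b}\,v_k$ and then apply the definition of the contraction of $A$ (order $a$) with $B\otimes v$ (order $b+1$), which sums over the last $b+1$ indices of $A$, obtaining
$$\bigl(A\cdot(B\otimes v)\bigr)_{i_1,\dots,i_{a-b-1}}=\sum_{j_1,\dots,j_b,\,k=1}^{n}A_{i_1,\dots,i_{a-b-1},j_1,\dots,j_b,k}\,B_{j_1,\dots,j_b}\,v_k.$$
For the right-hand side I would first write $(A\cdot v)_{m_1,\dots,m_{a-1}}=\sum_{k=1}^{n}A_{m_1,\dots,m_{a-1},k}\,v_k$, then contract $A\cdot v$ (order $a-1$) with $B$ (order $b$) over its last $b$ indices, and substitute the expression for $A\cdot v$ to get
$$\bigl((A\cdot v)\cdot B\bigr)_{i_1,\dots,i_{a-b-1}}=\sum_{j_1,\dots,j_b=1}^{n}\Bigl(\sum_{k=1}^{n}A_{i_1,\dots,i_{a-b-1},j_1,\dots,j_b,k}\,v_k\Bigr)B_{j_1,\dots,j_b}.$$

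The two displayed quantities are the same finite sum of real numbers: one may pull $v_k$ out of the inner sum in the second expression and reorder the summation over the indices $j_1,\dots,j_b,k$ freely. Since this holds for every free multi-index and every $(x_1,x_2)$, the two bivariate functions are equal, which is the claim. I expect no real obstacle here beyond careful index bookkeeping — namely making sure that the slots being contracted are matched consistently with the ordering convention fixed in the definitions of $\otimes$ and $\cdot$; there is no analytic content.
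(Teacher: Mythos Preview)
Your proof is correct and is essentially identical to the paper's own proof: both fix a free multi-index, unwind the definitions of $\otimes$ and $\cdot$, and observe that the two resulting finite sums coincide after reordering. The only cosmetic difference is that the paper names the last summation index $j_{b+1}$ instead of $k$.
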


\begin{proof}
The proof is just a simple computation:
\begin{align*}
[A\cdot (B\otimes v)]_{i_1, ..., i_{a-b-1}}& = \sum_{j_1, ..., j_{b+1} = 1}^n A_{i_1, ..., i_{a-b-1}, j_1, ..., j_{b+1}} (B\otimes v)_{j_1, ..., j_{b+1}} \notag\\ & = \sum_{j_1, ..., j_{b+1} = 1}^n A_{i_1, ..., i_{a-b-1}, j_1, ..., j_{b+1}} B_{j_1, ..., j_b} v_{j_{b+1}} \notag\\ & = \sum_{j_1, ..., j_{b} = 1}^n B_{j_1, ..., j_b} \sum_{j_{b+1}=1}^n A_{i_1, ..., i_{a-b-1}, j_1, ..., j_{b+1}} v_{j_{b+1}} \notag\\ & = \sum_{j_1, ..., j_{b} = 1}^n B_{j_1, ..., j_b} (A\cdot v)_{i_1, ..., i_{a-b-1}, j_1, ..., j_{b}} \notag\\ & = [(A\cdot v)\cdot B]_{i_1, ..., i_{a-b-1}}\;. \qedhere
\end{align*}
\end{proof}

Let $A$ be a bivariate function of any order. Following \cite{CO-magnetic-fractional-schrodinger}, we recall the definitions of the \emph{symmetric} and \emph{antisymmetric parts of $A$ with respect to the variables $x$ and $y$} and the $L^2$ \emph{norms of} $A$ \emph{with respect to the first and second variable at point} $x$:
$$ A_s(x,y):= \frac{A(x,y)+A(y,x)}{2}\,, \;\;\;\;\;\; A_a(x,y) := A(x,y)-A_s(x,y)\;, $$ $$ \mathcal{J}_1 A(x) := \left( \int_{\mathbb R^n} |A(y,x)|^2 \,dy \right)^{1/2}\;\;, \;\;\;\; \mathcal{J}_2 A(x) := \left( \int_{\mathbb R^n} |A(x,y)|^2 \,dy \right)^{1/2}\;.$$  

\noindent It is easily seen that $A\in L^2$ implies $A_s, A_a \in L^2$, since \begin{equation}\label{symest} \|A_s\|_{L^2} = \left\| \frac{A(x,y)+A(y,x)}{2} \right\|_{L^2} \leq \|A\|_{L^2}\;,\;\;\; \|A_a\|_{L^2} = \left\| \frac{A(x,y)-A(y,x)}{2} \right\|_{L^2} \leq \|A\|_{L^2}\;. \end{equation}

A bivariate function $A$ of any order will be called \emph{symmetric} if $A=A_s$ almost everywhere, and \emph{antisymmetric} if $A=A_a$ almost everywhere. 

\begin{lemma}\label{antisymlem}
Let $A\in L^1(\mathbb R^{2n}, \mathbb M^l)$ be an antisymmetric bivariate function of order $l$ for some $l\in \mathbb N$. Then $\int_{\mathbb R^{2n}} A(x,y) \, dydx =0\;.$
\end{lemma}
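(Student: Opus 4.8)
The plan is to exploit the antisymmetry of $A$ together with the invariance of the Lebesgue measure on $\mathbb R^{2n}$ under the coordinate swap $(x,y)\mapsto(y,x)$. Since $A\in L^1(\mathbb R^{2n},\mathbb M^l)$, the integral $\int_{\mathbb R^{2n}}A(x,y)\,dy\,dx$ is a well-defined element of $\mathbb M^l$, computed componentwise; so it suffices to argue for each fixed index tuple $(i_1,\dots,i_l)$, i.e. for a scalar $L^1$ function.

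First I would write $I=\int_{\mathbb R^{2n}}A(x,y)\,dy\,dx$ and apply the change of variables $\Phi(x,y)=(y,x)$, which is a linear bijection of $\mathbb R^{2n}$ with $|\det D\Phi|=1$. This gives $I=\int_{\mathbb R^{2n}}A(y,x)\,dy\,dx$, where I have simply relabelled the dummy variables after the substitution. Next I would use antisymmetry: $A=A_a$ almost everywhere means $A(x,y)=-A(y,x)$ a.e., so the right-hand side equals $-\int_{\mathbb R^{2n}}A(x,y)\,dy\,dx=-I$. Hence $I=-I$, and since $\mathbb M^l$ is a real vector space this forces $I=0$. Throughout, Fubini's theorem (applicable because $A\in L^1(\mathbb R^{2n})$) justifies treating the iterated integral as an integral over $\mathbb R^{2n}$ and interchanging the order, so the manipulations are legitimate.

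There is essentially no serious obstacle here; the only points requiring a word of care are (i) that the integrability hypothesis $A\in L^1$ is exactly what makes $I$ finite and Fubini applicable — without it the cancellation argument could produce an indeterminate $\infty-\infty$ — and (ii) that antisymmetry should be read componentwise, which is immediate from the definition of $A_a$ and the componentwise definition of $|A|$ and the $L^1$ norm. I would phrase the proof in one short displayed chain,
\begin{equation*}
\int_{\mathbb R^{2n}}A(x,y)\,dy\,dx=\int_{\mathbb R^{2n}}A(y,x)\,dy\,dx=-\int_{\mathbb R^{2n}}A(x,y)\,dy\,dx,
\end{equation*}
and conclude that the common value is $0$.
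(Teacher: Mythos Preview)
Your proof is correct and uses essentially the same idea as the paper: both arguments exploit the invariance of Lebesgue measure under the swap $(x,y)\mapsto(y,x)$ together with the antisymmetry $A(x,y)=-A(y,x)$ to force the integral to equal its own negative. The paper phrases this via a decomposition into the regions above and below the diagonal before recombining, while your global change-of-variables argument is slightly more streamlined, but the substance is the same.
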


\begin{proof}
Let $D^+, D^-$ be the sets respectively above and under the diagonal $D:=\{(x,y)\in \mathbb R^{2n} : x=y\}$ of $\mathbb R^{2n}$. Since $\int_{D^\pm} A(x,y) \, dydx \leq \int_{D^\pm} |A(x,y)| \, dydx \leq \|A\|_{L^1} <\infty$, we can decompose the integral as $\int_{\mathbb R^{2n}} A(x,y) \, dydx = \int_{D^+} A(x,y) \, dydx + \int_{D^-} A(x,y) \, dydx$. Given the symmetry of the sets $D^+$ and $D^-$, this can be rewritten as $ \int_{\mathbb R^{2n}} A(x,y) \, dydx = \int_{D^+} (A(x,y) + A(y,x)) \, dydx \;, $ which vanishes by virtue of the antisymmetry of $A$. \qedhere
\end{proof}

\subsection{Fractional operators}
Let $s\in \mathbb R^+\setminus \mathbb Z$, $u\in C^{\infty}_c(\mathbb R^n)$ and $x,y \in \mathbb R^n$. Let $\floor{s} := \sup\{n\in \mathbb N : n< s\}$ and $s' := s-\floor{s}$, so that by definition $s'\in (0,1)$. The \emph{fractional gradient of order $s$ of $u$ at points $x$ and $y$} is the following symmetric bivariate function of order $\floor{s}+1$: 
$$\nabla^s u(x,y) := \frac{\mathcal C_{n,s'}^{1/2}}{\sqrt 2} \frac{ \nabla^{\floor{s}}u(x) - \nabla^{\floor{s}}u(y) }{|y-x|^{n/2 +s'+1}} \otimes (y-x)\;.$$

\noindent Observe that this definition coincides with the usual one for $s\in(0,1)$, since in this case $\floor{s}=0$ and $s'=s$. One can compute
\begin{align}
\|\nabla^s u\|^2_{L^2(\mathbb R^{2n}, \mathbb M^{\floor{s}+1})} & = \frac{\mathcal{C}_{n,s'}}{2} \int_{\mathbb R^n}\int_{\mathbb R^n} \frac{|\nabla^{\floor{s}}u(x) - \nabla^{\floor{s}}u(y)|^2}{|x-y|^{n+2s'}} dx\,dy \notag\\ & = \frac{\mathcal{C}_{n,s'}}{2} [\nabla^{\floor{s}}u]^2_{\dot {H}^{s'}(\mathbb R^n)}=\aabs{(-\Delta)^{s'/2}\nabla^{\floor{s}}u}_{L^2(\R^\dimens)}^2 \notag\\ &= \| |\xi|^{s'} \xi^{\otimes \floor{s}} \hat u(\xi) \|^2_{L^2(\mathbb R^n)} = \| |\xi|^{s}\hat u(\xi) \|^2_{L^2(\mathbb R^n)} \notag\\ & = \left\| (-\Delta)^{s/2} u \right\|^2_{L^2(\mathbb R^n)} \leq \|u\|^2_{H^s(\mathbb R^n)}\label{L2Hsest}\;.
\end{align}

\noindent Thus, by the density of $C^{\infty}_c$ in $H^s$, $\nabla^s$ can be extended to a continuous operator $\nabla^s : H^s(\mathbb R^n) \rightarrow L^2(\mathbb R^{2n}, \mathbb M^{\floor{s}+1})$. One sees by density that the formula given for $\nabla^s u$ in the case $u\in C^\infty_c(\R^\dimens)$ still holds almost everywhere for $u\in H^s(\R^\dimens)$.
Thus if $u,v\in H^s$, by the above computation, $$ \langle\nabla^s u,\nabla^s u\rangle = \| (-\Delta)^{s/2} u \|^2_{L^2} = \langle (-\Delta)^{s/2} u, (-\Delta)^{s/2} u \rangle = \langle (-\Delta)^s u, u \rangle\;,$$

\noindent so that by the polarization identity and the self-adjointness of $(-\Delta)^s$,
\begin{align*}
\langle\nabla^s u,\nabla^s v\rangle & = \frac{\langle\nabla^s (u+v),\nabla^s (u+v)\rangle - \langle\nabla^s u,\nabla^s u\rangle - \langle\nabla^s v,\nabla^s v\rangle}{2} \\ & = \frac{\langle(-\Delta)^s (u+v),u+v\rangle - \langle(-\Delta)^s u, u\rangle - \langle(-\Delta)^s v, v\rangle}{2} \\ & = \frac{\langle(-\Delta)^s u, v\rangle + \langle(-\Delta)^s v, u\rangle}{2} = \langle(-\Delta)^s u, v\rangle\;.
\end{align*}
\noindent This proves that if the \emph{fractional divergence} $(\nabla\cdot)^s : L^2(\mathbb R^{2n}, \mathbb M^{\floor{s}+1}) \rightarrow H^{-s}(\mathbb R^n)$ is defined as the adjoint of $\nabla^s$, then weakly $(\nabla\cdot)^s\nabla^s = (-\Delta)^s$ for $s\in \mathbb R^+\setminus \mathbb Z$. This result was already proved in \cite{Co18}, but only for the case $s\in (0,1)$. If we define the antisymmetric bivariate vector function $$\alpha(x,y) := \frac{\mathcal C_{n,s'}^{1/2}}{\sqrt 2} \frac{y-x}{|y-x|^{n/2 +s'+1}}$$ then for $u\in H^s$ the identity \begin{align}\label{gradalpha} \nabla^s u(x,y) = ( \nabla^{\floor{s}}u(x) - \nabla^{\floor{s}}u(y) )\otimes \alpha(x,y) \end{align} \noindent holds almost everywhere.
\vspace{3mm}

We now define the magnetic versions of the above operators. Fix $p>\max\{1, n/2s\}$, and let $A$ be a bivariate function of order $\floor{s}+1$ such that 

\begin{enumerate}[label=(a\arabic*)]
\item \label{assumption1} $\mathcal{J}_2A \in L^{2p}(\mathbb R^n)$ 
\item \label{assumption2} $\spt(A) \subset \Omega\times\Omega$.
\end{enumerate}

\noindent With such choice of $p$, the embedding $H^s \times L^{2p}\hookrightarrow L^2$ always holds by \cite[Theorem 6.1]{BH2017}, recall that $W^r(\R^\dimens)=H^r(\R^\dimens)$ with equivalent norms when $r\in\R$ and $W^r(\R^\dimens)$ is the $L^2$ Sobolev-Slobodecki space \cite{BH2017, ML-strongly-elliptic-systems}. Therefore, if $u\in H^s$,
\begin{align*}
\| A(x,y)u(x) \|_{L^2(\mathbb R^{2n}, \mathbb M^{\floor{s}+1})} & = \left(\int_{\mathbb R^n} \abs{u(x)}^2 \int_{\mathbb R^n} |A(x,y)|^2 dy\, dx\right)^{1/2} \\ & = \left(\int_{\mathbb R^n} \abs{u(x)}^2 \,\abs{\mathcal{J}_2A(x)}^2 \,dx\right)^{1/2} = \|u \,\mathcal{J}_2A\|_{L^2(\mathbb R^n)} \\ & \leq c \|u\|_{H^s} \|\mathcal{J}_2A\|_{L^{2p}}<\infty,
\end{align*}
where $c$ does not depend on $u$ and $A$.
\noindent This allows the definition of $\nabla^s_A u(x,y) := \nabla^s u(x,y) + A(x,y)u(x)$ and its adjoint $(\nabla\cdot)^s_A$ just as in \cite{CO-magnetic-fractional-schrodinger}, in such a way that $\nabla^s_A : H^s(\mathbb R^n) \rightarrow L^2(\mathbb R^{2n}, \mathbb M^{\floor{s}+1})$ and $(\nabla\cdot)^s_A : L^2(\mathbb R^{2n}, \mathbb M^{\floor{s}+1}) \rightarrow H^{-s}(\mathbb R^n)$. By definition, the \emph{magnetic fractional Laplacian} $(-\Delta)^s_A : H^s \rightarrow H^{-s}$ will be the composition $(\nabla\cdot)^s_A\nabla^s_A$. Let now $q$ be a scalar field such that

\begin{enumerate}[label=(a\arabic*)]
\setcounter{enumi}{2}
\item \label{assumption3}$q \in L^{p}(\Omega)$.
\end{enumerate}

\noindent By \cite[Theorem 8.3]{BH2017} we have the embedding $H^s \times L^p \hookrightarrow H^{-s}$ and hence $qu\in H^{-s}$ holds for $u\in H^s$. We can thus define the magnetic Schr\"odinger operator $(-\Delta)^s_A + q : H^s \rightarrow H^{-s}$ and the fractional magnetic Schr\"odinger equation (FMSE) $$ (-\Delta)^s_Au + qu =0\;.$$

In the next Lemma we write $(-\Delta)^s_A$ in a more convenient form. To this scope, we introduce the bivariate function of order $\floor{s}$ given by $S(x,y):= A(x,y)\cdot\alpha(x,y)$, for which we assume that

\begin{enumerate}[label=(a\arabic*)]
\setcounter{enumi}{3}
\item \label{assumption4} $|S(x,y)| \leq \tilde S(y)$ for a.e. $x,y\in\mathbb R^n$, with $\tilde S \in L^2$,
\item \label{assumption5}  $S(x,y) \in H^{\floor{s}}(\mathbb R^{2n},\mathbb M^{\floor{s}} )$.
\end{enumerate}

\begin{remark}
Assumption \ref{assumption4} is really relevant only when $\floor{s}\neq 0$, as it will be clear from the proof; in the case $s\in (0,1)$, this assumption can be reduced. We refer to \cite{CO-magnetic-fractional-schrodinger} for a set of sufficient conditions in that regime. Moreover, with a more careful analysis, one could reduce the exponent of the space to which $\tilde S$ belongs. However, we decided to keep $L^2$ for the sake of simplicity.   
\end{remark}

\begin{lemma}\label{expanded} Let $A$ be a bivariate function of order $\floor{s}+1$ satisfying conditions \ref{assumption1}, \ref{assumption2} \ref{assumption4}, \ref{assumption5}, and let $u\in H^s$. There exist linear operators $\mathfrak N, \mathfrak M_\beta$ acting on bivariate functions of order $\floor{s}$, with $\beta$ a multi-index of length $|\beta|\leq \floor{s}$, such that the equation \begin{align*}
(-\Delta)^s_A u(x) = & (-\Delta)^s u(x) + \sum_{|\beta|\leq\floor{s}}\partial^\beta u(x) (\mathfrak{M}_\beta(S))(x) + \\ \;\;\;& + \int_{\mathbb R^n} u(y)  (\mathfrak{N}(S))(x,y)\,dy +  u(x)\int_{\mathbb R^n} |A(x,y)|^2 dy 
\end{align*} 
\noindent holds in weak sense.
\end{lemma}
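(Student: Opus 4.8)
The strategy is to unfold the definition $(-\Delta)^s_A u = (\nabla\cdot)^s_A\nabla^s_A u$ and collect terms according to how many times the magnetic potential $A$ appears. Recall $\nabla^s_A u = \nabla^s u + Au$ and that $(\nabla\cdot)^s_A$ is the adjoint of $\nabla^s_A$; by duality, for $v\in H^s$ we have
\begin{equation*}
\langle (-\Delta)^s_A u, v\rangle = \langle \nabla^s_A u, \nabla^s_A v\rangle = \langle \nabla^s u, \nabla^s v\rangle + \langle \nabla^s u, Av\rangle + \langle Au, \nabla^s v\rangle + \langle Au, Av\rangle.
\end{equation*}
The first term is $\langle(-\Delta)^s u, v\rangle$ by the computation preceding the lemma. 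The last term is $\langle u\, \mathcal J_2 A\cdot\overline{(\,\cdot\,)}\ldots\rangle$; unwinding the $L^2(\mathbb R^{2n})$ inner product and using $|A|^2 = A\cdot A$ gives exactly $\langle u(x)\int_{\mathbb R^n}|A(x,y)|^2\,dy, v\rangle$, which identifies the fourth summand in the claimed formula. So the content is in rewriting the two cross terms as a local piece $\sum_{|\beta|\le\floor{s}}\partial^\beta u\,(\mathfrak M_\beta(S))$ plus a genuinely nonlocal integral operator $\int u(y)(\mathfrak N(S))(x,y)\,dy$.

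First I would treat $\langle Au,\nabla^s v\rangle$. Using the representation \eqref{gradalpha}, $\nabla^s v(x,y) = (\nabla^{\floor{s}}v(x) - \nabla^{\floor{s}}v(y))\otimes\alpha(x,y)$, and Lemma \ref{prodlem} (contraction of $A$ of order $\floor{s}+1$ against a tensor product of a $\floor{s}$-tensor with the vector $\alpha$), we get
\begin{equation*}
A(x,y)\cdot\nabla^s v(x,y) = S(x,y)\cdot(\nabla^{\floor{s}}v(x) - \nabla^{\floor{s}}v(y)),
\end{equation*}
where $S = A\cdot\alpha$ has order $\floor{s}$. Hence $\langle Au,\nabla^s v\rangle = \int\!\!\int u(x) S(x,y)\cdot(\nabla^{\floor{s}}v(x)-\nabla^{\floor{s}}v(y))\,dy\,dx$. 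Splitting into the two pieces: the $\nabla^{\floor{s}}v(x)$ piece is local in $v$ and, after integrating $y$ out (legitimate by \ref{assumption4}, which makes $\int|S(x,y)|\,dy$ finite, paired against $u\in H^s\subset L^2$) and then integrating by parts $\floor{s}$ times to move all derivatives off $v$ onto the coefficient $\int u(x)S(x,y)\,dy$ and onto $u$ itself, produces terms of the form $\partial^\beta u\cdot(\text{derivatives of }S)$, i.e. contributions to the $\mathfrak M_\beta$ operators; the $\nabla^{\floor{s}}v(y)$ piece, after changing the order of integration and integrating by parts $\floor{s}$ times in $y$, becomes $\int_{\mathbb R^n} v(y)\,\big(\text{something built from }S\text{ and }u\big)(y)\,dy$, but this is not yet of the right form because it still involves $u$; rather, one recognizes it (after relabeling) as $\langle \int u(x)(\text{kernel})(x,\cdot)\,dx, v\rangle$, contributing to $\mathfrak N$. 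The term $\langle\nabla^s u, Av\rangle$ is handled symmetrically (swap the roles of $u$ and $v$), and using the symmetry/antisymmetry bookkeeping of $\alpha$ and $S$ — together with Lemma \ref{antisymlem} to discard integrals of antisymmetric pieces — one merges the local contributions from both cross terms into a single collection $\{\mathfrak M_\beta\}$ and the nonlocal contributions into a single $\mathfrak N$. The regularity assumption \ref{assumption5}, $S\in H^{\floor{s}}(\mathbb R^{2n})$, is exactly what guarantees the $\floor{s}$ integrations by parts are meaningful and that $\mathfrak M_\beta(S)$, $\mathfrak N(S)$ are well-defined (the $\partial^\beta S$ with $|\beta|\le\floor{s}$ lie in $L^2$).

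The main obstacle is the careful tensor-index bookkeeping when $\floor{s}\ge 1$: one must verify that all the contractions of $A$ (a $(\floor{s}+1)$-tensor) against $\alpha$ and against the $\floor{s}$-tensors $\nabla^{\floor{s}}u$, $\nabla^{\floor{s}}v$ are associative in the way Lemma \ref{prodlem} asserts, and that after the repeated integrations by parts in $x$ and in $y$ every boundary term vanishes (this uses \ref{assumption2}, $\spt(A)\subset\Omega\times\Omega$, so everything is compactly supported, and the integrability from \ref{assumption1} and \ref{assumption4}). A secondary subtlety is justifying the interchange of integration order (Fubini) and the passage to the weak formulation with general $u\in H^s$ rather than smooth $u$ — this is where the embeddings $H^s\times L^{2p}\hookrightarrow L^2$ and the finiteness estimates established just before the lemma are invoked. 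Once the identity is established for $u,v\in C^\infty_c$, density of $C^\infty_c$ in $H^s$ together with the continuity of all operators involved (already recorded above) extends it to all $u\in H^s$ in the weak sense, which is the assertion.
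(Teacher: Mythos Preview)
Your proposal is correct and follows essentially the same approach as the paper: expand $\langle (-\Delta)^s_A u,v\rangle$ into four terms, identify the first and last immediately, and rewrite the two cross terms via $S=A\cdot\alpha$ and integration by parts to produce the local operators $\mathfrak M_\beta$ and the nonlocal kernel $\mathfrak N$. The only cosmetic difference is that the paper applies Lemma~\ref{antisymlem} twice on $\langle \nabla^s v, Au\rangle$ to collect all $v$-derivatives at the single point $x$ before integrating by parts (thereby avoiding your variable relabeling step), whereas you split directly and integrate by parts in $x$ and $y$ separately; the resulting formulas for $\mathfrak M_\beta$ and $\mathfrak N$ coincide.
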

 
\begin{proof}
If $v\in H^s$, then in weak sense \begin{equation}\label{maglap1} \langle(-\Delta)^s_A u, v\rangle = \langle\nabla^s u, \nabla^sv\rangle + \langle \nabla^s u, Av \rangle + \langle \nabla^s v, Au \rangle + \left\langle Au ,Av\right\rangle\;, \end{equation} \noindent where all the terms on the right hand side are finite, as observed above. 

\para{Step 1} Let us start by computing the third term on the right hand side of \eqref{maglap1}. The bivariate function $\nabla^s v(x,y) [A(x,y)u(x)]_a$ is antisymmetric, and by Cauchy-Schwartz and formula~\eqref{symest} we have $\|\nabla^s v\, (Au)_a\|_{L^1} \leq \|\nabla^s v\|_{L^2} \|(Au)_a\|_{L^2} \leq \|v\|_{H^s} \|Au\|_{L^2}< \infty$. Therefore Lemma \ref{antisymlem} gives $\langle \nabla^s v, (Au)_a \rangle =0$, and we can use Lemma \ref{prodlem} to write \begin{align}\notag \langle \nabla^s v, Au \rangle & = \langle \nabla^s v, Au \rangle - \langle \nabla^s v, (Au)_a \rangle = \langle \nabla^s v, (Au)_s \rangle \\ & \notag = \langle (\nabla^{\floor{s}} v(x)- \nabla^{\floor{s}} v(y)) \otimes \alpha, (Au)_s \rangle \\ & \label{maglap2} = \langle \nabla^{\floor{s}} v(x)- \nabla^{\floor{s}} v(y), (Au)_s\cdot \alpha \rangle \\ & \notag = \langle \nabla^{\floor{s}} v(x)- \nabla^{\floor{s}} v(y), (A\cdot\alpha u)_a \rangle \\ & \notag = \langle \nabla^{\floor{s}} v(x)- \nabla^{\floor{s}} v(y), (S u)_a \rangle\;.\end{align}

\noindent The bivariate function $[\nabla^{\floor{s}} v(x)+ \nabla^{\floor{s}} v(y)][S(x,y)u(x)]_a$ is antisymmetric, and we can estimate its $L^1$ norm by means of the triangle inequality as $$ \|(\nabla^{\floor{s}} v(x)+ \nabla^{\floor{s}} v(y))(Su)_a\|_{L^1} \leq  \|(\nabla^{\floor{s}} v(x)- \nabla^{\floor{s}} v(y))(Su)_a\|_{L^1} +  \|2\nabla^{\floor{s}} v(x)(Su)_a\|_{L^1}\;.$$

\noindent The first term on the right hand side equals $\|\nabla^s v\, (Au)_s\|_{L^1}$ by computation \eqref{maglap2}, so that it is finite by $\|\nabla^s v\, (Au)_s\|_{L^1} \leq \|\nabla^s v\|_{L^2} \|(Au)_s\|_{L^2} \leq \|v\|_{H^s} \|Au\|_{L^2}< \infty$. We estimate the other term again by triangular inequality as
\begin{equation}\label{maglap3} \|2\nabla^{\floor{s}} v(x)(Su)_a\|_{L^1} \leq \|\nabla^{\floor{s}} v(x) S(x,y)u(x)\|_{L^1} + \|\nabla^{\floor{s}} v(x)S(y,x)u(y)\|_{L^1}\;.\end{equation}
\noindent The estimation of the second term on the right hand side of \eqref{maglap3} can be done as follows, and similarly for the other one:
\begin{align}
\|\nabla^{\floor{s}} v(x)S(y,x)u(y)\|_{L^1} & = \int_{\mathbb R^n} |\nabla^{\floor{s}} v(x)| \int_{\mathbb R^n} |S(y,x)|\,|u(y)|\, dydx \notag\\ & \leq \int_{\mathbb R^n} |\nabla^{\floor{s}} v(x)|\tilde S (x) \int_{\Omega} |u(y)|\, dydx \notag\\ & \leq c \|u\|_{L^2}\int_{\mathbb R^n} |\nabla^{\floor{s}} v(x)|\tilde S (x) dx \label{maglap5}\\ & \leq c \|u\|_{L^2} \|\nabla^{\floor{s}} v(x)\|_{L^2} \|\tilde S\|_{L^2} \notag\\ & \leq c \|u\|_{H^s} \|v\|_{H^s} \|\tilde S\|_{L^{2}} < \infty,
\end{align}
where the constant $c$ can change from line to line and does not depend on $v, u$ and $S$.

\noindent Thus we have proved that $\|2\nabla^{\floor{s}} v(x)(Su)_a\|_{L^1} < \infty$, which in turn implies that $\|(\nabla^{\floor{s}} v(x)+ \nabla^{\floor{s}} v(y))(Su)_a\|_{L^1} < \infty$. Now we can use again Lemma \ref{antisymlem} to conclude that $\langle\nabla^{\floor{s}} v(x)+ \nabla^{\floor{s}} v(y), (Su)_a \rangle=0$. From this fact and formula~\eqref{maglap2}, integrating by parts,
\begin{align*}
\langle \nabla^s v, Au \rangle & = \langle \nabla^{\floor{s}} v(x)- \nabla^{\floor{s}} v(y), (S u)_a \rangle + \langle\nabla^{\floor{s}} v(x)+ \nabla^{\floor{s}} v(y), (Su)_a \rangle \\ & = 2\langle \nabla^{\floor{s}} v(x), (S u)_a \rangle = \langle \nabla^{\floor{s}}v(x) , S(x,y) u(x) - S(y,x) u(y) \rangle \\ & = \langle \nabla^{\floor{s}}v(x) , S(x,y) u(x) \rangle - \langle \nabla^{\floor{s}}v(x) , S(y,x) u(y) \rangle \\ &   = (-1)^{\floor{s}} \left\langle v , (\nabla\cdot)_x^{\floor{s}}\left( u(x) \int_{\mathbb R^n} S(x,y) dy \right) \right\rangle \\ & \;\;\;\; - (-1)^{\floor{s}} \left\langle v , (\nabla\cdot)_x^{\floor{s}} \int_{\mathbb R^n} S(y,x)u(y) dy \right\rangle\;.
\end{align*}

\noindent In the last term the derivatives can pass under the integral sign by means of the dominated convergence theorem, since $|S(x,y)u(y)| \leq \tilde S(y)|u(y)|$, and $\int_{\mathbb R^n} \tilde S(y)|u(y)| dy \leq \|\tilde S\|_{L^2}\|u\|_{L^2} < \infty$. Eventually, 
\begin{align}\label{maglap4} \langle \nabla^s v, Au \rangle & = (-1)^{\floor{s}} \left\langle v , (\nabla\cdot)_x^{\floor{s}}\left( u(x) \int_{\mathbb R^n} S(x,y) dy \right) \right\rangle \\ & \;\;\;\; + (-1)^{\floor{s}+1} \left\langle v , \int_{\mathbb R^n} u(y) (\nabla\cdot)_x^{\floor{s}} S(y,x) dy \right\rangle\;. \notag\end{align}

\para{Step 2} Next we compute the second term on the right hand side of \eqref{maglap1}. With a computation similar to \eqref{maglap2}, we obtain $\langle \nabla^s u, Av \rangle = \langle \nabla^{\floor{s}}u(x) - \nabla^{\floor{s}}u(y) , S(x,y) v(x) \rangle$; moreover, we have estimates similar to the ones in \eqref{maglap5}, and so we can split the integral. Eventually, we integrate by parts and get
\begin{align}
\langle \nabla^s u, Av \rangle & = \langle \nabla^{\floor{s}}u(x), S(x,y) v(x) \rangle - \langle \nabla^{\floor{s}}u(y) , S(x,y) v(x) \rangle \notag\\ & = \left\langle v(x), \nabla^{\floor{s}}u(x) \cdot \int_{\mathbb R^n} S(x,y) dy \right\rangle - \left\langle v(x), \int_{\mathbb R^n}  \nabla^{\floor{s}}u(y) \cdot S(x,y) dy \right\rangle \label{maglap6}\\ & = \left\langle v(x), \nabla^{\floor{s}}u(x) \cdot \int_{\mathbb R^n} S(x,y) dy \right\rangle + (-1)^{\floor{s}+1} \left\langle v(x), \int_{\mathbb R^n}  u(y) (\nabla\cdot)_y^{\floor{s}} S(x,y) dy \right\rangle\;.\notag
\end{align}

\para{Step 3} The properties $\langle (-\Delta)^s u, v\rangle = \langle \nabla^s u, \nabla^s v \rangle$ and $\langle Au, Av \rangle = \left\langle v, u \int_{\mathbb R^n} |A(x,y)|^2dy\right\rangle$ hold, as proved in \cite{CO-magnetic-fractional-schrodinger}. Using this information and formulas~\eqref{maglap4}, \eqref{maglap6} we can write the fractional magnetic Schr\"odinger operator as
\begin{align*}
&\langle (-\Delta)^s u, v\rangle + \left\langle \nabla^{\floor{s}}u(x) \cdot \int_{\mathbb R^n}S(x,y)dy + (-1)^{\floor{s}}(\nabla\cdot)_x^{\floor{s}}\left( u(x)\int_{\mathbb R^n}S(x,y)dy\right) ,v\right\rangle + \\ & + (-1)^{\floor{s}+1} \left\langle \int_{\mathbb R^n} u(y) \left( (\nabla\cdot)_x^{\floor{s}} S(y,x) + (\nabla\cdot)_y^{\floor{s}} S(x,y) \right) dy  ,v\right\rangle + \left\langle u \int_{\mathbb R^n} |A|^2 dy ,v\right\rangle.
\end{align*}

\noindent Let us compute the left hand side of the second bracket and collect the resulting terms according to the order of their derivatives of $u$. For every multi-index $\beta$ such that $|\beta|\leq \floor{s}$ we can find a linear operator $\mathfrak M_\beta$ such that $$ \nabla^{\floor{s}}u(x) \cdot \int_{\mathbb R^n}S(x,y)dy + (-1)^{\floor{s}}(\nabla\cdot)_x^{\floor{s}}\left( u(x)\int_{\mathbb R^n}S(x,y)dy\right) = \sum_{|\beta|\leq\floor{s}}\partial^\beta u(x) \mathfrak M_\beta(S)\;. $$ \noindent We can also define the following linear operator: $$ \mathfrak N(S) = (-1)^{\floor{s}+1}\left((\nabla\cdot)_x^{\floor{s}} S(y,x) + (\nabla\cdot)_y^{\floor{s}} S(x,y)\right)\;. $$ \noindent With these new definitions, we can rewrite the fractional magnetic Schr\"odinger operator as in the statement of the Lemma. \qedhere
\end{proof}

\subsection{The bilinear form and the DN map}

For every $s\in \mathbb R^+\setminus \mathbb Z$ and $u,v\in H^s$ we define the bilinear form $B^s_{A,q} : H^s \times H^s \rightarrow \mathbb R$ as in \cite{CO-magnetic-fractional-schrodinger}: \begin{equation*}
B^s_{A,q} (u,v) = \int_{\mathbb R^n}\int_{\mathbb R^n} \nabla^s_A u \cdot \nabla^s_A v \,dy dx + \int_{\mathbb R^n} quv \,dx\;.
\end{equation*}

\begin{lemma}
There are constants $\mu', k' > 0$ such that, for all $u\in H^s$, $$ B^s_{A,q} (u,u) + \mu' \langle u,u \rangle \geq k'\|u\|^2_{H^s}\;. $$
\end{lemma}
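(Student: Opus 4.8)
The plan is to expand the bilinear form, keep the genuinely coercive piece coming from the fractional gradient, and absorb everything else into $\varepsilon\|u\|_{H^s}^2+C_\varepsilon\|u\|^2_{L^2}$ using the subcritical Sobolev embeddings guaranteed by the standing hypothesis $p>\max\{1,n/2s\}$ together with assumptions \ref{assumption1} and \ref{assumption3}.

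First I would record two facts. From the computation leading to \eqref{L2Hsest} one has $\langle\nabla^s u,\nabla^s u\rangle=\|(-\Delta)^{s/2}u\|_{L^2}^2$, and on the Fourier side $(1+|\xi|^2)^s\leq C_s(1+|\xi|^{2s})$ gives $\|u\|_{H^s}^2\leq C_s\bigl(\|(-\Delta)^{s/2}u\|_{L^2}^2+\|u\|^2_{L^2}\bigr)$. Since $\nabla^s_A u=\nabla^s u+Au$ pointwise and $\|a+b\|^2\geq(1-\delta)\|a\|^2-(\delta^{-1}-1)\|b\|^2$ for any $\delta\in(0,1)$, with $\delta=\tfrac12$ we get
\begin{equation*}
B^s_{A,q}(u,u)=\|\nabla^s_A u\|_{L^2}^2+\langle qu,u\rangle\geq \tfrac12\|(-\Delta)^{s/2}u\|_{L^2}^2-\|Au\|_{L^2}^2-|\langle qu,u\rangle|.
\end{equation*}
Thus the task reduces to showing that $\|Au\|_{L^2}^2$ and $|\langle qu,u\rangle|$ are each $\leq\varepsilon\|u\|_{H^s}^2+C_\varepsilon\|u\|^2_{L^2}$ with $\varepsilon>0$ as small as we like.

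For this, set $p'=p/(p-1)$. By Hölder, $\|Au\|_{L^2}=\|u\,\mathcal J_2 A\|_{L^2}\leq\|\mathcal J_2 A\|_{L^{2p}}\|u\|_{L^{2p'}}$ and $|\langle qu,u\rangle|\leq\|q\|_{L^p}\|u\|^2_{L^{2p'}}$, so it all comes down to $\|u\|_{L^{2p'}}$. Here is the point where the hypothesis is used: with $s_0:=n/(2p)$ one has $0\leq s_0<s$ precisely because $p>\max\{1,n/2s\}$, and $2p'=2n/(n-2s_0)$, so $H^{s_0}(\R^n)\hookrightarrow L^{2p'}(\R^n)$ and hence $\|u\|_{L^{2p'}}\leq C\|u\|_{H^{s_0}}$. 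Interpolating $H^{s_0}$ between $L^2$ and $H^s$ gives $\|u\|_{H^{s_0}}\leq\|u\|_{L^2}^{1-\theta}\|u\|_{H^s}^{\theta}$ with $\theta=s_0/s=n/(2ps)<1$, and Young's inequality turns this into $\|u\|_{H^{s_0}}\leq\varepsilon\|u\|_{H^s}+C_\varepsilon\|u\|_{L^2}$. Squaring and inserting into the Hölder estimates produces the claimed bounds for $\|Au\|_{L^2}^2$ and $|\langle qu,u\rangle|$, with constants depending only on $\|\mathcal J_2A\|_{L^{2p}}$, $\|q\|_{L^p}$, $n$, $s$, $p$.

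Finally I would combine: choosing $\varepsilon$ small relative to $C_s$ (and to the factor $\tfrac12$ left on the gradient term), the $\|u\|_{H^s}^2$ contributions are absorbed into a fixed fraction of $\|(-\Delta)^{s/2}u\|_{L^2}^2\geq C_s^{-1}\|u\|_{H^s}^2-\|u\|_{L^2}^2$, yielding $B^s_{A,q}(u,u)\geq k'\|u\|_{H^s}^2-\mu'\langle u,u\rangle$ with $k',\mu'>0$; rearranging gives the statement. The only genuine ingredient — and the main thing to get right — is the production of the small parameter $\varepsilon$ from the strict inequality $p>\max\{1,n/2s\}$ via the subcritical embedding and interpolation; the indefinite cross term $\langle\nabla^s u,Au\rangle$ has to be dispatched first, but this is routine once one is content to give up a fixed fraction of the coercive term. (Assumptions \ref{assumption4}, \ref{assumption5} on $S$ and the support condition \ref{assumption2} are not needed for this lemma.)
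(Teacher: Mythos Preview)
Your proof is correct and takes a genuinely different route from the paper's. The paper expands $B^s_{A,q}(u,u)=\langle(-\Delta)^s u,u\rangle+2\langle\nabla^s u,Au\rangle+\langle Qu,u\rangle$ with $Q=q+\int|A|^2\,dy$, keeps the cross term $2\langle\nabla^s u,Au\rangle$ explicitly, and estimates $\|\int\nabla^s u\cdot A\,dy\|_{L^2}$ via Cauchy--Schwarz together with assumption~\ref{assumption4} (the pointwise bound $|S|\leq\tilde S$) and the support condition~\ref{assumption2}; it then shows $Q\in Z_0^{-s}(\mathbb R^n)$ and invokes the already-established non-magnetic coercivity from Lemma~\ref{lemma:schrodingerexistenceofsolutions}. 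You instead kill the cross term at the outset with the elementary inequality $\|a+b\|^2\geq\tfrac12\|a\|^2-\|b\|^2$, and then produce the small parameter directly from the strict inequality $s_0=n/(2p)<s$ via the critical embedding $H^{s_0}\hookrightarrow L^{2p'}$ and interpolation between $L^2$ and $H^s$. Your argument is more self-contained (no appeal to the $Z_0^{-s}$ machinery or to Lemma~\ref{lemma:schrodingerexistenceofsolutions}) and, as you correctly note, needs only \ref{assumption1} and \ref{assumption3}; the paper's route, by contrast, also uses \ref{assumption2} and \ref{assumption4} for this lemma but has the advantage of reusing the non-magnetic framework already in place.
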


\begin{proof} 
\noindent The formula we want to prove is called \emph{coercivity estimate}. Using \eqref{maglap1}, we can write
\begin{align}
B^s_{A,q} (u,u) & =  \int_{\mathbb R^n}\int_{\mathbb R^n} \nabla^s_A u \cdot \nabla^s_A u \,dy dx + \int_{\mathbb R^n} qu^2 \,dx \notag\\ & = \int_{\mathbb R^n} u(-\Delta)^s_A u \,dx + \int_{\mathbb R^n} qu^2 \,dx = \langle (-\Delta)^s_A u, u \rangle + \langle qu,u\rangle \notag\\ & = \langle(-\Delta)^s u, u\rangle + 2\langle \nabla^s u, Au \rangle + \left\langle \left(q + \int_{\mathbb R^n}|A(x,y)|^2 dy\right)u,u\right\rangle \notag\\ & = \langle(-\Delta)^s u, u\rangle + 2\left\langle \int_{\mathbb R^n}\nabla^s u\cdot A \,dy, u \right\rangle + \left\langle Qu,u\right\rangle\label{bilin} \;,
\end{align}
\noindent where $Q(x) := q(x) + \int_{\mathbb R^n}|A(x,y)|^2 dy$ belongs to $L^p$ since Cauchy-Schwartz and assumptions \ref{assumption1} and \ref{assumption3} imply the embedding $L^{2p}\times L^{2p}\hookrightarrow L^p$. Since we always have $L^{p}\times H^s \hookrightarrow H^{-s}$, we get $\left\langle Qu,u\right\rangle \leq \|u\|_{H^s}\|Qu\|_{H^{-s}} \leq \|Q\|_{L^{p}}\|u\|^2_{H^s}$. For the second term on the right hand side of \eqref{bilin} we first perform an estimate by means of the Young inequality
\begin{align*}
2\left\langle \int_{\mathbb R^n}\nabla^s u\cdot A\, dy, u \right\rangle \leq \epsilon^{-1} \|u\|^2_{L^2} + \epsilon \left\| \int_{\mathbb R^n} \nabla^s u \cdot A \, dy \right\|^2_{L^2}\;,
\end{align*}
\noindent then estimate the second term with the Cauchy-Schwartz inequality, in light of \ref{assumption4}:
\begin{align*}
\epsilon \left\| \int_{\mathbb R^n} \nabla^s u \cdot A \, dy \right\|^2_{L^2} & = \epsilon \left\| \int_{\mathbb R^n} \left((\nabla^{\floor{s}} u(x) - \nabla^{\floor{s}} u(y))\otimes \alpha \right)\cdot A \, dy \right\|^2_{L^2} \\ & = \epsilon \left\| \int_{\mathbb R^n} (\nabla^{\floor{s}} u(x) - \nabla^{\floor{s}} u(y))\cdot (A\cdot\alpha) \, dy \right\|^2_{L^2} \\ & = \epsilon \left\| \int_{\Omega} (\nabla^{\floor{s}} u(x) - \nabla^{\floor{s}} u(y))\cdot S(x,y) \, dy \right\|^2_{L^2(\Omega)} \\ & \leq \epsilon \left\| \left(\int_{\Omega} |\nabla^{\floor{s}} u(x) - \nabla^{\floor{s}} u(y)|^2 dy\right)^{1/2} \left( \int_{\Omega} |S(x,y)|^2 \, dy\right)^{1/2} \right\|^2_{L^2(\Omega)} \\ & = \epsilon \int_{\Omega} \left(\int_{\Omega} |\nabla^{\floor{s}} u(x) - \nabla^{\floor{s}} u(y)|^2 dy\,\int_{\Omega} |S(x,y)|^2 \, dy\right)dx \\ & \leq  \epsilon \int_{\Omega} \left(\int_{\Omega} (|\nabla^{\floor{s}} u(x)|+ |\nabla^{\floor{s}} u(y)|)^2 dy\,\int_{\Omega} \tilde S^2(y) \, dy\right)dx \\ & = \epsilon \|\tilde S\|^2_{L^2(\Omega)} \int_{\Omega}\int_{\Omega} (|\nabla^{\floor{s}} u(x)|+ |\nabla^{\floor{s}} u(y)|)^2 dydx \\ & \leq 2 \epsilon \|\tilde S\|^2_{L^2(\Omega)} \int_{\Omega}\int_{\Omega} (|\nabla^{\floor{s}} u(x)|^2+ |\nabla^{\floor{s}} u(y)|^2) dydx \\ & \leq 4 |\Omega| \epsilon \|\tilde S\|^2_{L^2(\Omega)} \|\nabla^{\floor{s}} u\|^2_{L^2} \leq c\, \epsilon  \|\nabla^{\floor{s}} u\|^2_{H^{s'}} \leq c\, \epsilon  \|u\|^2_{H^s},
\end{align*}
where the constant $c$ can change from line to line and does not depend on $u$.

\noindent Eventually 
$$2\left\langle \int_{\mathbb R^n}\nabla^s u\cdot A\, dy, u \right\rangle \leq \epsilon^{-1} \|u\|^2_{L^2} + c\, \epsilon  \|u\|^2_{H^s},$$ which leads to \begin{equation}\label{precoerc}B^s_{A,q} (u,u) \geq B^s_{0,Q}(u,u) - \epsilon^{-1} \|u\|^2_{L^2} - c\, \epsilon  \|u\|^2_{H^s}\;.\end{equation}
\noindent Since $C^{\infty}_c(\Omega)$ is dense in $ L^p(\Omega)$, for every $\delta>0$ we can find functions $Q_s, Q_r$ such that $ Q_s \in C^{\infty}_c(\Omega)$, $\|Q_r\|_{L^p(\Omega)}\leq \delta$ and $Q=Q_s+Q_r$. Also, if $\phi_j\in C^{\infty}_c(\Omega)$ and $\|\phi_j\|_{H^s} = 1$ for $j=1,2$, then $|\langle Q_r \phi_1, \phi_2 \rangle| \leq c \|\phi_1\|_{H^s} \|\phi_2\|_{H^s} \|Q_r\|_{L^p} \leq c\delta$ by the embedding $L^p\times H^s \hookrightarrow H^{-s}$. Therefore, $$ \|Q_r\|_{Z^{-s}} = \sup_{\|\phi_j\|_{H^s}=1} \{ |\langle Q_r \phi_1, \phi_2 \rangle| \} \leq c\delta \;,$$ \noindent which means that $Q$ belongs to the closure of $C^\infty_c(\Omega)$ in $Z^{-s}(\mathbb R^n)$, that is $Q\in Z^{-s}_0(\mathbb R^n)$. Now by Lemma \ref{lemma:schrodingerexistenceofsolutions} we know the coercivity estimate for the non-magnetic high exponent case; this lets us write \eqref{precoerc} as $$B^s_{A,q} (u,u) +  (\mu+\epsilon^{-1}) \langle u,u \rangle \geq (k-c\, \epsilon)\|u\|^2_{H^s}\;,$$
\noindent which is the coercivity estimate for $B^s_{A,q}$ as soon as $\epsilon$ is fixed small enough and $\mu' := \mu+\epsilon^{-1}$, $k' := k-c\, \epsilon$ are defined.  \qedhere
\end{proof}

By means of the lemma above, if we assume $0$ is not an eigenvalue for the equation, we can proceed as in the proof of Lemma 2.6 from \cite{RS-fractional-calderon-low-regularity-stability} and get the well-posedness of the direct problem for FMSE. This can be stated as follows: if $F \in (\widetilde H^s(\Omega))^*$, there exists unique solution $u \in H^s(\R^\dimens)$ to $B^s_{A,q}(u,v)=F(v)$ for all $v\in \widetilde H^s(\Omega)$, i.e. unique $u\in H^s(\R^\dimens)$ such that $(-\Delta)^s_A u +qu =F$ in $\Omega$, $u|_{\Omega_e}=0$. This is also true for non-vanishing exterior value $f\in H^s(\R^\dimens)$ (see \cite{Co18} and \cite{GSU-calderon-problem-fractional-schrodinger}), and the following estimate holds:   
\begin{equation}\label{estdirect}\|u\|_{H^s(\mathbb R^n)} \leq c(\|F\|_{(\widetilde H^s(\Omega))^*} + \|f\|_{H^s(\mathbb R^n)}),\end{equation}
where $c$ does not depend on $F$ and $f$.

\noindent One can prove (see Lemma 3.11 from \cite{CO-magnetic-fractional-schrodinger}) that $B^s_{A,q}$ also enjoys these properties:

\begin{enumerate}
\item $B^s_{A,q}(v,w) = B^s_{A,q}(w,v)\;$, for all $v,w\in H^s$, 
\item $|B^s_{A,q}(v,w)| \leq c\|v\|_{H^s(\mathbb R^n)}\|w\|_{H^s(\mathbb R^n)}\;$ for all $v,w\in H^s$, where $c$ does not depend on $v$ and $w$.
\item $B^s_{A,q}(u_1,e_2) = B^s_{A,q}(u_2,e_1)\;$, for $u_j\in H^s$ solution to the direct problem for FMSE with exterior value $f_j\in H^s(\Omega_e)$ and $e_j$ any extension of $f_j$ to $H^s$, $j=1,2$.
\end{enumerate} 

\begin{lemma}\label{DNLem}
Let $X=H^s(\mathbb R^n)/\widetilde H^s(\Omega)$ be the abstract quotient space, and let $u_1\in H^s$ be the solution to the direct problem for FMSE with exterior value $f_1\in H^s(\Omega_e)$. Then $$\langle \Lambda_{A,q}^s [f_1],[f_2] \rangle = B^s_{A,q}(u_1,f_2), \;\;\;\;\;\;\; f_j\in H^s,\; j=1,2$$ \noindent defines a bounded, linear, self-adjoint map $\Lambda_{A,q}^s : X\rightarrow X^*$. We call $\Lambda_{A,q}^s$ the \emph{DN map}. 
\end{lemma}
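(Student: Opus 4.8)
The plan is to mirror the argument for the non-magnetic DN map in Lemma \ref{lemma:dnmap}, now relying on the three properties (1)--(3) of $B^s_{A,q}$ recorded above together with the well-posedness estimate \eqref{estdirect}. First I would check that the formula $\langle \Lambda_{A,q}^s [f_1],[f_2]\rangle := B^s_{A,q}(u_1,f_2)$ is independent of the chosen representatives. If $f_2-f_2'\in\widetilde H^s(\Omega)$, then $B^s_{A,q}(u_1,f_2)-B^s_{A,q}(u_1,f_2')=B^s_{A,q}(u_1,f_2-f_2')=0$, since $u_1$ solves the direct problem, i.e. $B^s_{A,q}(u_1,v)=0$ for every $v\in\widetilde H^s(\Omega)$. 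If instead $f_1-f_1'\in\widetilde H^s(\Omega)$, then $u_1-f_1'=(u_1-f_1)-(f_1'-f_1)\in\widetilde H^s(\Omega)$, so $u_1$ is also the unique solution with exterior value $f_1'$; hence the value $B^s_{A,q}(u_1,f_2)$ does not change. In particular the solution operator $f_1\mapsto u_1$ factors through the quotient $X$. Linearity of $\Lambda_{A,q}^s$ then follows from the bilinearity of $B^s_{A,q}$ and the linearity of this solution operator, which is itself a consequence of uniqueness of solutions to the direct problem.

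For boundedness I would combine property (2) with the estimate \eqref{estdirect}. Property (2) gives $|B^s_{A,q}(u_1,f_2')|\leq c\|u_1\|_{H^s}\|f_2'\|_{H^s}$ for every representative $f_2'$ of $[f_2]$; taking the infimum over such $f_2'$ yields $|\langle \Lambda_{A,q}^s[f_1],[f_2]\rangle|\leq c\|u_1\|_{H^s}\|[f_2]\|_X$. Next, for any representative $f_1'$ of $[f_1]$ the function $u_1$ also solves the direct problem with exterior value $f_1'$ and zero interior source, so \eqref{estdirect} with $F=0$ gives $\|u_1\|_{H^s}\leq c\|f_1'\|_{H^s}$; passing to the infimum, $\|u_1\|_{H^s}\leq c\|[f_1]\|_X$. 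Combining these two bounds, $|\langle \Lambda_{A,q}^s[f_1],[f_2]\rangle|\leq c\|[f_1]\|_X\|[f_2]\|_X$, which is exactly boundedness of $\Lambda_{A,q}^s\colon X\to X^*$.

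Finally, for self-adjointness I would invoke property (3) with the extensions $e_1=f_1$ and $e_2=f_2$: writing $u_2$ for the solution with exterior value $f_2$, property (3) gives $B^s_{A,q}(u_1,f_2)=B^s_{A,q}(u_2,f_1)$, i.e. $\langle \Lambda_{A,q}^s[f_1],[f_2]\rangle=\langle \Lambda_{A,q}^s[f_2],[f_1]\rangle$; here property (1) is what makes (3) symmetric in the first place, so no extra computation is needed. I do not anticipate a genuine obstacle: the only points requiring care are that the solution operator is well defined on $X$ (so that $u_1$ depends only on $[f_1]$) and that the estimates are taken over all representatives before passing to the quotient norm. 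Both are handled by uniqueness of solutions and \eqref{estdirect}, and the whole argument is essentially identical to Lemma \ref{lemma:dnmap}, the difference being only that $B^s_{A,q}$ carries the magnetic terms whose boundedness and symmetry have already been established.
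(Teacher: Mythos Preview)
Your proposal is correct and follows exactly the route indicated by the paper: the paper's proof is the single sentence ``The proof follows trivially from properties (1)--(3) of $B_{A,q}^s$ and \eqref{estdirect}'', and what you have written is precisely the unpacking of that sentence, using (1) for symmetry, (2) together with \eqref{estdirect} for boundedness, (3) for self-adjointness, and the solution property/uniqueness for well-definedness on the quotient.
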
  

\begin{proof} The proof follows trivially from properties (1)-(3) of $B_{A, q}^s$ and \eqref{estdirect}. \qedhere \end{proof}

\subsection{The gauge}

\noindent Consider two couples of potentials $(A_1, q_1)$ and $(A_2, q_2)$. We say that $(A_1, q_1)\sim(A_2, q_2)$ if and only if the following conditions are met:
\begin{itemize}
\item $\mathfrak N(S_1-S_2)=0$ for almost every $x,y\in\mathbb R^n$
\item $\mathfrak M_{(0,...,0)}(S_1-S_2) + \int_{\mathbb R^n} (|A_1|^2-|A_2|^2) dy + (q_1-q_2) =0$ for almost every $x\in\mathbb R^n$
\item $\mathfrak M_\beta(S_1-S_2)=0$ for all $1\leq|\beta|\leq\floor{s}$ and almost every $x\in\mathbb R^n.$
\end{itemize}

\noindent It is clear from the linearity of $\mathfrak N$ and $\mathfrak M_\alpha$ that $\sim$ is an equivalence relation, and so the set of all couples of potentials is divided into equivalence classes by $\sim$. We call these \emph{gauge classes}, and if $(A_1, q_1)\sim(A_2, q_2)$ we say that $(A_1, q_1)$ and $(A_2, q_2)$ \emph{are in gauge}.

\noindent Observe that this gauge $\sim$ coincides with the one defined in \cite{CO-magnetic-fractional-schrodinger} if $s\in(0,1)$, although it looks quite different. Since in this case $\floor{s}=0$, there is no third condition. In the language of that paper, the first condition reads
\begin{align*}
0 & = -\mathfrak N(S_1-S_2) = S_1(y,x) + S_1(x,y) - S_2(y,x) - S_2(x,y) \\ & = (A_1(x,y)-A_2(x,y))\cdot\alpha(x,y) + (A_1(y,x)-A_2(y,x))\cdot\alpha(y,x)  \\ & = (A_1(x,y)-A_1(y,x) - A_2(x,y)+A_2(y,x))\cdot\alpha(x,y) \\ & = 2(A_1-A_2)_a\cdot \alpha = 2 (A_1-A_2)_{a\parallel}\cdot\alpha \;, 
\end{align*}
\noindent which is equivalent to $(A_1)_{a\parallel} = (A_2)_{a\parallel}$, since the two vectors in the last scalar product have the same direction. Given this fact, for any $v\in H^s$ the first term in the second condition weakly is
\begin{align*}
\langle\mathfrak M_{(0,...,0)}&(S_1-S_2), v \rangle = 2 \langle S_1-S_2, v\rangle = 2 \langle \alpha\cdot(A_1-A_2), v\rangle = 2 \langle \alpha\cdot(A_1-A_2)_\parallel, v\rangle \\ & = 2 \langle \alpha\cdot(A_1-A_2)_{s\parallel}, v\rangle = 2 \langle \alpha v, (A_1-A_2)_{s\parallel}\rangle = 2 \langle (\alpha v)_s, (A_1-A_2)_{s\parallel}\rangle \\ & = \langle \alpha(x,y) v(x) + \alpha(y,x) v(y), (A_1-A_2)_{s\parallel}\rangle \\ & = \langle \alpha(x,y)( v(x) -v(y)), (A_1-A_2)_{s\parallel}\rangle \\ & = \langle \nabla^s v, (A_1-A_2)_{s\parallel}\rangle = \langle v, (\nabla\cdot)^s((A_1-A_2)_{s\parallel}) \rangle\;,
\end{align*}
\noindent which lets us rewrite the second condition as $$(\nabla\cdot)^s(A_1)_{s\parallel} + \int_{\mathbb R^n} |A_1|^2 dy + q_1 = (\nabla\cdot)^s(A_2)_{s\parallel} + \int_{\mathbb R^n} |A_2|^2 dy + q_2\;.$$

\begin{remark}
Observe that the gauge enjoyed by the FMSE is quite different from the one holding for the MSE. For the sake of simplicity, we shall compare the classical case with the fractional one in the regime $s\in(0,1)$, following section 3 in \cite{CO-magnetic-fractional-schrodinger}.

Given lemma \ref{expanded}, one sees that the following is an equivalent definition for the gauge $\sim$ above:
$$ (A_1,q_1)\sim(A_2,q_2) \quad \Leftrightarrow \quad (-\Delta)^s_{A_1} u + q_1 u = (-\Delta)^s_{A_2} u + q_2 u\;,$$

\noindent for all $u\in H^s(\mathbb R^n)$. One may also define the accessory gauge $\approx$ as
$$ (A_1,q_1)\approx(A_2,q_2) \quad \Leftrightarrow \quad \exists\phi\in G : (-\Delta)^s_{A_1} (u\phi) + q_1 u\phi = \phi((-\Delta)^s_{A_2} u + q_2 u)\;,$$

\noindent for all $u\in H^s(\mathbb R^n)$, where $G:=\{ \phi\in C^\infty(\mathbb R^n) : \phi > 0, \phi|_{\Omega_e}=1 \}$. These definitions can be extended to the MSE in the natural way. It was proved in lemmas 3.9 and 3.10 of \cite{CO-magnetic-fractional-schrodinger} that the FMSE enjoys the gauge $\sim$, but not $\approx$. In the same discussion, it was argued that the opposite holds for MSE. The reason for this surprising discrepancy should be looked for in the nonlocal structure of the FMSE. As apparent in formula (10) in \cite{CO-magnetic-fractional-schrodinger}, the coefficient of the gradient term in FMSE is not related to the whole vector potential $A$ itself, but only to its antisymmetric part $A_a$. It is such antisymmetry requirement what eventually does not allow the FMSE to enjoy $\approx$ as the MSE. As a result, the scalar potential $q$ can not be in general uniquely determined as in the classical case.  
\end{remark}

\subsection{Main result} \begin{remark}\label{WUCPrem} Assume $W \subseteq \Omega_e$ is an open set and $u\in H^s$ satisfies $u=0$ and $(-\Delta)^s_A u + qu = 0$ in $W$. We say that the fractional magnetic Schr\"odinger operator enjoys the weak unique continuation property (WUCP) if we can deduce that $u=0$ in $\Omega$. This was proved in \cite{CO-magnetic-fractional-schrodinger} by using the UCP of the fractional Laplacian for $s\in(0,1)$; since we know by Theorem \ref{thm:uniquecontinuationoffractionallaplacian} that UCP still holds for $(-\Delta)^s$ in the regime $s\in \mathbb R^+\setminus \mathbb Z$, we can deduce WUCP for $(-\Delta)^s_A + q$ by the same proof. \end{remark}    

\begin{proof}[Proof of theorem \ref{thm:FMSE}.]
\para{Step 1} Without loss of generality, let $W_1 \cap W_2 = \emptyset$. Let $f_i \in C^\infty_c(W_i)$, and let $u_i \in H^s(\mathbb R^n)$ solve $(-\Delta)^s_{A_i} u_i + q_i u_i = 0$ with $u_i - f_i \in \widetilde H^s(\Omega)$ for $i=1,2$. Knowing that the DN maps computed on $f\in C^\infty_c(W_1)$ coincide when restricted to $W_2$, using Lemmas \ref{expanded} and \ref{DNLem} we write this integral identity
\begin{align*}
0 & = \langle (\Lambda^s_{A_1,q_1} - \Lambda^s_{A_2,q_2}) f_1, f_2 \rangle = B^s_{A_1,q_1}(u_1,u_2) - B^s_{A_2,q_2}(u_1,u_2) \\ & = \left\langle u_2, \sum_{|\beta|\leq\floor{s}}\partial^\beta u_1 \mathfrak M_\beta(S_1-S_2)\right\rangle + \left\langle u_2, \int_{\mathbb R^n} u_1(y) \mathfrak N(S_1-S_2)\,dy \right\rangle + \\ & \;\;\;\;\;\; +  \left\langle u_2, u_1 \left( \int_{\mathbb R^n} (|A_1|^2-|A_2|^2) dy + (q_1-q_2)\right)\right\rangle.
\end{align*}

\noindent Since if $x\not\in\Omega$ or $y\not\in\Omega$ we have $A_1(x,y) = A_2(x,y)$ and $q_1(x)=q_2(x)$, we can restrict $u_1$, $u_2$ and $\partial^\beta u_1$ over $\Omega$ in the previous formula; it is also true that $(\partial^\beta u_1)|_\Omega = \partial^\beta (u_1|_\Omega)$, and therefore  
\begin{align*}
0 & = \left\langle u_2|_\Omega, \sum_{|\beta|\leq\floor{s}}\partial^\beta (u_1|_\Omega) \mathfrak M_\beta(S_1-S_2)\right\rangle + \left\langle u_2|_{\Omega}, \int_{\mathbb R^n} u_1|_{\Omega}(y) \mathfrak N(S_1-S_2)\,dy \right\rangle + \\ & \;\;\;\;\;\; +  \left\langle u_2|_{\Omega}, u_1|_{\Omega} \left( \int_{\mathbb R^n} (|A_1|^2-|A_2|^2) dy + (q_1-q_2)\right)\right\rangle.
\end{align*}
\noindent This is the Alessandrini identity, which now we will test with certain solutions in order to obtain information about the potentials. The appropriate test solutions will be produced by means of the Runge approximation property (RAP), which holds for the FMSE because of Remark \ref{WUCPrem} and Lemma 3.15 in \cite{CO-magnetic-fractional-schrodinger}. This property says that the set $\mathcal R = \{ u_f|_{\Omega}: f \in C^\infty_c(W) \} \subset L^2(\Omega)$ of the restrictions to $\Omega$ of those functions $u_f$ solving FMSE for some smooth exterior value $f$ supported in $W$ is dense in $L^2(\Omega)$.

\para{Step 2} Given any $f\in L^2(\Omega)$, by the RAP we can find a sequence of solutions $(u_2)_k \rightarrow f$ in $L^2$ sense as $k\rightarrow\infty$. Substituting these in the Alessandrini identity and taking limits, by the arbitrarity of $f$ we can deduce that
\begin{align*}
0 & = \sum_{|\beta|\leq\floor{s}} \partial^\beta (u_1|_{\Omega}) \mathfrak M_\beta(S_1-S_2)  + \int_{\mathbb R^n} u_1|_{\Omega}(y) \mathfrak N(S_1-S_2)\,dy + \\ & \;\;\;\;\;\; +  u_1|_{\Omega} \left( \int_{\mathbb R^n} (|A_1|^2-|A_2|^2) dy + (q_1-q_2)\right)
\end{align*}
\noindent holds for every solution $u_1\in H^s$ and almost every point $x\in\Omega$. Fix $x\in\Omega$. Consider now any $\psi\in C^\infty_c(\Omega)$ and let $g(y):= e^{-1/|x-y|}\psi(y)$, $g(x)=0$. Since $e^{-1/|x-y|}$ is smooth, it is easy to see that $g\in C^\infty_c(\Omega) \subset L^2(\Omega)$; also, by the properties of $e^{-1/|x-y|}$ one has that $\partial^\beta g(x)=0$ for all multi-indices $\beta$. By the RAP we can find a sequence of solutions $(u_1)_k \rightarrow g$ in $L^2$ sense as $k\rightarrow\infty$. Substituting these in the above identity and taking limits, we get 
$$  \int_{\mathbb R^n} e^{-1/|x-y|}\psi(y) \mathfrak N(S_1-S_2)\,dy=0 \;,$$
\noindent which by the arbitrarity of $\psi$ and the positivity of the exponential now implies $\mathfrak N(S_1-S_2)=0$ for almost all $x,y\in\Omega$. We can now return to the above equation with this new information: for every solution $u_1\in H^s$ and almost every $x\in\Omega$, 
\begin{align*}
0 & = \sum_{|\beta|\leq\floor{s}} \partial^\beta (u_1|_{\Omega}) \mathfrak M_\beta(S_1-S_2) +  u_1|_{\Omega} \left( \int_{\mathbb R^n} (|A_1|^2-|A_2|^2) dy + (q_1-q_2)\right)\;.
\end{align*}
\noindent For every multi-index $\beta$ we can consider the function $h_\beta (x)= x^\beta=x_1^{\beta_1}\dotso x_n^{\beta_n}$, which belongs to $L^2(\Omega)$. Let $(h_\beta)_k$ be a sequence of solutions approximating $h_\beta$ in $L^2$, which exists by the RAP. We will first substitute $(h_{(0,...,0)})_k$ into the last formula, take limits and deduce
$$ \mathfrak M_{(0,...,0)}(S_1-S_2) + \int_{\mathbb R^n} (|A_1|^2-|A_2|^2) dy + (q_1-q_2) =0 \;,$$
which has the effect of reducing the equation to 
$$ \sum_{1\leq|\beta|\leq\floor{s}} \partial^\beta (u_1|_{\Omega}) \mathfrak M_\beta(S_1-S_2)=0.$$ If $\floor{s}\geq 1$, we will repeat the last steps with each $h_\beta$ such that $|\beta|=1$, deducing $\mathfrak M_\beta(S_1-S_2)=0$ for every such $\beta$, and subsequently $$\sum_{2\leq|\beta|\leq\floor{s}} \partial^\beta (u_1|_{\Omega}) \mathfrak M_\beta(S_1-S_2)=0.$$ Repeating this process for a total of $\floor{s}$ times eventually leads to $$\mathfrak M_\beta(S_1-S_2)=0 \quad\forall \ 1\leq|\beta|\leq\floor{s}\;,$$
\noindent which proves the theorem by the definition of the gauge $\sim$.
\end{proof}

\section{Possible generalizations and applications beyond this article} 
\label{sec:possiblegeneralizations}

We discuss some possible directions for the future research on higher order fractional inverse problems, fractional Poincar\'e inequalities and unique continuation properties. It seems that now it would be the most natural to reconsider many of the recent developments in fractional inverse problems for higher order operators. We outline here some problems which we would like to see solved in the future.

We have split this section in three in order to emphasize some open problems which we find especially interesting. We do not claim that answers to all questions are positive and it would be interesting to see why and where the greatest difficulties, or even counterexamples, would show up. We first list the most natural directions to continue our work on higher order fractional Calder\'on problems. One could study for example the following cases: \begin{enumerate}[(i)]
    \item Is reconstruction from a single measurement \cite{Co18,GRSU-fractional-calderon-single-measurement} possible also in the higher order cases?
    \item Is there stability \cite{RS-fractional-calderon-low-regularity-stability} in the higher order cases?
    \item Is there exponential instability \cite{RS18} in the higher order cases?
    \item Is there uniqueness for the Calder\'on problem for fractional semilinear Schrödinger equations \cite{LL19, LL-fractional-semilinear-problems} in the higher order cases?
    \item Do the monotonicity methods \cite{HL19a,HL19b} extend to the higher order cases?
    \item Is there uniqueness for the conductivity type fractional Calder\'on problems \cite{CLL19,Co18} in the higher order cases?
    \item Could recent results on fractional heat equations \cite{LLR19,ruland2019quantitative} be generalized to the higher order cases?
    \item Does the higher regularity Runge approximation in \cite{CLR18, GSU-calderon-problem-fractional-schrodinger} generalize to higher order cases?
\end{enumerate}

\subsection{Unique continuation problems}

We state here some unique continuation problems, which do not follow directly from the earlier results and the techniques that we have developed for this article.

\begin{question}[UCP for Bessel potentials]\label{prob:besselUCP} Let $s \in \R^+ \setminus \Z$, $p \in [1,\infty)$ and $r \in \R$. Let $V \subset \R^n$ be an open set. Suppose that $f \in H^{r,p}(\R^n)$, $f|_V = 0$ and $\fraclaplace f|_V = 0$. Show that $f \equiv 0$ or give a counterexample.
\end{question}

The positive answer to question \ref{prob:besselUCP} is known when $p \in [1,2]$ (see corollary \ref{cor:besselucp}). If $f$ has compact support, then the answer is positive for all $p\in [1, \infty)$ (see corollary \ref{cor:stronguniquecontinuation}). Question \ref{prob:besselUCP} is also open for the exponents $s \in (0,1)$ when $p>2$. See section \ref{subsec:uniquecontinuationresults} for details.

\begin{question}[Measurable UCP]\label{prob:measUCP} Let $s \in \R^+ \setminus \Z$ and $r \in \R$. Let $V \subset \R^n$ be an open set and $E \subset V$ a measurable set with positive measure. Suppose that $f \in H^{r}(\R^n)$, $f|_E = 0$ and $\fraclaplace f|_V = 0$. Show that $f \equiv 0$ or give a counterexample.
\end{question}

The positive answer to question \ref{prob:measUCP} is known when $s \in (0,1)$ \cite{GRSU-fractional-calderon-single-measurement}. Question \ref{prob:measUCP} with a potential $q$ from a suitable class of functions is also an interesting and more challenging problem. See \cite[Proposition 5.1]{GRSU-fractional-calderon-single-measurement} for more details.

\begin{question}[Alternative strong UCP]\label{prob:strongUCP}
Let $s\in\R^+\setminus\Z$ and $r\in\R$. Let $V\subset\R^\dimens$ be an open set. Suppose that $f\in H^{r}(\R^\dimens)$, $f|_V=0$ and $\partial^{\beta}(\fraclaplace f)(x_0)=0$ for some $x_0\in V$ and all $\beta\in \N_0^\dimens$. Show that $f\equiv 0$ or give a counterexample.
\end{question}

Question \ref{prob:strongUCP} can be seen as a version of the strong unique continuation property (see e.g. \cite{FF-unique-continuation-fractional-ellliptic-equations, GR-fractional-laplacian-strong-unique-continuation, RU-unique-continuation-scrodinger-rough-potentials}) with interchanged decay conditions. When $f$ has compact support, the answer to question \ref{prob:strongUCP} is positive for $s\in(-\dimens/2, \infty)\setminus\Z$ (see corollary \ref{cor:stronguniquecontinuation}).

The problems posed in questions \ref{prob:besselUCP}--\ref{prob:strongUCP} for the fractional Laplacian are interesting mathematical problems on their own right, but they also have important applications in inverse problems. The UCPs can be used to show Runge approximation properties for nonlocal equations such as the fractional Schr\"odinger equation (see theorem \ref{thm:schrodingerrungeapproximation}), which in turn can be used to show uniqueness for the corresponding nonlocal inverse problem (see theorem \ref{thm:schrodingeruniqueness}). The UCPs have also applications in integral geometry, where the uniqueness of the ROI problem for the $d$-plane transform can be reduced to a unique continuation problem for the fractional Laplacian (see remark \ref{remark:ucpofdnormaloperator} and corollaries \ref{cor:uniquecontinuationnormaloperator} and \ref{cor:partialdataresult}).

%\footnote{KM: Yet unfinished.}Question \ref{prob:strongUCP} is of course interesting on its own as a mathematical problem, but it is also related to the uniqueness of the ROI problem in integral geometry. The normal operator of the $d$-plane transform $\nod=\dplane
%^*\dplane$ can be seen as a fractional Laplacian with negative exponent $\nod\approx (-\Delta)^{-d/2}$ (see section \ref{sec:applicationstointegralgeometry}). The value $\dplane^*(\dplane f)(x)$ is obtained by integrating the function $\dplane f$ over all $d$-planes containing the point $x$. Hence the ROI problem related to question \ref{prob:strongUCP} is: if the values of $f$ are known in $V$ ($f|_V=0$) and $\dplane
%^*\dplane f$ vanishes to infinite order at $x_0\in V$, can one uniquely determine the values of $f$ everywhere from this data? 

\subsection{Fractional Poincar\'e inequality for $L^p$-norms}
In section \ref{subsec:poinacareinequality} we prove the fractional Poincar\'e inequality for $L^2$-norms in multiple ways. The inequality is needed for the well-posedness of the inverse problem for the fractional Schr\"odinger equation. One could try to extend the Poincar\'e inequality for general $L^p$-norms. This suggests the following natural question which is also interesting from the pure mathematical point of view.

\begin{question}
\label{prob:poincare}
Let $s\geq 0$, $1\leq p<\infty$, $K\subset\R^\dimens$ compact set and $u\in H^{s, p}(\R^\dimens)$ such that $\spt(u)\subset K$. Show that there exists a constant $c=c(n, K, s, p)$ such that
\begin{equation}
\label{eq:poincarebessel}
\aabs{u}_{L^p(\R^\dimens)}\leq c\aabs{(-\Delta)^{s/2}u}_{L^p(\R^\dimens)}
\end{equation}
or give a counterexample.
\end{question}

Since we have presented several proofs for the Poincar\'e inequality in the case $p=2$, one could try some of our methods to solve question \ref{prob:poincare}. However,  some of our proofs are heavily based on Fourier analysis and those approaches might be difficult to generalize to the~$L^p$-case when $p\neq 2$. Like in theorem~\ref{thm:generalpoincare} and in theorem~\ref{thm:poincareinterpolation}, another interesting question is whether one can replace~$u$ in the left-hand side of equation~\eqref{eq:poincarebessel} with $(-\Delta)^{t/2}u$ when $0\leq t\leq s$, and whether the constant~$c$ in equation \eqref{eq:poincarebessel} can be expressed in terms of the classical Poincar\'e constant when $s\geq 1$.

\subsection{The Calder\'on problem for determining a higher order PDO}

In this discussion, we try to make as simple assumptions as possible. The whole point is to introduce a new inverse problem that we think is a very natural and interesting one, at least from a pure mathematical point of view. Therefore the optimal regularity in the statement of the problem is not as important. Let $\Omega$ be a domain with smooth boundary. Suppose that $P(x,D) = \sum_{\abs{\alpha}\leq m} a_\alpha(x) D^\alpha$ is a partial differential operator (PDO) of order $m$ with smooth coefficients on $\Omega$. We argue in section \ref{subsec:uniquecontinuationresults} that the operator $(-\Delta)^s + P(x,D)$ admits the UCP (in open sets).

It is shown in the seminal work of Ghosh, Uhlmann and Salo \cite{GSU-calderon-problem-fractional-schrodinger} that if $P(x,D)$ is of order $m = 0$, then one can determine the zeroth order coefficient (i.e. the potential $q$) from the associated DN map. It was then later shown in \cite{CLR18} that if $P(x,D)$ is of order $m = 1$, then one can also determine the coefficients (i.e. the potential $q$ and the magnetic drift $b$) from the associated DN map whenever the order of $\fraclaplace$ is large enough, namely when $2s > 1$. This and our work on higher order Calder\'on problems motivate the following inverse problem.

\begin{question} Suppose that $\Omega \subset \R^n$ is a bounded open domain with smooth boundary. Let $P_j(x,D)$, $j = 1,2$, be smooth PDOs of order $m \in \N$ in $\Omega$. Let $s\in\R^+\setminus\Z$ be such that $2s > m$. Given any two open sets $W_1,W_2 \subset \Omega_e$, suppose that the DN maps $\Lambda_{P_i}$ for the equations
\[(\fraclaplace + P_j(x,D))u_j = 0 \;\;\mbox{in}\; \Omega\]
satisfy $\Lambda_{P_1}f|_{W_2}=\Lambda_{P_2}f|_{W_2}$ for all $f \in C_c^\infty(W_1)$. Show that $P_1(x,D) = P_2(x,D)$ or give a counterexample.
\end{question}

Another interesting question is whether the strong UCP \cite{GR-fractional-laplacian-strong-unique-continuation} can be extended to higher order PDOs.

\bibliography{refs} 
\bibliographystyle{abbrv}

\end{document}